\documentclass[11pt]{article}
\usepackage[top=1in, bottom=1in, left=1in, right=1in]{geometry}
\usepackage{srcltx,graphicx}
\usepackage{amsmath, amssymb, amsthm}
\usepackage{color, xcolor, colortbl}
\usepackage{algorithm}
\usepackage{bbm}
\usepackage{algpseudocode}
\usepackage{url}
\usepackage[capitalize,nameinlink]{cleveref} 
\crefname{problem}{Problem}{Problems}
\usepackage[justification=centering]{subfig}

\usepackage{pgfplots}
\pgfplotsset{
  colormap={CM}{rgb255=(255,255,255) rgb255=(201,201,201) },
  tick label style={font=\footnotesize},
  label style={font=\footnotesize},
}


\numberwithin{equation}{section}

\title{Maximizing robustness of point-set registration by leveraging non-convexity}
\date{}
\author{
	   Cindy Orozco Bohorquez\thanks{Institute for Computational and Mathematical Engineering, Stanford University, Stanford, CA 94305. Email: {\tt orozcocc@stanford.edu}}, \and
	Yuehaw Khoo\thanks{Department of Statistics, University of Chicago,Chicago, IL 60637. Email: {\tt ykhoo@uchicago.edu}},\and
			   Lexing Ying\thanks{Department of Mathematics and ICME, Stanford University,
			Stanford, CA 94305.		     Stanford, CA 94305.
			Email: {\tt lexing@stanford.edu}}}

\theoremstyle{plain}
\newtheorem{theorem}{Theorem}[section]
\newtheorem{lemma}{Lemma}[section]
\newtheorem{corollary}{Corollary}[section]

\theoremstyle{definition}
\newtheorem{definition}{Definition}[section]
\newtheorem{problem}{Problem}[section] 
\newcommand{\Unif}[0]{\text{Unif}}  
\newcommand{\E}[0]{\mathbb{E}}  
\newcommand{\minimize}{\mathop{\mathrm{minimize}}\displaylimits}  
\newcommand{\SO}[0]{\mathcal{SO}}  
  
\newcommand{\Tr}[0]{\text{Tr}}  

\begin{document}
\maketitle
\begin{abstract}
Point-set registration is a classical image processing problem that looks for the optimal
transformation between two sets of points. In this work, we analyze the impact of outliers when
finding the optimal rotation between two point clouds. The presence of outliers motivates the use of
least unsquared deviation, which is a non-smooth minimization problem over non-convex domain. We
compare approaches based on non-convex optimization over special orthogonal group and convex
relaxations. We show that if the fraction of outliers is larger than a certain threshold, any naive
convex relaxation fails to recover the ground truth rotation regardless of the sample size and
dimension. In contrast, minimizing the least unsquared deviation directly over the special
orthogonal group exactly recovers the ground truth rotation for any level of corruption as long as
the sample size is large enough. These theoretical findings are supported by numerical simulations.
\end{abstract}
\noindent{\bf Keywords:}
  {Least unsquared deviation, Robust point-set registration, Wahba's problem, Special
    orthogonal group, Non-convex optimization}

\section{Introduction}\label{sec:intro}


In this paper, we study the problem of aligning two point clouds where one of the point clouds is
subjected to gross corruption. More precisely, given a ground truth rotation $R_0\in\SO (d)$ and a
set of indices $\mathcal{C}\subseteq\{1,\dots,N\}$, we model the two point clouds $\{x_i\}_{i=1}^N,
\{y_i\}_{i=1}^N \subset \mathbb{R}^d$, where
\begin{align}
  x_i \sim \Unif(\mathbb{S}^{d-1})\ \text{for} \ 1 \leq i \leq N\label{eq:prob_model0}
\end{align}
independently, and
\begin{align}
  \begin{cases}
    y_i = R_0 x_i\ & \text{for } i \in \mathcal{C}^{c}\\ y_i \sim \Unif(\mathbb{S}^{d-1})& \text{for } i
    \in \mathcal{C}\ \text{independently}. \\
  \end{cases}\label{eq:prob_model}
\end{align}
Here $\Unif(\mathbb{S}^{d-1})$ denotes uniform distribution over $\mathbb{S}^{d-1}$, the unit sphere
in dimension $d$, $\mathcal{C}^{c}$ denotes the complement of $\mathcal{C}$,
$\mathcal{C}^{c}:=\{1,\dots,N\}\setminus\mathcal{C}$, and $\SO(d)$ denotes the special orthogonal
group
\begin{equation}
\SO(d) = \{R\in\mathbb{R}^{d\times d} \ \vert \ R^T R = I_d, \ \det{R} = 1\}.
\end{equation}
The model \eqref{eq:prob_model} implies the points $x_i$ for $i \in \mathcal{C}^c$ can be aligned
with $y_i$ from the same index set via applying some ground truth rotation $R_0$, while for $i \in
\mathcal{C}$, $y_i$ and $x_i$ are generated independent of each other (hence cannot be
aligned). Therefore $\mathcal{C}$ is the the index set of corrupted points, and we denote the
corruption level as
\begin{align}
p:=\frac{\vert \mathcal{C} \vert}{N}.
\end{align}
When $p=1$, the points in $\{y_i\}_{i=1}^N$ are all corrupted, while there is no corruption when
$p=0$. The goal of point-set registration is to recover the ground truth rotation $R_0$ given the
point clouds $\{x_i\}_{i=1}^N, \{y_i\}_{i=1}^N$ when the index set of corrupted points,
$\mathcal{C}$, is unknown, and the corruption level $p$ can take any value in $[0,1)$.

In order to limit the influence of outliers when determining the rotation, we minimize the least
unsquared deviation (LUD) \cite{wang_2013} defined to be
\begin{align}
  \label{eq:LUDdef}
  L(A; \{x_i\}_{i=1}^N, \{y_i\}_{i=1}^N) := \frac{1}{N}\sum_{i=1}^N \| A x_i - y_i\|_2,\quad
  A\in\mathbb{R}^{n\times n}.
\end{align}
Therefore it is natural to determine the ground truth rotation $R_0$ via minimizing the LUD. 
\begin{problem}\label{prob:LUD} 
  $\minimize L(R; \{x_i\}_{i=1}^N, \{y_i\}_{i=1}^N)$ such that $R\in \SO(d)$.
\end{problem}
This problem is however, non-convex, due to the domain $\SO(d)$.

\subsection{Previous approaches}

In $\mathbb{R}^{3}$, the point-registration problem without outliers can be formulated as the
Wahba's problem \cite{wahba_65},\cite{wahba2_66}, where the rotation $R_{0}$ is recovered via
solving the least squares (LS)
\begin{align}\label{eq:LS}
\min_{R\in \SO(d)} LS(R);\quad 
LS(R; \{x_i\}_{i=1}^N, \{y_i\}_{i=1}^N) := \frac{1}{N}\sum_{i=1}^N \| R  x_i - y_i\|_2^{2}.
\end{align}

The Wahba's problem is equivalent to the orthogonal Procrustes problem \cite{Schonemann_1966}, with
the additional constraint that the determinant of the solution equals to $+1$.  The solution of
\eqref{eq:LS} can be computed using the singular value decomposition (SVD)
\cite{Ruiter_2013},\cite{Markley_87}. Given that the solution of \eqref{eq:LS} is sensitive to
outliers, in computer vision \eqref{eq:LS} is reformulated as a Maximum Consensus problem. The most
common algorithm to solve the maximum consensus problem is random sample consensus (RANSAC)
\cite{RANSAC}. A survey of approximate and exact algorithms to solve the maximum consensus problem
can be found in \cite{Chin_2017}.

Another approach to deal with outliers is to change the loss function in \eqref{eq:LS} by Huber loss
\cite{Verboon1992}, LUD \cite{wang_2013} or truncated-least-squares deviation \cite{Yang_2019}. The
LUD has been proven to be more robust to outliers than \eqref{eq:LS} specifically in the context of
robust registration \cite{wang_2013}, camera location recovery \cite{Lerman_2018} and robust
subspace recovery \cite{Lerman_2019}. Observing the LUD is a convex function, one can obtain a
convex problem from \cref{prob:LUD} via applying a convex relaxation to the domain $\SO(d)$. Several
different kinds of relaxation can be applied. As a baseline, one can consider solving the
unconstrained problem:
\begin{problem}\label{prob:conv1}
  $\minimize L(A; \{x_i\}_{i=1}^N, \{y_i\}_{i=1}^N)$ such that $A\in\mathbb{R}^{d\times d}$.
\end{problem}
Compared with \cref{prob:LUD}, here the
rotation group constraint is removed. Further improvement can be obtained if we solve:
\begin{problem}\label{prob:conv2}
	$\minimize L(A; \{x_i\}_{i=1}^N, \{y_i\}_{i=1}^N)$ such that $A\in\mathrm{conv}\, \SO(d)$.
\end{problem}
Here the domain is relaxed to the convex hull of $\SO(d)$, $\mathrm{conv}\, \SO(d)$, which admits
characterization via positive semidefinite matrices of with size $2^{d-1}\times 2^{d-1}$
(exponential in $d$) \cite{Saunderson_2015}. This approach is used in \cite{Horowitz_14} when
optimizing over the convex hull of the special Euclidean group for two and three dimensions.
However, this approach assumes that the proportion of outliers is rather small. Similarly, in
\cite{Yang_2019}, a tight semidefinite program relaxation using quaternions is proposed to minimize
the truncated least squares deviation in $d=3$. The dimension of the domain of the semidefinite
relaxation depends quadratically in the sample size $N$.

On the other hand, recent years have seen many instances of non-convex optimization problems that
have a rather benign optimization landscape. In these examples, either all critical points are
saddle points and global optima, or the basin of convergence is large \cite{Sun}. Therefore, a
direct minimization via first order method, sometimes with the help of a cheap initialization, can
solve the non-convex problem with optimality guarantees. Such benign behavior has been seen in a
related problem of robust principal component analysis \cite{Lerman_2018,Lerman_2019}. As for our
problem, in \cite{Verboon1992}, an iterative reweighted least squares is proposed to solve
\cref{prob:LUD} with rather encouraging results. Thus we ask the natural question of whether
\cref{prob:LUD} admits a benign optimization landscape that allows the use of cheap first-order method
instead of a convex relaxation with exponential complexity in $d$.

\subsection{Our contributions}
The contributions of this paper are two-fold:
\begin{itemize}
\item Although convex relaxation yields a surrogate optimization problem where the global optima can
  always be achieved, when $p$ is large it is possible that the solutions of \cref{prob:conv1} and
  \cref{prob:conv2} do not coincide with $R_0$. In this paper, we prove when $p$ is sufficiently large
  ($p\approx 0.6$), optimizing LUD over any convex set that contains $\SO(d)$ does not recover the
  ground truth rotation $R_0$.
\item Motivated by such observation, we solve \cref{prob:LUD} using non-convex optimization to
  explicitly constrain the solution to be in $\SO(d)$. We prove that by minimizing LUD in
  $\SO(d)$ starting at almost any initialization point, one can always recover $R_0$ for any $p<1$,
  as long as the sample size $N$ is sufficiently large. This is yet another example where the use of
  non-convex optimization is superior than convex relaxation approaches.  phenomenon.
\end{itemize}

 \begin{figure}[htb]
 	\centering
 	\includegraphics{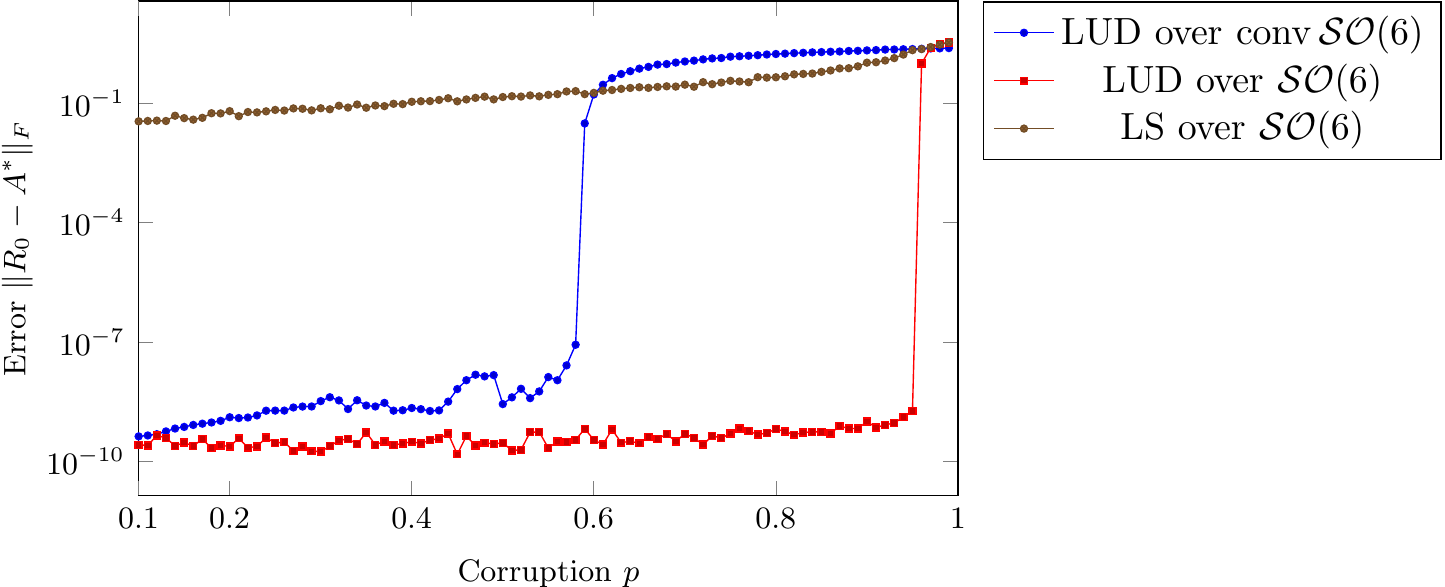}
 	\caption{Error for different methods. $A^{*}$ denotes the output of certain minimization
      procedure. Red: Solving \cref{prob:LUD}. Blue: Solving \cref{prob:conv2}. Brown: Solving
      the least-squares problem in \eqref{eq:LS}. }\label{fig:Puzzle}
 \end{figure}

A summary of our contributions can be found in \cref{fig:Puzzle}, where we compare the error for
minimizing LS in \eqref{eq:LS} and solving \cref{prob:LUD} and
\cref{prob:conv2} for dimension $d = 6$ and sample size $N=1024$. Details of numerical implementation
are provided in \cref{sec:Num.Sim.}. Although it can be shown that minimizing LS asymptotically
ensures $A^*\rightarrow R_0$ as $N\rightarrow\infty$, for finite sample size LS is sensitive to
outliers. In \cref{fig:Puzzle}, the empirical error between the ground truth rotation and the
solution of LS is larger than $10^{-2}$, even when the corruption level is as low as $p=0.1$. On the
contrary, minimizing LUD over either $\SO(6)$ or $\mathrm{conv}\,\SO(6)$ exactly recovers the ground
truth when the corruption level is small. The maximum admissible corruption level to recover the
ground truth is 0.6 if we optimize LUD over $\mathrm{conv}\, \SO(6)$, and $0.95$ if we optimize over
$\SO(6)$. More interestingly, these thresholds seem to be independent of the dimension.

\subsection{Summary of results}
All universal constants are denoted by the notation $c$, unless in situations where we explicitly
distinguish between different constants. Moreover we say an event happens \emph{with high
  probability}, when the probability of the event happening is larger than $1-c/N$ for some
universal constant $c$.

We start by characterizing the conditions under which the solution of convex relaxations of
\cref{prob:LUD} recovers the ground truth rotation. In particular we focus on convex relaxations of
the optimization domain $\SO(d)$, $\mathbb{R}^{d\times d}$, and $\mathrm{conv}\,\SO(d)$,
i.e. \cref{prob:conv1} and \cref{prob:conv2}, respectively.  We define the admissible corruption threshold 
\begin{align}
  \label{eq: definition p tilde}
  \tilde{p}(d) :=
  \left(1+\frac{B\left(d-1,\frac{1}{2}\right)}{B\left(\frac{d-1}{2},\frac{1}{2}\right)}\right)^{-1},
\end{align}
where $B(m,n)$ is the beta function
$B\left(m,n\right):=2\int_{0}^{\pi/2}\sin^{2n-1}\theta\cos^{2m-1}\theta d\theta$. $\tilde{p}(d)$
is a decreasing function of $d$, such that $\tilde{p}(3)=0.6$ and
$\tilde{p}(d)\rightarrow(1+1/\sqrt{2})^{-1}\approx0.5858$ as $d\rightarrow \infty$. We first
establish when it is possible to recover the ground truth rotation via the baseline approach
\cref{prob:conv1} in \cref{thm:sharpness convex}.
\begin{theorem}[Exact recovery using unconstrained optimization, \cref{prob:conv1}]\label{thm:sharpness convex}
	Given data $\{x_i\}_{i=1}^{N}$, $\{y_i\}_{i=1}^{N}$ sampled from the distribution
    \eqref{eq:prob_model0} and \eqref{eq:prob_model}, if the corruption level
	\begin{equation}
	\label{eq:possible_recovery} 
	p<\tilde{p}(d) - O\left(d\sqrt\frac{\log N}{N}\right),
	\end{equation}
then, the ground truth rotation $R_0$ is the unique minimizer of
$L(A;\{x_i\}_{i=1}^{N},\{y_i\}_{i=1}^{N})$ over $\mathbb{R}^{d\times d}$, with high probability.
\end{theorem}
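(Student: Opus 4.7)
The plan is to exploit the convexity of $L$ on $\mathbb{R}^{d\times d}$: $R_0$ is the unique minimizer if and only if its one-sided directional derivative
\begin{equation*}
D_H L(R_0) \;:=\; \lim_{t\downarrow 0}\frac{L(R_0+tH)-L(R_0)}{t}
\end{equation*}
is strictly positive for every nonzero $H\in\mathbb{R}^{d\times d}$. Because $y_i = R_0 x_i$ for $i\in\mathcal{C}^c$ while $R_0 x_i\neq y_i$ almost surely for $i\in\mathcal{C}$, splitting the sum gives
\begin{equation*}
D_H L(R_0) \;=\; \frac{1}{N}\sum_{i\in\mathcal{C}^{c}}\|H x_i\|_2 \;-\; \frac{1}{N}\sum_{i\in\mathcal{C}}\langle u_i,\,H x_i\rangle, \qquad u_i := \frac{y_i - R_0 x_i}{\|y_i - R_0 x_i\|_2}.
\end{equation*}
By positive homogeneity in $H$, it suffices to prove strict positivity on the Frobenius unit sphere $\{\|H\|_F=1\}$.

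Next we identify the threshold by computing $\mathbb{E}[D_H L(R_0)]$. Using $\mathbb{E}[xx^\top]=I_d/d$ and, for corrupted indices, the rotational symmetry of $y_i$ about $R_0 x_i$, a short spherical integration yields $\mathbb{E}[u_i\mid x_i] = -c_d R_0 x_i$ with $c_d := \mathbb{E}\sqrt{(1-y_1)/2}$ for $y\sim\Unif(\mathbb{S}^{d-1})$. Converting this integral via $t=\cos\alpha$ and the Legendre duplication formula produces exactly $c_d = B(d-1,\tfrac{1}{2})/B(\tfrac{d-1}{2},\tfrac{1}{2})$, so $\tilde p(d)=1/(1+c_d)$. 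Substituting back,
\begin{equation*}
\mathbb{E}[D_H L(R_0)] \;=\; (1-p)\,\mathbb{E}\|Hx\| + \frac{p c_d}{d}\,\mathrm{tr}(R_0^\top H),
\end{equation*}
and applying the bound to both $\pm H$ reduces positivity to $(1-p)\mathbb{E}\|Hx\| > (pc_d/d)|\mathrm{tr}(R_0^\top H)|$ for all $H\neq 0$. The key sharp inequality $|\mathrm{tr}(R_0^\top H)|\le d\,\mathbb{E}\|Hx\|$ then follows immediately from Cauchy--Schwarz inside an expectation: $\mathrm{tr}(R_0^\top H)/d = \mathbb{E}[x^\top R_0^\top Hx] = \mathbb{E}\langle R_0 x, Hx\rangle \le \mathbb{E}\|Hx\|$, with equality iff $Hx\parallel R_0 x$ almost surely, i.e.\ $H=cR_0$. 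Hence the binding direction is $H=R_0$, for which the condition collapses to $1-p > c_d p$, i.e.\ $p<\tilde p(d)$.

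The final step promotes this expectation inequality to a uniform high-probability statement. On the Frobenius unit sphere each summand $\|Hx_i\|$ and $\langle u_i,Hx_i\rangle$ is bounded by $\|H\|_{\mathrm{op}}\le 1$, and the map $H\mapsto D_H L(R_0)$ is $1$-Lipschitz in operator norm (hence also in Frobenius norm). We combine an $\epsilon$-net of the $d^2$-dimensional unit sphere of $\mathbb{R}^{d\times d}$ (cardinality $\le(3/\epsilon)^{d^2}$) with Hoeffding's inequality at each net point, a union bound, and Lipschitz extension with the choice $\epsilon=1/N$, to obtain $\sup_{\|H\|_F=1}|D_H L(R_0)-\mathbb{E}D_H L(R_0)|\lesssim d\sqrt{\log N/N}$ with high probability. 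Combined with the expectation computation, $D_H L(R_0)>0$ for every nonzero $H$ whenever $p<\tilde p(d)-O(d\sqrt{\log N/N})$, which is the claim. The principal obstacle is this uniform-in-$H$ concentration: the $d^2$-dimensional parameter space forces a $\sqrt{d^2}=d$ prefactor from the covering, and one must verify that the covering-plus-Lipschitz correction does not erode the sharp extremality of $H=R_0$ when passing from the population inequality to the empirical one.
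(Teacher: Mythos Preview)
Your proposal is correct and runs parallel to the paper's argument. The paper lower-bounds $L(A)-L(R_0)$ directly via the subgradient inequality $\|u+v\|-\|v\|\ge u^\top v/\|v\|$ (its Lemma~3.1), which is exactly the linearization you obtain by passing to the directional derivative $D_HL(R_0)$; for a convex function the two formulations are equivalent. The expectation step is also the same: the paper's key bound $\E\|Ex\|\ge\|E\|_*/d$ together with $|\mathrm{tr}(M)|\le\|M\|_*$ is precisely your Cauchy--Schwarz inequality $|\mathrm{tr}(R_0^\top H)|\le d\,\E\|Hx\|$, and both identify $H\propto -R_0$ as the binding direction yielding the threshold $p<\tilde p(d)$. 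The one genuine technical difference is the uniform concentration over $\{\|H\|_F=1\}$: the paper invokes Talagrand's comparison inequality, whereas you use an $\epsilon$-net plus Hoeffding and a Lipschitz extension. Both routes produce the same $O\bigl(d\sqrt{(d^2+\log N)/N}\bigr)$ deviation; your covering argument is more elementary, while Talagrand packages the chaining implicitly. One point worth making explicit in your write-up: the uniform lower bound $\E[D_HL(R_0)]\ge(\tilde p(d)-p)/(d\,\tilde p(d))$ on the Frobenius sphere carries a $1/d$ factor (from $\E\|Hx\|\ge\|H\|_*/d\ge 1/d$), and it is this factor combined with the $d$ from the covering that produces the $d$ (really $d\sqrt{d^2+\log N}$) prefactor in the final threshold.
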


One may wonder whether we can expect recovery for a larger corruption level $p$ than the bound in
\eqref{eq:possible_recovery} using a tighter relaxation than \cref{prob:conv1},
i.e. \cref{prob:conv2}. We show in \cref{thm:imp. recovery} that indeed the condition
\eqref{eq:possible_recovery} is necessary, for the recovery of ground truth rotation via any convex
relaxation.

\begin{theorem}[Recovery failure optimizing over $\mathrm{conv}\,\SO(d)$, \cref{prob:conv2}]\label{thm:imp. recovery}
  Given data $\{x_i\}_{i=1}^{N}$, $\{y_i\}_{i=1}^{N}$ sampled from the distribution
  \eqref{eq:prob_model0} and \eqref{eq:prob_model}, if the corruption level
  \begin{align}
	\label{eq:impossible_recovery} 
	p > \tilde{p}(d) + O\left(\sqrt{\frac{d\log N}{N}}\right),
  \end{align}
  then with high probability the ground truth rotation $R_0$ is not a minimizer of
  $L(A;\{x_i\}_{i=1}^{N},\{y_i\}_{i=1}^{N})$ over $\mathrm{conv}\, \SO(d)$. Moreover, for any convex
  set $\mathcal{P}\supseteq \SO(d)$, let $A_{\mathcal{P}}^*$ be a minimizer of
  $L(A;\{x_i\}_{i=1}^{N},\{y_i\}_{i=1}^{N})$ over $\mathcal{P}$, then
  \begin{align}
	\label{eq:impossible_recovery_distance} 
	\|A_{\mathcal{P}}^{*}-R_0\|_{F} > O\left(p- \tilde{p}(d) \right).
  \end{align}
\end{theorem}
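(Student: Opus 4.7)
The plan is to prove non-optimality of $R_0$ by exhibiting a feasible descent direction along which the one-sided derivative of the convex function $L$ is strictly negative, and then to convert this into a quantitative distance bound via Lipschitz continuity of $L$. Since $0\in\mathrm{conv}\,\SO(d)$ for $d\ge 2$ (it equals the Haar average of $\SO(d)$, or more concretely an average of finitely many $\pm 1$-diagonal rotations), the whole segment $\{(1-t)R_0:t\in[0,1]\}$ lies in $\mathrm{conv}\,\SO(d)\subseteq\mathcal{P}$, so $H=-R_0$ is a feasible direction from $R_0$ in any convex $\mathcal{P}\supseteq\SO(d)$. Direct computation of the one-sided directional derivative at $R_0$ yields
\begin{align*}
\partial_{-R_0}L(R_0) &= \frac{1}{N}\sum_{i\in\mathcal{C}^c}\|-R_0 x_i\| + \frac{1}{N}\sum_{i\in\mathcal{C}}\frac{\langle R_0 x_i-y_i,\,-R_0 x_i\rangle}{\|R_0 x_i-y_i\|} \\
&= (1-p) - \frac{1}{N}\sum_{i\in\mathcal{C}}\sin(\theta_i/2),
\end{align*}
where $\theta_i$ denotes the angle between $R_0 x_i$ and $y_i$; the corrupted terms simplify using $\|R_0 x_i-y_i\|=2\sin(\theta_i/2)$ and $\cos\theta_i-1=-2\sin^2(\theta_i/2)$.

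Next I would identify the law of the $\theta_i$. By rotation invariance of the uniform distribution on $\mathbb{S}^{d-1}$, for $i\in\mathcal{C}$ the angles $\theta_i$ are i.i.d.\ with density proportional to $\sin^{d-2}\theta$ on $[0,\pi]$. A substitution $\theta=2\phi$ expresses $\mathbb{E}[\sin(\theta/2)]$ as a ratio of two beta integrals and yields $\mathbb{E}[\sin(\theta/2)]=B(d-1,\tfrac12)/B(\tfrac{d-1}{2},\tfrac12)$, so that $1+\mathbb{E}[\sin(\theta/2)]=1/\tilde{p}(d)$. A concentration inequality (Hoeffding using $\sin(\theta_i/2)\in[0,1]$, or Bernstein exploiting that $\sin(\theta/2)$ concentrates near $\sqrt{2}/2$ with $O(1/d)$ variance in high dimension, which is where the $\sqrt{d}$ factor emerges) then yields $\big|\tfrac{1}{|\mathcal{C}|}\sum_{\mathcal{C}}\sin(\theta_i/2)-\mathbb{E}[\sin(\theta/2)]\big|=O\big(\sqrt{d\log N/N}\big)$ with high probability. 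Combining, under the hypothesis \eqref{eq:impossible_recovery} we obtain $\partial_{-R_0}L(R_0)<0$ with high probability, and by convexity of $L$ the point $R_0$ fails to minimize $L$ over any convex $\mathcal{P}\supseteq\SO(d)$.

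For the displacement lower bound \eqref{eq:impossible_recovery_distance}, I would test against the point $A_{t^*}=(1-t^*)R_0\in\mathcal{P}$ with $t^*$ chosen to minimize $\phi(t):=L((1-t)R_0)$ along the feasible segment. Since $L(A_{\mathcal{P}}^*)\le L(A_{t^*})$, the $1$-Lipschitz bound $|L(A)-L(B)|\le \tfrac{1}{N}\sum_i\|(A-B)x_i\|\le\|A-B\|_F$ gives $\|A_{\mathcal{P}}^*-R_0\|_F\ge L(R_0)-L(A_{\mathcal{P}}^*)\ge L(R_0)-L(A_{t^*})$, reducing the task to quantifying $L(R_0)-L(A_{t^*})$ through the derivative analysis above.

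The main obstacle I anticipate is recovering the \emph{linear} dependence on $p-\tilde{p}(d)$ in the displacement bound. A naive convex Taylor expansion of $\phi$ around $0$ gives $\phi'(0^+)=\Theta(p-\tilde{p}(d))$ together with a bounded right second derivative, so $\phi(0)-\phi(t^*)=\Theta\big((p-\tilde{p}(d))^2\big)$, and the chain above yields only $\|A_{\mathcal{P}}^*-R_0\|_F=\Omega\big((p-\tilde{p}(d))^2\big)$. Upgrading to linear will likely require a sharper argument---for example, a second-order expansion identifying $t^*=\Theta(p-\tilde{p}(d))$ combined with a rotational-symmetry argument that forces the global minimizer $A^*_{\mathcal{P}}$ to lie near $A_{t^*}$, or a direct KKT-type analysis at $A^*_{\mathcal{P}}$ relating its displacement to the magnitude of the subdifferential of $L$ at $R_0$.
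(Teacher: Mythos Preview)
Your approach for the first claim---exhibiting the feasible direction $H=-R_0$ and computing the one-sided directional derivative---is correct and is the infinitesimal version of what the paper does. The paper instead picks an explicit better point $(1-\lambda^*)R_0$ with $\lambda^*=\Theta\!\big((p-\tilde p(d))/p\big)$ and bounds $L((1-\lambda^*)R_0)-L(R_0)$ directly via a second-order Taylor-type inequality (their Lemma~3.1), obtaining a gap of order $\lambda^*(p-\tilde p(d))=\Theta\!\big((p-\tilde p(d))^2\big)$ after Hoeffding. Either route proves non-optimality of $R_0$.

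Your diagnosis of the obstacle in the distance bound is accurate, and your Lipschitz route $\|A^*_{\mathcal P}-R_0\|_F\ge L(R_0)-L(A^*_{\mathcal P})$ does indeed give only a quadratic bound. The paper avoids this entirely and does not go through $L(R_0)-L(A^*_{\mathcal P})$. Instead it reuses the machinery from the ``success'' direction (their Lemma~3.2 plus Talagrand/Lemma~3.1) to obtain a \emph{sharpened} uniform lower bound
\[
L(A)-L(R_0)\ \ge\ -\,\frac{\|A-R_0\|_F}{\sqrt d\,\tilde p(d)}\Big(p-\tilde p(d)+O(\cdot)\Big)
\]
valid simultaneously for all $A$. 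The crucial point is that the slope here is $\Theta(p-\tilde p(d))$, not $1$ as in the crude Lipschitz bound. Adding this to $L(R_0)-L((1-\lambda^*)R_0)\ge \frac{\lambda^*}{2\tilde p(d)}(p-\tilde p(d))-O(\cdot)$ shows that every $A$ with $\|A-R_0\|_F\le \sqrt d\,\lambda^*/4=\Theta(p-\tilde p(d))$ satisfies $L(A)>L((1-\lambda^*)R_0)\ge L(A^*_{\mathcal P})$, forcing the minimizer outside that ball. So the missing idea is not a KKT or symmetry argument but simply replacing the $1$-Lipschitz estimate by the $(p-\tilde p(d))$-Lipschitz lower bound that the proof of the recovery theorem already provides.
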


Finally, we characterize the efficacy of solving \cref{prob:LUD} for recovering the ground truth
$R_0$. To this end, we identify the conditions under which a gradient descent on the manifold
$\SO(d)$ converges to $R_0$. More precisely, we show that a dynamical system on $\SO(d)$ following a
minimizing gradient flow of the LUD cost converges in finite time to the ground truth rotation given
some mild assumptions on the initial point.

\begin{theorem}[Exact recovery in finite time optimizing over $\SO(d)$, \cref{prob:LUD}]
  \label{thm:converg. nonconvex}
  Let $\{x_i\}_{i=1}^{N},\{y_i\}_{i=1}^{N}$ be sampled from the distribution in
  \eqref{eq:prob_model0} and \eqref{eq:prob_model}, $\partial_{R}L(R)$ be the Riemannian
  generalized gradient of $L(R;\{x_i\}_{i=1}^{N},\{y_i\}_{i=1}^{N})$ at $R\in\SO(d)$. With high
  probability, all solutions of the dynamical system
  \begin{align}
    \frac{d R}{dt}(t) \in - \partial_{R}L(R),\label{eq:dyn_sysLUD}
  \end{align}
  with initial condition $R(0)$ such that
  \begin{align}\label{eq: in_R_dyn}
 	\|\log (R_{0}^\top R(0))\|_{2}<\pi-O\left(\left(\frac{d^3}{(1-p)^2}\frac{(\log(N)+d)}{N}\right)^{1/4}\right)
  \end{align}
  converge to $R_0$ in finite time, i.e. $R(t)=R_0$ for all $t\geq T(\|\log(R_{0}^\top R(0))\|_{2})$. Here
  \begin{align}
    \label{eq:conv rate}
    T(s) := \frac{d}{1-p}\left(\cosh^{-1}\left(\sec\left(\frac{s}{2}\right)\right)
    +c\left(\frac{d^3}{(1-p)^2}\frac{(\log(N)+d)}{N}\right)^{1/4}\right),\ s\in[0,\pi).
  \end{align}
\end{theorem}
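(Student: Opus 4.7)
The plan is to reduce the differential inclusion \eqref{eq:dyn_sysLUD} on $\SO(d)$ to a scalar differential inequality for the geodesic distance $\theta(t):=\|\log(R_{0}^{\top}R(t))\|_{2}$, control the sampling error uniformly in $R$, and then integrate the resulting ODE. First I would fix a selection from the Riemannian generalized gradient: at a smooth point the Euclidean subgradient of $\|Rx_{i}-y_{i}\|_{2}$ is $(Rx_{i}-y_{i})x_{i}^{\top}/\|Rx_{i}-y_{i}\|_{2}$, and the Riemannian version is obtained by projecting onto $T_{R}\SO(d)=R\,\Skew(d)$ via the map $A\mapsto R\,\Skew(R^{\top}A)$. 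Parametrizing $R=R_{0}\exp(\Omega)$ with $\Omega\in\Skew(d)$ and splitting the empirical sum into clean indices $i\in\mathcal{C}^{c}$ and outliers $i\in\mathcal{C}$ decomposes $\partial_{R}L$ into a clean part $G_{\text{clean}}$ and an outlier part $G_{\text{outlier}}$ which I would analyze separately.

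Next I would perform a population-level computation of each part. Because $y_{i}$ is uniform on the sphere and independent of $x_{i}$ in the outlier case, rotational symmetry forces $\mathbb{E}_{y_{i}}[(Rx_{i}-y_{i})/\|Rx_{i}-y_{i}\|_{2}\mid x_{i}]$ to be a scalar multiple of $Rx_{i}$, so $\mathbb{E}[G_{\text{outlier}}]$ is proportional to $R$ and is annihilated by the skew projection. For $G_{\text{clean}}$ we have $Rx_{i}-y_{i}=R_{0}(E-I)x_{i}$ with $E=\exp(\Omega)$, and block-diagonalizing $\Omega$ into rotation $2$-planes reduces the expectation to explicit spherical integrals; these should show that the projection of $\mathbb{E}[G_{\text{clean}}]$ onto the unit direction $\Omega/\|\Omega\|_{2}$ has magnitude at least $\tfrac{2(1-p)}{d}\cos(\theta/2)$. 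Combining the two contributions yields
\begin{equation*}
\frac{d\theta}{dt}\;\leq\;-\frac{2(1-p)}{d}\cos\!\left(\frac{\theta}{2}\right)+\varepsilon_{N,d},
\end{equation*}
where $\varepsilon_{N,d}$ absorbs the empirical-to-population fluctuation.

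To control $\varepsilon_{N,d}$, I would prove uniform concentration of $\partial_{R}L$ over $R\in\SO(d)$ via an $\epsilon$-net argument combined with Lipschitz continuity of $R\mapsto(Rx_{i}-y_{i})/\|Rx_{i}-y_{i}\|_{2}$ away from the measure-zero coincidence set and standard sub-Gaussian bounds for bounded random unit vectors. The hypothesis \eqref{eq: in_R_dyn} is chosen precisely so that $\cos(\theta(0)/2)$ strictly dominates the dimensionless error $\varepsilon_{N,d}d/(2(1-p))$, so the right-hand side stays negative throughout the trajectory; separating variables and using $\int_{0}^{s/2}\sec u\,du=\cosh^{-1}(\sec(s/2))$ then reproduces the bound $T(s)$ of \eqref{eq:conv rate}. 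For the \emph{finite-time} termination I would exploit the non-smoothness of $L$ at $R_{0}$: each clean residual $Rx_{i}-y_{i}$ vanishes exactly at $R=R_{0}$, so $\partial_{R}L(R_{0})$ is a Minkowski sum containing an open ball around the origin, giving $0\in-\partial_{R}L(R_{0})$, and the trajectory becomes stationary upon reaching $R_{0}$.

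The main obstacle will be obtaining the sharp $\cos(\theta/2)$ descent rate from the spherical expectations in the multi-plane case: when $\Omega$ has several nonzero angle blocks, one must argue that the descent remains aligned with the \emph{dominant} block so that $\theta=\|\log(R_{0}^{\top}R)\|_{2}$ itself decreases at the claimed rate and not merely some aggregate norm. A secondary challenge is the uniform concentration of a set-valued, non-smooth gradient, which requires a Filippov-style interpretation of \eqref{eq:dyn_sysLUD} so that any selection---not just a particular regularization---inherits the descent inequality and hence every solution of the inclusion converges.
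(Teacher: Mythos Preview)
Your proposal is correct and follows essentially the same route as the paper: reduce \eqref{eq:dyn_sysLUD} to a scalar inequality for $\theta(t)=\|\log(R_{0}^{\top}R(t))\|_{2}$, show the population Riemannian gradient paired with the dominant rank-$2$ skew direction yields the rate $\tfrac{2(1-p)}{d}\cos(\theta/2)$ (outlier part vanishing by rotational symmetry), control the fluctuation uniformly over $\SO(d)$ by an $\varepsilon$-net, and integrate $\int\sec$. Two points where the paper's execution may differ from what you envision: (i) the ``multi-plane'' obstacle is resolved not by an alignment argument but by a deterministic inequality---using the planar decomposition one bounds $\|Rx_{i}-x_{i}\|_{2}\le 2\sin(\theta/2)$ because every sub-dominant angle is $\le\theta$, which immediately gives the $\cos(\theta/2)$ lower bound for the pairing with $UA_{2}U^{\top}$; (ii) the uniform concentration of the outlier gradient cannot appeal to Lipschitz continuity ``away from the coincidence set'' alone, since taking a supremum over $R$ forces some $Rx_{i}$ arbitrarily close to $y_{i}$---the paper handles this by bounding, uniformly over the net, the cardinality of $\{i:\|Q_{l}x_{i}-y_{i}\|\le\delta\}$ via a Bernoulli tail bound and treating those indices crudely.
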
 

In \cref{thm:converg. nonconvex}, the dynamical system \eqref{eq:dyn_sysLUD} is a generalization of
the differential equation when the gradient is nonsmooth. Additionally, condition \eqref{eq:
  in_R_dyn} is equivalent to
\begin{align}
\label{eq: cond_p_non_cvx}
p < 1 - O\left(\frac{d}{(\pi-\|\log(R_{0}^\top R(0))\|_{2})^4}\sqrt{\frac{d(\log(N)+d)}{N}}\right).
\end{align}
Therefore, the maximum corruption ratio to ensure exact recovery solving \cref{prob:LUD} in finite time goes
to one as the sample size goes to infinity.

In the rest of the paper where there is no ambiguity, we simply write $L(M; \{x_i\}_{i=1}^N,
\{y_i\}_{i=1}^N)$ as $L(M)$, though it should be understood that $L(M)$ depends on the random
variables $\{x_i\}_{i=1}^N, \{y_i\}_{i=1}^N$. Because of the rotation invariance of the data
uniformly distributed on $\mathbb{S}^{d-1}$, without loss of generality, we also fix the ground
truth rotation $R_0=I$.

\subsection{Organization}
In \cref{sec:pre}, we present basic notations, definitions and theorems required for the rest of the
paper.  In \cref{sec: behavior cvx}, we study the properties of \cref{prob:conv1} and \cref{prob:conv2}
and prove \cref{thm:sharpness convex} and \cref{thm:imp. recovery}. In \cref{sec:fin time SOd}, we
tailor some results in dynamical system theory in preparation for proving
\cref{thm:converg. nonconvex}. In \cref{sec:non-convex recovery}, we show that with a proper
initialization, there is always exact recovery for any $p<1$ when considering the noise model
\eqref{eq:prob_model}, therefore proving \cref{thm:converg. nonconvex}. In \cref{sec:Num.Sim.}, we
support our theoretical findings via numerical simulations.

\section{Preliminaries}\label{sec:pre}
In this section, we introduce some background in concentration inequalities, dynamical systems,
Riemannian manifolds, and $\varepsilon$-nets. Additional background in random variables uniformly
distributed on $\mathbb{S}^{d-1}$ is given in \cref{sec:prop_sphere}.

\subsection{Concentration inequalities}

Before introducing the concentration inequalities used in this paper, we first introduce the
\textit{sub-gaussian} norm of a random variable $X$:
\begin{equation}
\| X \|_{\psi_2} := \inf_t \left\{t\ \bigg\vert \ \mathbb{E}\exp\left(\frac{X^2}{t^2}\right)\leq 2\right \}.
\end{equation}
Intuitively, this norm measures the spread of the distribution of a random variable $X$. If the
distribution of $X$ has a large tail, then $\| X \|_{\psi_2}$ has to be large to ensure
$\mathbb{E}\exp(X^2/ \|X\|_{\psi_2}^2)\leq 2$. Indeed, for a Gaussian variable $X$ with variance
$\sigma^2$, $\|X\|_{\psi_2}^2 = c \sigma^2$ for some constant $c>0$.

If $\|X\|_{\psi_2}$ is bounded, we say that $X$ is a sub-gaussian random variable.  In addition if
$\E X=0$, $X$ being sub-gaussian is equivalent to
\begin{equation}
\label{eq: MGF subgaussian}
\E\exp(\lambda X)\leq \exp(c\lambda^2\|X\|^2_{\psi_2})
\end{equation}
for some universal constant $c>0$.

For a sub-gaussian random variable, the following theorem holds.
\begin{theorem}[Hoeffding's inequality, \cite{vershynin_2018}] \label{theorem:hoeffding}
	Let $X_1,\ldots, X_M$ be independent sub-gaussian random variables. Then for every $t\geq 0$ 
	\begin{equation}
	\text{P}\left\{\left|\sum_{i=1}^M X_i-\E X_i\right| \geq t\right\} \leq \exp\left(
    \frac{-ct^2}{\sum_{i=1}^M \| X_i\|^2_{\psi_2}}\right)
	\end{equation}
	for some universal constant $c>0$.
\end{theorem}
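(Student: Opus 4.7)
The plan is to prove this by the classical Chernoff argument, leveraging the moment-generating-function characterization of sub-gaussianity given in (\ref{eq: MGF subgaussian}). The only subtlety beyond textbook Hoeffding is tracking the universal constants introduced via the $\psi_2$ norm.

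First I would reduce to the centered case. Writing $Y_i := X_i - \mathbb{E}X_i$, a standard centering lemma shows that $\|Y_i\|_{\psi_2} \leq c_0 \|X_i\|_{\psi_2}$ for an absolute constant $c_0$ (using that $\|\mathbb{E}X_i\|_{\psi_2} \leq \|X_i\|_{\psi_2}$ and the triangle inequality for $\|\cdot\|_{\psi_2}$). Thus (\ref{eq: MGF subgaussian}) applies to each $Y_i$ with $\mathbb{E}Y_i = 0$, yielding $\mathbb{E}\exp(\lambda Y_i) \leq \exp(c_1 \lambda^2 \|X_i\|_{\psi_2}^2)$ for a suitable absolute constant $c_1$.

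Next I would carry out the one-sided Chernoff bound. For any $\lambda > 0$, Markov's inequality gives
\begin{equation*}
\mathrm{P}\Bigl\{\sum_{i=1}^M Y_i \geq t\Bigr\} \leq e^{-\lambda t}\, \mathbb{E}\exp\Bigl(\lambda \sum_{i=1}^M Y_i\Bigr) = e^{-\lambda t}\prod_{i=1}^M \mathbb{E}\exp(\lambda Y_i) \leq \exp\Bigl(-\lambda t + c_1 \lambda^2 \sum_{i=1}^M \|X_i\|_{\psi_2}^2\Bigr),
\end{equation*}
where the equality uses independence. Setting $S := \sum_{i=1}^M \|X_i\|_{\psi_2}^2$ and optimizing over $\lambda$ by choosing $\lambda = t/(2c_1 S)$ yields
\begin{equation*}
\mathrm{P}\Bigl\{\sum_{i=1}^M Y_i \geq t\Bigr\} \leq \exp\Bigl(-\frac{t^2}{4 c_1 S}\Bigr).
\end{equation*}
Applying the same argument to $-Y_i$ (which satisfies the same $\psi_2$ bound) and taking a union bound over the two tails gives the claimed two-sided inequality with $c = 1/(8 c_1)$.

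The hard part is only conceptual, not technical: one must ensure that (\ref{eq: MGF subgaussian}) truly captures the relevant behavior after centering and that the absolute constants do not depend on $M$, on the individual $\|X_i\|_{\psi_2}$, or on the distributions of the $X_i$. Once the centering lemma and the MGF bound are in place, everything reduces to a one-variable calculus optimization in $\lambda$, which is routine. No additional probabilistic machinery (e.g.\ martingales or symmetrization) is required, since independence plus the MGF bound already suffice.
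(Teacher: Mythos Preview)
Your argument is correct and is exactly the standard Chernoff method used to prove Hoeffding's inequality for sub-gaussian variables (see, e.g., Vershynin's book, which the paper cites). Note, however, that the paper does not supply its own proof of this theorem: it is stated as a cited result from \cite{vershynin_2018} and used as a black box throughout, so there is no ``paper's proof'' to compare against.
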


We next introduce Talagrand's inequality. First we need a few definitions.  We use the \emph{radius}
and \emph{gaussian width} to measure the size of a set $T\in \mathbb{R}^d$:
\begin{align}
  \label{eq:gauss width rad def}
  \text{rad}(T) &= \sup_{t\in T} \|t\|_2,\nonumber\\ w(T) &= \mathbb{E} \sup_{t\in T}\ \langle g, t
  \rangle,
\end{align}
where $g\in\mathbb{R}^n$ is a random vector with $\mathcal{N}(0,1)$ independently distributed
entries. With these definitions, we introduce the following theorem.
\begin{theorem}[Talagrand's comparison inequality in tail bound form, \cite{vershynin_2018}]\label{theorem:talagrand}
  Let $T\subset \mathbb{R}^{n}$ be a set and $\{X_t\}_{t\in T}$ be a random process indexed by
  elements in $T$, such that $X_{0}=0$. If for all $t,s\in T\cup\{0\}$
  \begin{equation}\label{eq:cond_tal}
	\| X_t - X_s \|_{\psi_2}\leq K\|t-s\|_2,
  \end{equation}
  then for some universal constant $c>0$,
  \begin{equation}
	\text{P}\left\{\sup_{t\in T} \vert X_t \vert \leq cK(w(T)+\mathrm{rad}(T)\ u)\right\}\geq 1-2\exp(-u^2).
  \end{equation}
\end{theorem}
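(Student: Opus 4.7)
The strategy is to combine a generic chaining argument with the sub-Gaussian increment hypothesis \eqref{eq:cond_tal}, and then invoke Talagrand's majorizing measure theorem to pass from the chaining functional to the Gaussian width $w(T)$. First I would construct an admissible sequence of nested subsets $T_0\subseteq T_1\subseteq\cdots\subseteq T$ with $|T_n|\le 2^{2^n}$ (taking $T_0=\{0\}$ as the anchor), and for each $t\in T$ denote by $\pi_n(t)$ its closest point in $T_n$, arranging that the sequence is near-optimal for the chaining functional
\begin{equation*}
\gamma_2(T) \ :=\ \inf \sup_{t\in T}\sum_{n\ge 0} 2^{n/2}\,\|t-\pi_n(t)\|_2,
\end{equation*}
where the infimum runs over all admissible sequences.

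Next I would use the telescoping decomposition $X_t=\sum_{n\ge 1}(X_{\pi_n(t)}-X_{\pi_{n-1}(t)})$, valid because $\pi_n(t)\to t$ and $X_{\pi_0(t)}=X_0=0$. By the hypothesis \eqref{eq:cond_tal}, each increment $X_{\pi_n(t)}-X_{\pi_{n-1}(t)}$ is sub-Gaussian with parameter $K\|\pi_n(t)-\pi_{n-1}(t)\|_2$. At level $n$ there are at most $|T_n|\cdot|T_{n-1}|\le 2^{2^{n+1}}$ distinct increment pairs; applying the standard sub-Gaussian tail bound to each and taking a union bound yields, with probability at least $1-2e^{-u^2}$, the uniform estimate
\begin{equation*}
\bigl|X_{\pi_n(t)}-X_{\pi_{n-1}(t)}\bigr|\ \le\ cK\bigl(2^{n/2}+u\bigr)\|\pi_n(t)-\pi_{n-1}(t)\|_2
\end{equation*}
simultaneously for all $t\in T$ and $n\ge 1$, where the $2^{n/2}$ term absorbs the entropy $\log|T_n|\le 2^n\log 2$. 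Summing in $n$ gives $\sup_{t\in T}|X_t|\le cK\bigl(\gamma_2(T)+\mathrm{rad}(T)\,u\bigr)$, using that $\sum_{n\ge 1}\|\pi_n(t)-\pi_{n-1}(t)\|_2\le 2\,\mathrm{rad}(T)$ to handle the $u$-linear piece.

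The final step is to replace $\gamma_2(T)$ by $w(T)$ via Talagrand's majorizing measure theorem: for any $T\subset\mathbb{R}^n$ the canonical Gaussian process $g\mapsto\langle g,t\rangle$ has sub-Gaussian increments at scale $\|t-s\|_2$, and Talagrand's theorem asserts $\gamma_2(T)\le c\,w(T)$. Substituting this bound into the chained estimate yields the stated tail inequality. The main obstacle is this majorizing measure step: establishing $\gamma_2(T)\lesssim w(T)$ requires either a delicate multi-scale partitioning argument driven by growth functionals on $T$, or an indirect duality between $\gamma_2$ and Gaussian widths — neither is elementary. The chaining and union-bound pieces are comparatively routine once the admissible sequence is fixed. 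For the purposes of this paper I would simply take Talagrand's theorem as a black box, exactly as the authors do by citing \cite{vershynin_2018}, and present the chaining reduction explicitly only if a quantitative handle on the constant $c$ were needed downstream.
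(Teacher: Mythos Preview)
The paper does not prove this theorem at all: it is quoted verbatim from \cite{vershynin_2018} as a black-box tool in the preliminaries section, so there is no ``paper's own proof'' to compare against. Your closing sentence already anticipates this.

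Your sketch is the standard route (generic chaining plus Talagrand's majorizing measure theorem to convert $\gamma_2(T)$ into $w(T)$), and the overall architecture is correct. There is, however, one genuine gap in the bookkeeping. You write that the $u$-linear piece is handled ``using that $\sum_{n\ge 1}\|\pi_n(t)-\pi_{n-1}(t)\|_2\le 2\,\mathrm{rad}(T)$.'' This inequality is \emph{not} true for an arbitrary near-optimal admissible sequence: nothing prevents the chain $\pi_0(t),\pi_1(t),\pi_2(t),\ldots$ from wandering, so the total variation $\sum_n\|\pi_n(t)-\pi_{n-1}(t)\|_2$ can exceed any fixed multiple of $\mathrm{rad}(T)$. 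Finiteness of $\gamma_2(T)=\sup_t\sum_n 2^{n/2}\|\pi_n(t)-\pi_{n-1}(t)\|_2$ forces the increments to decay, but does not by itself give the bound you claim. If you run your union bound as written, the $u$-term becomes $cKu\sum_n\|\pi_n(t)-\pi_{n-1}(t)\|_2$, which is not controlled.

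The standard repairs are either (i) to set the level-$n$ threshold proportional to $\sqrt{2^n+u^2}$ rather than $2^{n/2}+u$, and then split the sum at $n_0\approx 2\log_2 u$ while arranging the admissible sequence so that $d(t,T_n)$ decays geometrically (one can always modify a near-optimal sequence to achieve this without inflating $\gamma_2$ by more than a constant), or (ii) to handle the single increment $X_{\pi_1(t)}-X_0$ separately with a direct sub-Gaussian tail bound contributing $cKu\,\mathrm{rad}(T)$, and absorb the remaining levels into the $\gamma_2$ term via a slightly sharper per-level threshold. Either fix is routine once you see the issue, but the step as you wrote it does not go through.
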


\subsection{Discontinuous dynamical systems}
The differential equation \eqref{eq:dyn_sysLUD} in \cref{thm:converg. nonconvex} is called a
differential inclusion, a term that generalizes dynamical systems when the forcing term is a set of
functions. An introductory summary to the area can be found in \cite{Cortes_08}, as well as
classical books \cite{Aubin_84} and \cite{Filippov_88}. In \cref{sec:fin time SOd}, we derive finite
time convergence for a particular set of differential inclusions in $\SO(d)$. This results are based
in generalizations of Lyapunov functions for differential inclusions in
\cite{Baccioti_99}. Numerical methods to solve differential inclusions can be found in
\cite{Acary_08}.
\begin{definition}[Differential inclusion, \cite{Acary_08}]
  A differential inclusion is defined by
	\begin{align}
	\label{eq: DI}
	\frac{dz}{dt}\in \mathcal{F}(z(t)),\quad t\in[0,T],\quad  z(0) = z_0,
	\end{align}
	where $z:\mathbb{R}\rightarrow\mathcal{D}\subseteq\mathbb{R}^{n}$ is a function of time,
    $\mathfrak{B}(\mathbb{R}^{n})$ is the collection of subsets of $\mathbb{R}^{d}$,
    $\mathcal{F}:\mathcal{D}\rightarrow\mathfrak{B}(\mathbb{R}^{n})$ is a set-valued map which
    associates to any point $z\in \mathcal{D}$ with a set $\mathcal{F}(z)\subset\mathbb{R}^{n}$ and
    $T>0$.
\end{definition}
\begin{definition}[Solution of differential inclusion]
  A solution $z(t)$ of the differential inclusion \eqref{eq: DI} is absolutely continuous, i.e. it
  satisfies
  \begin{align}
	z(t) = z(0) + \int_{0}^{t}\frac{dz(s)}{ds}\, ds
  \end{align}
  such that $\frac{dz}{dt}(t)\in\mathcal{F}(z(t))$ almost everywhere.
\end{definition}
\begin{definition}[Generalized gradient, \cite{Baccioti_99}]\label{def:def.gen.grad}
  Let $f:\mathbb{R}^{n}\rightarrow\mathbb{R}$ be a locally Lipschitz function, then the generalized
  gradient of $f$ at $z$ is the set
  \begin{align}
	\bar\partial f(z):=\mathrm{conv}\left\{\lim_{j\rightarrow+\infty}\nabla f(z_{j})\mid
    z_{j}\rightarrow z, z_{j}\notin G, z_{j}\notin G_{f}\right \}
  \end{align}
  where $G$ is any set of zero measure in $\mathbb{R}^{n}$ and $G_{f}$ is the set of measure zero
  where the Euclidean gradient $\nabla f$ does not exists.
\end{definition}

\subsection{$\SO(d)$ as a Riemannian manifold}\label{sec: riemanian manifold}
$\SO(d)$ can be seen as an embedded submanifold of $\mathbb{R}^{d\times d}$, or as a matrix Lie
group. We follow the manifold optimization approach in \cite{absil_2008} to define Riemannian
gradient over $SO(d)$. Then we look at $\SO(d)$ as a matrix Lie group to expose the close connection
with skew-symmetric matrices of dimension $d$, following \cite{Hall_2015}.

\begin{definition}[Special orthogonal group]
	The special orthogonal group of dimension $d$, $\SO(d)$ is defined as 
	\begin{align}
	\SO(d):=\{X\in\mathbb{R}^{d\times d}\mid X^\top X = I, \det(X) = 1 \}.
	\end{align}
\end{definition}
Notice than the dimension of $\SO(d)$ is $d(d-1)/2$. 

\begin{definition}[Tangent space of $\SO(d)$]
	For $R\in\SO(d)$, the tangent space of $\SO(d)$ at a rotation $R$ is defined as
	\begin{align}
	\label{def: Tangent space}
	T_{R}\SO(d):=R\cdot\mathcal{S}_{\mathrm{skew}}(d)
	\end{align}
	where the set of all skew-symmetric $d\times d$ matrices is defined as 
	\begin{align}
	\mathcal{S}_{\mathrm{skew}}(d):=\{X\in\mathbb{R}^{d\times d}\mid -X=X^\top\}.
	\end{align}
\end{definition}

As an embedded submanifold of $\mathbb{R}^{d\times d}$, the Riemmannian gradient of a function $f$
over $\SO(d)$ is defined as the projection of the Euclidean gradient over $T_{R}\SO(d)$.

\begin{definition}[Riemannian generalized gradient of $f$ over $\SO(d)$]
  Given $f:{\SO(d)}\rightarrow\mathbb{R}$ with locally Lipschitz extension
  $\bar{f}:\mathbb{R}^{d\times d}\rightarrow\mathbb{R}$ such that
  $f=\left.\bar{f}\right\vert_{\SO(d)}$,then the \textit{Riemannian generalized gradient} of $f$ is
  defined as
	\begin{align}
	\label{def: Rieman.gen.grad}
	\partial_{R} f(R) :=R\cdot\mathrm{skew}(R^\top (\bar\partial\bar{f}(R))) 
	\end{align}   
	where $ \mathrm{skew}(A)=(A-A^\top)/2$ and $\bar\partial\bar{f}$ is the Euclidean generalized
    gradient of $\bar{f}$ as defined in \cref{def:def.gen.grad}.
\end{definition}

We introduce the matrix exponential and the principal matrix logarithm.

\begin{definition}[Matrix exponential]
  For any $A\in\mathbb{C}^{d\times d}$, $\exp(A) = \sum_{k=0}^\infty \frac{A^{k}}{k!}$
\end{definition}

\begin{definition}[Principal logarithm of a matrix]
  For any $A\in\mathbb{C}^{d\times d}$ with no negative eigenvalues, there exists a
  unique matrix $C\in\mathbb{C}^{d\times d}$ such that $\exp(C)=A$ and the imaginary part of the
  eigenvalues of $C$ are in the interval $(-\pi,\pi)$. The principal logarithm is defined as $\log(A):= C$
\end{definition}

\begin{definition}[Geodesics on $\SO(d)$]
  For any $R\in\SO(d)$ and $A\in\mathcal{S}_{\mathrm{skew}}(d)$, the geodesic
  $\gamma:\mathbb{R}\rightarrow\SO(d)$ such that $\gamma(0) = R$ and $\dot{\gamma}(0) = R\cdot A\in
  T_{R}\SO(d)$ is given by
  \begin{align}
	\gamma(t) = R\cdot\exp(tA).
	\end{align}
\end{definition}

The matrix exponential map on $\SO(d)$ is surjective \cite[Chapter~11]{Hall_2015}.Therefore, given the set
\begin{align}\label{eq:def_B_skew}
\mathcal{B}_{\mathrm{skew}}(d):=\{S\in\mathcal{S}_\mathrm{skew}(d)\mid \|S\|_{2}\leq 1\}.
\end{align}
then $\SO(d)=\exp\left(\pi\mathcal{B}_{\mathrm{skew}}(d)\right)$.This implies that it is possible to
define a principal logarithm for all rotations with the image contained in $\pi \mathcal{B}_{\mathrm{skew}}(d)$.

  
Additionally, for any pair $R,Q\in\SO(d)$, the geodesic $\gamma(t) = R\cdot \exp(t\log(R^\top Q))$
satisfies $\gamma(0)=R$ and $\gamma(1)=Q$. This remark provides a practical definition of the
Riemannian distance in $\SO(d)$.

\begin{definition}[Riemannian distance in $\SO(d)$]
  Let $R,Q\in\SO(d)$ then the Riemannian distance in $\SO(d)$ is given by
  \begin{align}\label{eq:riemannian_distance}
	D_{\SO}(R,Q) := \|\log(Q^\top R)\|_{F}.
  \end{align}
\end{definition}

\subsection{ $\varepsilon$-nets}\label{sec:enet}
A discrete set $\mathcal{N}^{\varepsilon}_{\mathcal{P}}$ is called an $\varepsilon$-net for a set
$\mathcal{P}$ if for any $a\in \mathcal{P}$, there exists $b\in
\mathcal{N}^{\varepsilon}_{\mathcal{P}}$ such that, for some metric $D$, $D(a, b)\leq \varepsilon$.

\begin{theorem}[\cite{vershynin_2018} ]\label{thm:enet}
	Let $\mathcal{N}_{B}^{\varepsilon}$ be the smallest Euclidean $\varepsilon$-net of
    $\mathcal{B}:=\{x\in\mathbb{R}^q \ \vert\ \|x\|_2 \leq 1\}$ then $\left(\varepsilon^{-1}\right)^{q} \leq
    \vert \mathcal{N}_{\mathcal{B}}^{\varepsilon} \vert \leq \left(3\varepsilon^{-1}\right)^q.$
\end{theorem}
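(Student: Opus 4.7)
The plan is to establish both inequalities by simple volumetric comparisons in $\mathbb{R}^{q}$, exploiting that a Euclidean ball of radius $r$ has Lebesgue volume equal to $r^{q}$ times the volume of $\mathcal{B}$.

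For the lower bound, I would start from the defining property of an $\varepsilon$-net: the translates $x+\varepsilon\mathcal{B}$ for $x\in\mathcal{N}^{\varepsilon}_{\mathcal{B}}$ cover $\mathcal{B}$. Applying Lebesgue measure and subadditivity yields
\begin{equation*}
\mathrm{vol}(\mathcal{B})\;\leq\;\bigl\vert\mathcal{N}^{\varepsilon}_{\mathcal{B}}\bigr\vert\,\varepsilon^{q}\,\mathrm{vol}(\mathcal{B}),
\end{equation*}
which rearranges to $\vert\mathcal{N}^{\varepsilon}_{\mathcal{B}}\vert\geq\varepsilon^{-q}$. This step is essentially one line.

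For the upper bound, I would use the classical packing/covering duality. Choose a maximal $\varepsilon$-separated subset $\mathcal{M}\subseteq\mathcal{B}$, i.e.\ one whose pairwise distances are all at least $\varepsilon$ and which cannot be enlarged without violating that separation. Maximality forces $\mathcal{M}$ to be an $\varepsilon$-net: any $a\in\mathcal{B}$ lying at distance $>\varepsilon$ from every element of $\mathcal{M}$ could be adjoined, contradicting maximality. Hence the minimal net size is bounded by $\vert\mathcal{M}\vert$.

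To bound $\vert\mathcal{M}\vert$, observe that the open balls of radius $\varepsilon/2$ centered at the points of $\mathcal{M}$ are pairwise disjoint and all contained in the inflated ball $(1+\varepsilon/2)\mathcal{B}$. Comparing volumes gives $\vert\mathcal{M}\vert(\varepsilon/2)^{q}\leq(1+\varepsilon/2)^{q}$, so $\vert\mathcal{M}\vert\leq(1+2/\varepsilon)^{q}$. The final cosmetic step is converting this into the advertised $(3/\varepsilon)^{q}$: for $\varepsilon\leq 1$ we use $1+2/\varepsilon\leq 3/\varepsilon$, and for $\varepsilon>1$ a single point already forms an $\varepsilon$-net so the bound is trivial. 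The only mild obstacle is choosing the inflation factor and separation ($1+\varepsilon/2$ and $\varepsilon/2$) so that the disjointness argument is tight enough to recover the clean base constant $3$; beyond that the proof is pure bookkeeping.
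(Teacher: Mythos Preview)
Your argument is correct and is the standard volumetric proof. Note, however, that the paper does not give its own proof of this statement: it is quoted directly from \cite{vershynin_2018} as a background fact in the preliminaries, with no proof supplied. So there is nothing in the paper to compare against beyond the citation; your write-up is essentially the proof one finds in Vershynin's book.
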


\begin{theorem}[\cite{vershynin_2018}]\label{thm:enet monotone}
	Let $\mathcal{K}\subset \mathcal{L}$. Let $\mathcal{N}^{\varepsilon}_\mathcal{K}$ be the smallest
    $\varepsilon$-net of $\mathcal{K}$ and $\mathcal{N}^{\epsilon/2}_\mathcal{L}$ be the smallest
    $\varepsilon/2$-net of $\mathcal{K}$. Then $ \vert \mathcal{N}^{\epsilon}_\mathcal{K}\vert \leq
    \vert \mathcal{N}^{\epsilon/2}_\mathcal{L}\vert$.
\end{theorem}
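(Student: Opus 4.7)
The plan is to exhibit an $\varepsilon$-net of $\mathcal{K}$ whose cardinality is at most $|\mathcal{N}^{\varepsilon/2}_\mathcal{L}|$, and then invoke the minimality of $\mathcal{N}^{\varepsilon}_\mathcal{K}$ to get the claimed inequality. The key observation is that an $\varepsilon/2$-net for the larger set $\mathcal{L}$ can be ``pulled back'' into $\mathcal{K}$ at the cost of doubling the covering radius, courtesy of the triangle inequality.

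Concretely, for each center $b\in\mathcal{N}^{\varepsilon/2}_\mathcal{L}$ I check whether the ball $B(b,\varepsilon/2):=\{x:D(x,b)\leq\varepsilon/2\}$ meets $\mathcal{K}$. If it does, I select a single representative $k_{b}\in B(b,\varepsilon/2)\cap\mathcal{K}$; otherwise I discard $b$. Let $\mathcal{M}\subseteq\mathcal{K}$ denote the collection of all chosen $k_{b}$. By construction $|\mathcal{M}|\leq|\mathcal{N}^{\varepsilon/2}_\mathcal{L}|$, since we pick at most one point per element of the original net.

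Next I verify that $\mathcal{M}$ is an $\varepsilon$-net of $\mathcal{K}$. Given any $a\in\mathcal{K}\subseteq\mathcal{L}$, the defining property of $\mathcal{N}^{\varepsilon/2}_\mathcal{L}$ furnishes some $b\in\mathcal{N}^{\varepsilon/2}_\mathcal{L}$ with $D(a,b)\leq\varepsilon/2$. Because $a$ itself lies in $B(b,\varepsilon/2)\cap\mathcal{K}$, this intersection is nonempty, so the representative $k_{b}$ was chosen and lies in $\mathcal{M}$. The triangle inequality then gives
\begin{equation}
D(a,k_{b})\leq D(a,b)+D(b,k_{b})\leq \varepsilon/2+\varepsilon/2=\varepsilon.
\end{equation}
Thus $\mathcal{M}$ covers $\mathcal{K}$ at scale $\varepsilon$, and by minimality of $\mathcal{N}^{\varepsilon}_\mathcal{K}$ we conclude $|\mathcal{N}^{\varepsilon}_\mathcal{K}|\leq|\mathcal{M}|\leq|\mathcal{N}^{\varepsilon/2}_\mathcal{L}|$.

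This argument is essentially routine and I do not anticipate a real technical obstacle; the only subtlety worth flagging is the convention governing where net points are permitted to live. If one reads the definition in \cref{sec:enet} loosely enough to allow a net of $\mathcal{K}$ to contain points outside $\mathcal{K}$, then $\mathcal{N}^{\varepsilon/2}_\mathcal{L}$ is already an $\varepsilon$-net of $\mathcal{K}$ (since $\mathcal{K}\subseteq\mathcal{L}$ and $\varepsilon/2\leq\varepsilon$) and the shifting step is unnecessary. Under the more common convention that the net must be a subset of the covered set, the pull-back construction above is precisely what justifies the loss of a factor of two in the radius, and the proof goes through uniformly for any metric $D$.
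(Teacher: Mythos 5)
Your proof is correct and is exactly the standard argument for this fact; the paper itself cites it to \cite{vershynin_2018} without proof, and your pull-back construction (selecting one representative of $\mathcal{K}$ from each $\varepsilon/2$-ball of the net of $\mathcal{L}$ that meets $\mathcal{K}$, then applying the triangle inequality and minimality) is the intended textbook proof. Your remark about the convention on where net points may live is also the right subtlety to flag, and your handling of it is consistent with how the lemma is used in \cref{sec: app_enet}.
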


\section{Point-set registration via convex relaxation}\label{sec: behavior cvx}



We discuss the admissible corruption level to recover the ground truth rotation using a convex
relaxation of $\SO(d)$, i.e. solving \cref{prob:conv1} and \cref{prob:conv2}. Let the admissible
corruption threshold be defined as
\begin{align}
  \label{eq: definition p tilde2}
  \tilde{p}(d) := \left(1+\frac{B\left(d-1,\frac{1}{2}\right)}{B\left(\frac{d-1}{2},\frac{1}{2}\right)}\right)^{-1},
\end{align}
where $B(m,n)$ is the beta function. In \cref{sec:exact recovery},
we prove \cref{thm:sharpness convex}, showing that if $p<\tilde{p}(d) -o(1)$ the ground truth
rotation can be recovered for $N$ large enough. Similarly in \cref{sec:no recovery}, we prove
\cref{thm:imp. recovery}, showing that the ground truth is not recovered when $p>\tilde{p}(d) +
o(1)$.

The proof of \cref{thm:sharpness convex} and \cref{thm:imp. recovery} is based on constructing lower
and upper bounds for $L(A)-L(I)$, for any matrix $A$. We use the following inequality.
\begin{lemma}\label{lemma:bound difference norms}
	Let $u,v\in\mathbb{R}^{n}\setminus\{0\}$ then 
	\begin{align}
	\label{eq: lower bound difference norms}
	u^\top \frac{v}{\|v\|_{2}}\leq \|u +v\|_{2}-\|v\|_{2}\leq u^\top \frac{v}{\|v\|_{2}} +
    \frac{1}{2}\frac{\|u\|_{2}^3}{\sqrt{\|u\|_{2}^2\|v\|_{2}^2 - (u^\top v)^2}}
	\end{align} 
\end{lemma}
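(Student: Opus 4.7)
The plan is to parametrize by the one-dimensional function $f(t):=\|v+tu\|_{2}$ on $[0,1]$, which linearly interpolates between $f(0)=\|v\|_{2}$ and $f(1)=\|u+v\|_{2}$. Since $\|\cdot\|_{2}$ is convex and $v\neq 0$, $f$ is convex and smooth at $0$, so convexity immediately yields $f(1)-f(0)\geq f'(0)$. A direct computation gives $f'(0)=u^{\top}v/\|v\|_{2}$, which is exactly the claimed lower bound. Equivalently, $v/\|v\|_{2}$ is the unique subgradient of $\|\cdot\|_{2}$ at $v$.

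For the upper bound I would apply Taylor's theorem with integral remainder to $f$, writing
\begin{equation*}
f(1)-f(0)-f'(0)\;=\;\int_{0}^{1}(1-t)\,f''(t)\,dt,
\end{equation*}
and then produce a uniform upper bound on $f''$ over $[0,1]$. A direct differentiation gives
\begin{equation*}
f''(t)\;=\;\frac{\|u\|_{2}^{2}\,\|v+tu\|_{2}^{2}-\bigl((v+tu)^{\top}u\bigr)^{2}}{\|v+tu\|_{2}^{3}}.
\end{equation*}
The key observation is that the numerator is actually independent of $t$: expanding shows it collapses to the Gram determinant $C:=\|u\|_{2}^{2}\|v\|_{2}^{2}-(u^{\top}v)^{2}$, the squared area of the parallelogram spanned by $u$ and $v$. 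Completing the square in $t$ also gives $f(t)^{2}=\|u\|_{2}^{2}(t-t^{\ast})^{2}+C/\|u\|_{2}^{2}$ with $t^{\ast}=-u^{\top}v/\|u\|_{2}^{2}$, i.e.\ $f(t)$ is bounded below by $\sqrt{C}/\|u\|_{2}$, the perpendicular distance from the origin to the affine line $\{v+tu:t\in\mathbb{R}\}$. Combining these two facts yields $f''(t)\leq C/(\sqrt{C}/\|u\|_{2})^{3}=\|u\|_{2}^{3}/\sqrt{C}$ uniformly on $[0,1]$, and the factor of $1/2$ comes out of $\int_{0}^{1}(1-t)\,dt$, producing exactly the claimed remainder.

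The only real substance is the two geometric identities of the previous paragraph; once they are in hand, everything else is Taylor expansion of a smooth scalar function. The remaining point to check is a degeneracy: when $C=0$, i.e.\ $u$ and $v$ are collinear, the right-hand side of the lemma is $+\infty$ and the claim is vacuous, while for $C>0$ the line $\{v+tu\}$ avoids the origin on $[0,1]$, making $f$ smooth there and justifying the Taylor step. I do not anticipate any serious obstacle beyond the constancy-of-numerator observation, which is the single idea that turns a second-order Taylor remainder on a nonlinear quantity into the clean closed form on the right-hand side.
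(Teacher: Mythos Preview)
Your proposal is correct and follows essentially the same approach as the paper's proof: a second-order Taylor expansion of a one-variable function together with a uniform bound on the second derivative. The only difference is cosmetic---the paper parametrizes in polar form via $\alpha=\|u\|_2/\|v\|_2$ and the angle $\theta$ between $u$ and $v$, writing $f(\alpha)=\|v\|_2(\sqrt{1+2\alpha\cos\theta+\alpha^2}-1)$ and bounding $f''(\alpha)=\|v\|_2\sin^2\theta/(\sin^2\theta+(\cos\theta+\alpha)^2)^{3/2}\leq\|v\|_2/\sin\theta$, whereas you work directly with $f(t)=\|v+tu\|_2$ and phrase the same bound as $f''(t)=C/f(t)^3\leq \|u\|_2^3/\sqrt{C}$ with $C$ the Gram determinant. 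Your identification of the constant numerator as the Gram determinant and of the lower bound on $f(t)$ as the perpendicular distance from the origin to the line is exactly the paper's inequality $\sin^2\theta+(\cos\theta+\alpha)^2\geq\sin^2\theta$ written in Cartesian rather than polar language.
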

\begin{proof}
  See \cref{sec:bound difference norms}.
\end{proof}

\subsection{Proof of \cref{thm:sharpness convex}: Regime where exact recovery is possible}\label{sec:exact recovery}
We start this section by creating a lower bound of the expectation of $L(A)-L(I)$ in terms of the difference between $\tilde{p}(d)$ and $p$. 

\begin{lemma}[Success of \cref{prob:conv1} in expectation] \label{lemma:pop_suc_cvx}
  Let $A\in\mathbb{R}^{d\times d}\setminus I$ and $L(\cdot)$ the LUD as defined in \eqref{eq:LUDdef},
  then
  \begin{align}
    \E[ L(A) - L(I)] \geq \frac{\|A-I\|_{*}}{d}\frac{\left(\tilde{p}(d) - p\right)}{\tilde{p}(d)},
  \end{align}
  where $\|\cdot\|_{*}$ corresponds to the nuclear norm. 
\end{lemma}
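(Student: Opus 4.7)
The plan is to split $\E[L(A)-L(I)]$ into an inlier part and an outlier part and to lower bound each so that the two pieces aggregate into a multiple of $\|B\|_*$, where $B := A-I$. Because we have reduced to $R_0=I$, the inlier terms are simply $\|Bx_i\|_2$ (since $y_i=x_i$ for $i\in\mathcal{C}^c$), while for $i\in\mathcal{C}$ I will invoke \cref{lemma:bound difference norms} with $u=Bx_i$ and $v=x_i-y_i$ to obtain the pointwise bound
\begin{equation*}
\|Ax_i-y_i\|_2-\|x_i-y_i\|_2 \;\geq\; \frac{(Bx_i)^\top(x_i-y_i)}{\|x_i-y_i\|_2}.
\end{equation*}
Taking expectations then yields $\E[L(A)-L(I)]\geq (1-p)\E\|Bx\|_2 + p\,\E\bigl[(Bx)^\top(x-y)/\|x-y\|_2\bigr]$ for independent uniform $x,y$ on the sphere.

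For the inlier expectation, I will use the SVD $B=U\Sigma V^\top$ and the fact that $V^\top x$ is again uniform on $\mathbb{S}^{d-1}$ to reduce to $\E\bigl(\sum_i \sigma_i^2 u_i^2\bigr)^{1/2}$ with $u$ uniform. Applying Cauchy--Schwarz to the sequences $(\sigma_i|u_i|)_i$ and $(|u_i|)_i$ (using $\sum_i u_i^2=1$) gives the pointwise bound $\bigl(\sum_i \sigma_i^2 u_i^2\bigr)^{1/2}\geq \sum_i \sigma_i u_i^2$; taking expectations and using $\E u_i^2=1/d$ then yields $\E\|Bx\|_2\geq \|B\|_*/d$. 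For the outlier expectation, rotational invariance of the joint law of $(x,y)$ forces $\E[x(x-y)^\top/\|x-y\|_2]=\beta I$ for some scalar $\beta$; tracing the identity and using $1-x^\top y=\|x-y\|_2^2/2$ gives $\beta=\E\|x-y\|_2/(2d)$, and hence $\E[(Bx)^\top(x-y)/\|x-y\|_2]=\beta\,\Tr(B)$.

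To combine the two pieces I will use the trivial inequality $\Tr(B)\geq -\|B\|_*$ (since $|\Tr(B)|\leq\sum_i|\lambda_i(B)|\leq \|B\|_*$), which gives
\begin{equation*}
\E[L(A)-L(I)] \;\geq\; \|B\|_*\left(\frac{1-p}{d}-p\beta\right).
\end{equation*}
The last step is to identify $d\beta=\E\|x-y\|_2/2$ with the beta-function ratio in $\tilde p(d)$. Writing $\|x-y\|_2=2|\sin(\theta/2)|$, where $\theta\in[0,\pi]$ has density proportional to $\sin^{d-2}\theta$, a change of variables $\phi=\theta/2$ followed by the Legendre duplication formula for the Gamma function reduces $\E\sin(\theta/2)$ to $B(d-1,\tfrac12)/B(\tfrac{d-1}{2},\tfrac12)=(1-\tilde p)/\tilde p$. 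Substituting $\beta=(1-\tilde p)/(d\tilde p)$ into the bracket collapses it algebraically to $(\tilde p-p)/(d\tilde p)$, giving the claimed inequality.

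The main obstacle is getting the constants to align exactly: the Cauchy--Schwarz bound $\E\|Bx\|_2\geq \|B\|_*/d$ and the crude bound $\Tr(B)\geq -\|B\|_*$ are each saturated in different directions (positive scalar multiples of $I$ versus negative multiples of $I$), and it is precisely the definition of $\tilde p(d)$ through $\E\|x-y\|_2$ that makes their combination vanish exactly at $p=\tilde p(d)$. The remaining effort is the explicit sphere integral $\E\sin(\theta/2)$ together with the gamma-function manipulations matching it to the ratio $B(d-1,\tfrac12)/B(\tfrac{d-1}{2},\tfrac12)$.
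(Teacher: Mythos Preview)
Your proposal is correct and follows essentially the same route as the paper: split into inlier and outlier parts, apply \cref{lemma:bound difference norms} pointwise to the outliers, use $\E\|Bx\|_2\ge\|B\|_*/d$ (the paper's \cref{lemma:bound exp. norm}) for the inliers, compute the outlier expectation as a multiple of the identity via rotational invariance (the paper's \cref{lemma:exp_x_y sphere}), and combine via $\Tr(B)\ge -\|B\|_*$. You are actually a bit more explicit than the paper in stating the trace bound and the Cauchy--Schwarz/SVD argument behind $\E\|Bx\|_2\ge\|B\|_*/d$, both of which the paper uses without spelling out.
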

\begin{proof}
  Using the definition of $L(\cdot)$ and \cref{lemma:bound difference norms}, we have 
  \begin{align}
	L(A)-L(I) &= \frac{1}{N}\sum_{i\in\mathcal{C}^{c}} \|(A-I)x_i\|_2 + \frac{1}{N}\sum_{i\in
      \mathcal{C}} \left(\|Ax_i - y_i\|_2- \|x_i - y_i\|_2\right)\nonumber\\ &\geq
    \frac{1}{N}\sum_{i\in\mathcal{C}^{c}} \|(A-I)x_i\|_2 + \frac{1}{N}\sum_{i\in\mathcal{C}}
    \left\langle (A-I)x_i, \frac{x_i-y_i}{\|x_i-y_i\|_{2}}\right\rangle\label{eq:deterministic lower
      bound convex}.
  \end{align}
  Given the distribution of $\{x_i\}_{i=1}^{N},\{y_i\}_{i=1}^{N}$ in \eqref{eq:prob_model0} and \eqref{eq:prob_model}, then 
  \begin{align}
    \E [L(A)-L(I)]\geq (1-p) \E \|(A-I)x\|_2 +\frac{p}{2} \left\langle
    A-I,\E\left[\frac{(x-y)(x-y)^\top}{\|x-y\|_{2}}\right]\right\rangle
  \end{align}
  for $x,y\in\Unif(\mathbb{S}^{d-1})$ independent. By \cref{lemma:exp_x_y sphere} we have
  \begin{align}
    \E\left[\frac{(x-y)(x-y)^\top}{\|x-y\|_{2}}\right]& =\frac{\E\|x-y\|_{2}}{d}I_{d} = \frac{2}{d}
    \frac{B\left(d-1,\frac{1}{2}\right)}{B\left(\frac{d-1}{2},\frac{1}{2}\right)}\ I_{d}.
  \end{align} 
  Similarly, \cref{lemma:bound exp. norm} provides the lower bound $\mathbb{E}\|(A-I)x\|_2\geq
  \|A-I\|_{*}/d$. Then,
  \begin{align}
    \E [L(A)-L(I)]\geq
    \frac{\|A-I\|_{*}}{d}\left[1-p\left(1+\frac{B\left(d-1,\frac{1}{2}\right)}{B\left(\frac{d-1}{2},\frac{1}{2}\right)}\right)\right].
  \end{align}
\end{proof}

\cref{lemma:pop_suc_cvx} implies that as $N\rightarrow \infty$, if $p<\tilde{p}(d)$,
\begin{align}
L(A)>L(I)\text{ for }A\neq I.
\end{align}
Therefore the unique minimum of $L(\cdot)$ is $I$. Hence one can exactly recover the ground truth
rotation via solving the convex problem if $p<\tilde{p}(d)$.

Now, we deduce a similar behavior when the sample size $N$ is finite. To prove \cref{thm:sharpness
  convex}, we show that the RHS of \eqref{eq:deterministic lower bound convex} is bounded away from
zero with high probability for most configurations of $\{x_i\}_{i=1}^N, \{y_i\}_{i=1}^N$. We use the
following result derived from Talagrand's inequality (\cref{theorem:talagrand}).

\begin{lemma}\label{lemma:concentration of residuals}
  Let $\{x_{i}\}_{i=1}^{n}$ be a set of independent random variables in $\mathbb{R}^{d}$ and
  $\{f_i\}_{i=1}^{n}$ a set of functions such that $f_{i}:\mathbb{R}^{d\times d}\times
  \mathbb{R}^{d}\rightarrow\mathbb{R}$ such that
  \begin{align}
	\label{eq: subgaussian increment}
	\|f_i(A,x_i)-f_i(B,x_i)\|_{\psi_{2}} \leq K \|A-B\|_{F}\quad
  \end{align}
  for all $\|A\|_{F}\leq 1,\|B\|_{F} \leq 1,\ i = 1,\dots,n$. Then, there exist a universal constant
  $c$ such that with high probability
  \begin{equation}
	\sup_{\|A\|_{F}\leq 1} \left\vert \sum_{i = 1}^n f_i(A,x_{i}) - \mathbb{E} f_i(A,x_{i})
    \right\vert \leq c\sqrt{n(d^2K^2+ \log n)}.
  \end{equation}
\end{lemma}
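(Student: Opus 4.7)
The plan is to apply Talagrand's comparison inequality (\cref{theorem:talagrand}) to the centered empirical process
\[
X_A := \sum_{i=1}^n \bigl(f_i(A, x_i) - \mathbb{E} f_i(A, x_i)\bigr),
\]
indexed by the Frobenius unit ball $T := \{A\in\mathbb{R}^{d\times d} : \|A\|_F \leq 1\}$, which I view as a subset of $\mathbb{R}^{d^2}$.

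First, I would verify the sub-gaussian increment condition \eqref{eq:cond_tal}. For any $A,B \in T$, write $X_A - X_B = \sum_{i=1}^n Z_i$ where $Z_i := (f_i(A,x_i)-f_i(B,x_i)) - \mathbb{E}[f_i(A,x_i)-f_i(B,x_i)]$ is a centered independent random variable. Centering preserves the $\psi_2$ norm up to a universal constant, so the hypothesis \eqref{eq: subgaussian increment} gives $\|Z_i\|_{\psi_2} \leq cK\|A-B\|_F$. Using the MGF characterization \eqref{eq: MGF subgaussian} and independence, the sum of independent centered sub-gaussians is itself sub-gaussian with squared norm controlled by the sum of squared norms, so $\|X_A - X_B\|_{\psi_2} \leq cK\sqrt{n}\,\|A-B\|_F$.

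Second, I would compute the geometric parameters of $T$. Clearly $\mathrm{rad}(T) = 1$, while the Gaussian width equals $w(T) = \mathbb{E}\sup_{\|A\|_F \leq 1}\langle G, A\rangle = \mathbb{E}\|G\|_F$ for $G \in \mathbb{R}^{d\times d}$ with i.i.d.\ $\mathcal{N}(0,1)$ entries; Jensen's inequality yields $w(T) \leq \sqrt{\mathbb{E}\|G\|_F^2} = d$. To meet the hypothesis $X_{t_0}=0$ of \cref{theorem:talagrand}, I would pass to the shifted process $\tilde{X}_A := X_A - X_{A_0}$ for some fixed reference $A_0 \in T$, which has the same sub-gaussian increments and vanishes at $A_0$.

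Third, I would invoke \cref{theorem:talagrand} with parameter $u = c\sqrt{\log n}$, so that the failure probability $2\exp(-u^2)$ is of order $1/n$. This yields $\sup_{A \in T}|\tilde{X}_A| \leq cK\sqrt{n}(d + \sqrt{\log n})$ with high probability. Adding the single-point contribution $|X_{A_0}|$, which can be controlled at the same order by Hoeffding's inequality (\cref{theorem:hoeffding}), and using $(a+b)^2 \leq 2(a^2+b^2)$ to move both terms inside a single square root, gives the stated bound up to reorganization of the universal constant. The main obstacle is the first step: carefully upgrading the per-index sub-gaussian increment hypothesis to a sub-gaussian bound on the centered sum, which hinges on the standard fact that centering and summing independent $\psi_2$ variables behave in the expected way via \eqref{eq: MGF subgaussian}. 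The remaining work, evaluating $w(T)$, reconciling the reference point, and tuning $u$, is routine.
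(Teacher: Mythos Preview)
Your approach matches the paper's: same centered process $X_A$, same sub-gaussian increment bound via independence and centering, same computation $w(T)\leq d$ and $\mathrm{rad}(T)=1$, and the same appeal to \cref{theorem:talagrand}. The one substantive difference is your choice $u=c\sqrt{\log n}$: this yields $cK\sqrt{n}\,(d+\sqrt{\log n})$, which after squaring carries an extra factor $K^2$ on the $\log n$ term and so does not reproduce the stated bound $c\sqrt{n(d^2K^2+\log n)}$ when $K$ is not of constant order. The paper instead takes $u=\sqrt{\log n}/K$, so that the $K$ multiplying $\mathrm{rad}(T)\cdot u$ cancels and one obtains $c\sqrt{n}\,(Kd+\sqrt{\log n})\leq c\sqrt{n(d^2K^2+\log n)}$ exactly. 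Your shift by a reference point $A_0$ and subsequent Hoeffding correction is more careful than the paper, which applies Talagrand directly with the zero matrix as reference (using that $0\in T$ and the increment hypothesis holds at $B=0$); that extra step is harmless but unnecessary.
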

\begin{proof}
  See \cref{sec:con_res}.
\end{proof}

We can now give the proof of \cref{thm:sharpness convex}.
  \begin{proof}[Proof of \cref{thm:sharpness convex}]
  \cref{lemma:pop_suc_cvx} and \eqref{eq:deterministic lower bound convex} imply that
  \begin{align}
    L(A)-L(I) \geq& \frac{\|A-I\|_{*}}{d}\frac{\left(\tilde{p}(d) - p\right)}{\tilde{p}(d)}+
    \frac{1}{N}\sum_{i\in\mathcal{C}^{c}} \left(\|(A-I)x_i\|_2 - \E \|(A-I)x_i\|_2 \right)\nonumber\\& +
    \frac{1}{N}\sum_{i\in\mathcal{C}} \left(\left\langle (A-I)x_i,
    \frac{x_i-y_i}{\|x_i-y_i\|_{2}}\right\rangle - \E \left\langle (A-I)x_i,
    \frac{x_i-y_i}{\|x_i-y_i\|_{2}}\right\rangle \right).\label{eq:deterministic lower bound
      convex2}
  \end{align}
  
  To bound the last two terms in the RHS of \eqref{eq:deterministic lower bound convex2}, we first
  show that each term satisfies condition \eqref{eq: subgaussian increment} of
  \cref{lemma:concentration of residuals}. By triangle inequality, for $E,F\in\mathbb{R}^{d\times
    d}$
  \begin{align}
    \max_{x\in\mathbb{S}^{d-1}} \left \vert \|Ex\|_2 -\|Fx\|_2\right\vert&\leq \max_{x\in\mathbb{S}^{d-1}} \|(E-F)x\|_2 \leq \|E-F\|_{F}
  \end{align}
  and 
    \begin{align}
  \max_{x,u\in\mathbb{S}^{d-1}} \left \vert \left\langle Ex, u\right\rangle -\left\langle Fx, u\right\rangle\right\vert&\leq \max_{x\in\mathbb{S}^{d-1}} \|(E-F)x\|_2 \leq \|E-F\|_{F}.
  \end{align}
  Then $\|Ex\|_2 -\|Fx\|_2$ and $\left\langle Ex, u\right\rangle -\left\langle Fx, u\right\rangle$ are
  bounded random variables. Hence \eqref{eq: bounded subgaussian norm} implies for $ i\in\mathcal{C}^{c}$,
  \begin{align}
   \|\|Ex_{i}\|_2 -\|Fx_{i}\|_2\|_{\psi_2}\leq  (\log 2)^{-1/2} \|E-F\|_{F},
  \end{align}
  and for $i\in\mathcal{C}$,
    \begin{align}
 \left\|\left\langle Ex_i, \frac{x_i-y_i}{\|x_i-y_i\|_{2}}\right\rangle -\left\langle Fx_i, \frac{x_i-y_i}{\|x_i-y_i\|_{2}}\right\rangle\right\|_{\psi_2}\leq  (\log 2)^{-1/2} \|E-F\|_{F}.
  \end{align}
  
  Now, for any $E\in\mathbb{R}^{d\times d}$, we define
  \begin{align}
    \label{def: XE}
    X[E] :=\sum_{i\in\mathcal{C}^{c}} \left(\|Ex_i\|_2  - \E \|Ex_i\|_2 \right) + \sum_{i\in\mathcal{C}} \left(\left\langle E, \frac{x_i-y_i}{\|x_i-y_i\|_{2}}x_i^\top -  \E \left[\frac{x_i-y_i}{\|x_i-y_i\|_{2}}x_i^\top \right] \right\rangle \right).
  \end{align}
  Since $\|A-I\|_{*}>\|A-I\|_{F}$, then by inserting \eqref{def: XE} in the RHS of
  \eqref{eq:deterministic lower bound convex2} we get
  \begin{align}
    L(A)-L(I)&\geq \frac{\|A-I\|_{F}}{d}\left(\frac{\tilde{p}(d) - p}{\tilde{p}(d)}+
    \frac{d}{N}X\left[\frac{A-I}{\|A-I\|_{F}}\right]\right).
  \end{align}
  Using the result of \cref{lemma:concentration of residuals}, we have that
  \begin{align}
    \sup_{\|E\|_{F}=1} |X[E]| \leq  c\sqrt{N(d^2+\log N)}
  \end{align}
  with high probability. Therefore with high probability, for all $A\in\mathbb{R}^{d\times d} $
  \begin{align}\label{eq:lower_bound_LA}
    L(A)-L(I)&\geq \frac{\|A-I\|_{F}}{d\ \tilde{p}(d)}\left(\tilde{p}(d)- p -
    O\left(d\sqrt{\frac{(d^2+\log N)}{N}} \right)\right).
  \end{align}
  Then, with high probability, for all $A\neq I$,
  \begin{align}
    L(A)>L(I)\quad \text{if}\quad p + O\left(d\sqrt{\frac{(d^2+\log N)}{N}} \right)< \tilde{p}(d).
  \end{align}
  Hence the only minimizer of \cref{prob:conv1} in this regime is $I$.
\end{proof}

\subsection{Proof of \cref{thm:imp. recovery}: Regime where exact recovery is impossible}\label{sec:no recovery}
To prove \cref{thm:imp. recovery}, we find a matrix $C\in\mathrm{conv}\, \SO(d)$ and $s>0$ such that
when $ p>\tilde{p}(d) + o(1)$
\begin{eqnarray}\label{eq: matrix_C_impossible}
  L(C) < L(A),\quad \text{for all}\ A\ \text{s.t.}\  \|A-I\|_{F}<s.
\end{eqnarray}
with high probability. 
This implies that, if the corruption level is large enough, exact recovery of the ground truth rotation is impossible when minimizing $L(\cdot)$ over
$\mathrm{conv}\, \SO(d)$.  First, we find such matrix $C$ that satisfies \eqref{eq: matrix_C_impossible} in expectation.
\begin{lemma}[Failure of \cref{prob:conv2} in expectation, Part 1.]\label{lemma:pop_fail_cvx}
  Let $L(\cdot)$ the LUD as defined in \eqref{eq:LUDdef}, $p>\tilde{p}(d)$ and 
  \begin{align}\label{eq:def_l_str}
	\lambda^{*}:= \left(\frac{p-\tilde{p}(d)}{\tilde{p}(d)\cdot p} \right)
    \frac{B\left(\frac{d-1}{2},\frac{1}{2}\right)}{B\left(\frac{d-2}{2},\frac{1}{2}\right)}\leq 1
  \end{align}
  then 
  \begin{align}
	\E[L((1-\lambda^*)I)-L(I)]\leq -\frac{\lambda^*}{2\tilde{p}}\left(p-\tilde{p}(d)\right)<0.
  \end{align}
\end{lemma}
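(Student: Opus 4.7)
The plan is to upper-bound $\E[L((1-\lambda^*)I) - L(I)]$ by splitting the sum according to $\mathcal{C}^c$ versus $\mathcal{C}$. For inliers $i \in \mathcal{C}^c$, $y_i = x_i$ gives $\|(1-\lambda^*)x_i - y_i\|_2 - \|x_i - y_i\|_2 = \lambda^*$ exactly. For outliers $i \in \mathcal{C}$, I would apply the upper bound in \cref{lemma:bound difference norms} with $u = -\lambda^* x_i$ and $v = x_i - y_i$, and use $\|x_i\|_2 = 1$ to simplify the denominator, which yields
\begin{align*}
\|(1-\lambda^*)x_i - y_i\|_2 - \|x_i - y_i\|_2 \leq -\lambda^*\frac{x_i^\top(x_i - y_i)}{\|x_i - y_i\|_2} + \frac{(\lambda^*)^2}{2\sqrt{\|x_i - y_i\|_2^2 - (x_i^\top(x_i-y_i))^2}}.
\end{align*}

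Next I would evaluate the two resulting spherical expectations. Because $\|x\|_2 = \|y\|_2 = 1$, we have $x^\top(x-y) = \|x-y\|_2^2/2$, so the linear term collapses to $\E[x^\top(x-y)/\|x-y\|_2] = \E\|x-y\|_2/2 = B(d-1,1/2)/B((d-1)/2,1/2)$ by the same spherical-moment identity used in the proof of \cref{lemma:pop_suc_cvx}. For the quadratic error, I would parametrize via $t := x^\top y$, whose density on $[-1,1]$ equals $(1-t^2)^{(d-3)/2}/B((d-1)/2,1/2)$; the short computation $\|x-y\|_2^2 - (x^\top(x-y))^2 = 2(1-t) - (1-t)^2 = 1 - t^2$ then reduces this expectation to $\E[(1-t^2)^{-1/2}] = B((d-2)/2,1/2)/B((d-1)/2,1/2)$.

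Putting the pieces together produces
\begin{align*}
\E[L((1-\lambda^*)I) - L(I)] \leq \lambda^*\left((1-p) - p\,\frac{B(d-1,1/2)}{B((d-1)/2,1/2)}\right) + \frac{p(\lambda^*)^2}{2}\,\frac{B((d-2)/2,1/2)}{B((d-1)/2,1/2)}.
\end{align*}
The defining identity $B(d-1,1/2)/B((d-1)/2,1/2) = 1/\tilde{p}(d) - 1$ from \eqref{eq: definition p tilde2} collapses the bracketed coefficient to $-(p-\tilde{p}(d))/\tilde{p}(d)$. The specific $\lambda^*$ prescribed in the statement is calibrated so that $p\lambda^*\,B((d-2)/2,1/2)/B((d-1)/2,1/2) = (p-\tilde{p}(d))/\tilde{p}(d)$, making the quadratic term exactly half the magnitude of the (negative) linear one and leaving the claimed bound $-\lambda^*(p-\tilde{p}(d))/(2\tilde{p}(d))$. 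The side condition $\lambda^* \leq 1$ follows from a routine monotonicity estimate on the ratio $B((d-1)/2,1/2)/B((d-2)/2,1/2)$ together with $p \leq 1$.

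The main obstacle is recognizing the clean identity $\|x-y\|_2^2 - (x^\top(x-y))^2 = 1 - t^2$ under the $t = x^\top y$ parametrization: without it the quadratic error from \cref{lemma:bound difference norms} would not reduce to a standard beta integral. Once that identity is in hand, everything else is bookkeeping, and $\lambda^*$ is transparently the minimizer of the resulting linear-plus-quadratic upper bound in $\lambda$.
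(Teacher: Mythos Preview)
Your proposal is correct and follows essentially the same route as the paper. The only cosmetic difference is that the paper packages the quadratic denominator as $\|x_i-y_i\|_2\|x_i+y_i\|_2$ (noting $\|x-y\|_2\|x+y\|_2 = 2\sqrt{1-t^2}$) and quotes the expectation $\E[(\|x-y\|_2\|x+y\|_2)^{-1}]$ from an appendix lemma, whereas you reduce it directly to $\sqrt{1-t^2}$ via your identity and integrate against the density of $t=x^\top y$; both computations are the same beta integral.
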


\begin{proof} 
  We want to show $(1-\lambda)I$ attains better cost than $I$ for some $\lambda>0$. To this end, we
  find an upper bound for $\E[L((1-\lambda)I)-L(I)]$ for any $\lambda>0$ and then we get
  $\lambda^{*}$ that minimizes this upper bound. By the definition of $L(\cdot)$, we get
  \begin{align}\label{eq:L_lst}
    L((1-\lambda)I) - L(I)&= (1-p)\lambda +\frac{1}{N}\sum_{i\in\mathcal{C}} \|-\lambda x_i +
    x_i-y_i\|_{2}-\|x_i-y_i\|_{2}.
  \end{align}
  \cref{lemma:bound difference norms} provides the upper bound for $i\in\mathcal{C}$
  \begin{align}
    \|-\lambda x_i + x_i-y_i\|_{2}-\|x_i-y_i\|_{2}\leq -\frac{\lambda}{2} \|x_{i}-y_{i}\|_{2} +
    \lambda^2\frac{1}{\|x_i-y_i\|_{2}\|x_i+y_i\|_{2}}.
  \end{align}
  
  Although $\|x_i-y_i\|_{2}$ or $\|x_i+y_i\|_{2}$ may be zero, if $d\geq 3$, \cref{lemma:exp_x_y
    sphere} states
  \begin{align}
	\E \|x_i-y_i\|_{2}
    \ =\ 2\frac{B\left(d-1,\frac{1}{2}\right)}{B\left(\frac{d-1}{2},\frac{1}{2}\right)},\quad
    \E\left[
      \frac{1}{\|x_i-y_i\|_{2}\|x+y\|_{2}}\right]\ =\ \frac{B\left(\frac{d-2}{2},\frac{1}{2}\right)}{2B\left(\frac{d-1}{2},\frac{1}{2}\right)}.
  \end{align}
  Therefore for all $\lambda>0$,
  \begin{align}
	\label{eq: bound for all lambda}
	\E[ L((1-\lambda)I) - L(I)]&\leq -\lambda\frac{(p-\tilde{p}(d))}{\tilde{p}(d)} +
    \frac{p\lambda^2}{2}\frac{B\left(\frac{d-2}{2},\frac{1}{2}\right)}{B\left(\frac{d-1}{2},\frac{1}{2}\right)}.
	\end{align}
  In particular, taking $\lambda =\lambda^*$ as defined in \eqref{eq:def_l_str} the RHS of \eqref{eq: bound for
    all lambda} is minimized. Moreover, since the Beta function satisfies
   \begin{align}
    B(q,1/2)\leq B(r,1/2) \text{ if } q\geq r\geq 1/2,
   \end{align}
then $\lambda^*<1$. Given that $0,I\in\mathrm{conv}\, \SO(d)$, then
  $(1-\lambda^*)I\in\mathrm{conv}\, \SO(d)$.
\end{proof}

\begin{corollary}[Failure of \cref{prob:conv2} in expectation, Part 2]\label{cor:pop_fail_cvx}
  Let $L(\cdot)$ the LUD as defined in \eqref{eq:LUDdef}, $\lambda^*$ as defined in
  \eqref{eq:def_l_str} and $p>\tilde{p}(d)$. For all $A\in\mathbb{R}^{d\times d}$ such that
  $\|A-I\|_{F} \leq \sqrt{d}\lambda^{*}/4$ then
  \begin{align}
	\E[ L(A) - L((1-\lambda^*)I)] \geq \frac{p-\tilde{p}(d)}{\tilde{p}(d)}\frac{\lambda^*}{4}>0.
  \end{align}
\end{corollary}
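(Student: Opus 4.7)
The plan is to write $\E[L(A)]-\E[L((1-\lambda^*)I)]$ as a telescoping difference around $L(I)$, namely
\begin{align*}
\E[L(A)] - \E[L((1-\lambda^*)I)] \;=\; \bigl(\E[L(A)] - \E[L(I)]\bigr) \;-\; \bigl(\E[L((1-\lambda^*)I)] - \E[L(I)]\bigr),
\end{align*}
and then bound each piece separately using the two results already proven: \cref{lemma:pop_suc_cvx} for the first term and \cref{lemma:pop_fail_cvx} for the second.

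For the first term, \cref{lemma:pop_suc_cvx} gives
$\E[L(A)] - \E[L(I)] \geq \frac{\|A-I\|_*}{d}\frac{\tilde{p}(d)-p}{\tilde{p}(d)}$,
which is negative since $p>\tilde{p}(d)$. The key observation is that the Frobenius-ball constraint $\|A-I\|_F \leq \sqrt{d}\lambda^*/4$ prevents this negative contribution from being too large. Indeed, the standard matrix inequality $\|A-I\|_* \leq \sqrt{d}\,\|A-I\|_F$ (since $A-I$ has rank at most $d$) yields $\|A-I\|_* \leq d\lambda^*/4$, and hence
\begin{align*}
\E[L(A)] - \E[L(I)] \;\geq\; -\frac{\lambda^*}{4}\,\frac{p-\tilde{p}(d)}{\tilde{p}(d)}.
\end{align*}

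For the second term, \cref{lemma:pop_fail_cvx} directly supplies
$\E[L((1-\lambda^*)I)] - \E[L(I)] \leq -\frac{\lambda^*}{2\tilde{p}(d)}(p-\tilde{p}(d))$,
so flipping the sign gives a positive contribution of at least $\frac{\lambda^*}{2\tilde{p}(d)}(p-\tilde{p}(d))$. Adding the two bounds produces
\begin{align*}
\E[L(A)] - \E[L((1-\lambda^*)I)] \;\geq\; \frac{\lambda^*(p-\tilde{p}(d))}{\tilde{p}(d)}\left(\frac{1}{2}-\frac{1}{4}\right) \;=\; \frac{p-\tilde{p}(d)}{\tilde{p}(d)}\,\frac{\lambda^*}{4},
\end{align*}
which is exactly the desired inequality. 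The case $A=I$ reduces trivially to \cref{lemma:pop_fail_cvx} alone.

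There is essentially no hard obstacle here: the calculation is an arithmetic combination of two previously established expectation bounds, and the only substantive ingredient is calibrating the radius $\sqrt{d}\lambda^*/4$ so that the nuclear-Frobenius comparison $\|A-I\|_* \leq \sqrt{d}\|A-I\|_F$ absorbs only half of the improvement $\frac{\lambda^*}{2\tilde{p}(d)}(p-\tilde{p}(d))$ guaranteed by the $(1-\lambda^*)I$ direction, leaving the other half as the advertised strict gap.
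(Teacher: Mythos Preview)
Your proof is correct and is exactly the argument the paper has in mind: the paper's own proof of \cref{cor:pop_fail_cvx} is a one-line ``Follows directly from \cref{lemma:pop_suc_cvx} and \cref{lemma:pop_fail_cvx},'' and you have simply written out that combination explicitly, including the nuclear--Frobenius comparison $\|A-I\|_*\le\sqrt{d}\|A-I\|_F$ that calibrates the radius.
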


\begin{proof}
  Follows directly from \cref{lemma:pop_suc_cvx} and \cref{lemma:pop_fail_cvx}.
\end{proof}

\cref{cor:pop_fail_cvx} implies that when $N\rightarrow\infty$, any $A^*$ minimizer of
\cref{prob:conv2} satisfies $\|A^{*}-I\|_{F}> \lambda^*\sqrt{d}/4 = O(p-\tilde{p}(d))$.

We now prove \cref{thm:imp. recovery}.
  \begin{proof}[Proof of \cref{thm:imp. recovery}]
  \cref{lemma:pop_fail_cvx} and \eqref{eq:L_lst} imply that
  \begin{align}\label{eq:bound_L_st}
	L(I) - L((1-\lambda^{*})I)\geq \frac{\lambda^*}{2\tilde{p}}\left(p-\tilde{p}\right)
    -\frac{1}{N}\sum_{i\in\mathcal{C}} (g(x_{i},y_{i},\lambda^{*}) -\E g(x_{i},y_{i},\lambda^{*}) )
  \end{align}
  where $g(x_{i},y_{i},\lambda^{*}) := \|(1-\lambda^{*}) x_i-y_i\|_{2}-\|x_i-y_i\|_{2} $. By triangle inequality,
  \begin{align}
	\max_{x,y\in\mathcal{S}^{d-1}}\left\vert \|(1-\lambda^{*})x-y\|_2 - \|x-y\|_2\right\vert \leq \max_{x\in\mathcal{S}^{d-1}}\|\lambda^{*}x\|_2=\lambda^*.
  \end{align}
  Therefore \eqref{eq: bounded subgaussian norm} implies that $\|g(x_i,y_i,\lambda^{*})\|_{\psi_2}
  \leq (\log2)^{-1/2} \lambda^*$ for all $i\in\mathcal{C}$. Using Hoeffding's inequality
  (\cref{theorem:hoeffding}) we get
  \begin{equation}
	\label{eq:res2 estimate imposs}
	\left \vert\sum_{i\in\mathcal{C}}\left[g(x_i,y_i,\lambda^*) -
      \mathbb{E}g(x_i,y_i,\lambda^*)\right]\right \vert \leq c\lambda^{*}\sqrt{N\log N}
  \end{equation}
  with high probability. \eqref{eq:res2 estimate imposs} implies that the LHS of \eqref{eq:bound_L_st}
  is bounded by
  \begin{align}
	L(I) - L((1-\lambda^{*})I)\geq \frac{\lambda^*}{2\tilde{p}(d)}\left(p-\tilde{p}(d)
    -O\left(\sqrt{\frac{\log N}{N}}\right)\right).
  \end{align}
  
  From \eqref{eq:lower_bound_LA} in the proof of \cref{thm:sharpness convex}, we have with high
  probability, for all $A\in\mathbb{R}^{d\times d}$
  \begin{align}
	L(A)-L(I)&\geq -\frac{\|A-I\|_{F}}{\sqrt{d}\ \tilde{p}(d)}\left(p-\tilde{p}(d) +
    O\left(\sqrt{d\frac{(d^2+\log N)}{N}} \right)\right),
  \end{align}
  since $\|A-I\|_{*}\leq\sqrt{d}\|A-I\|_{F}$. Hence, for all $A\in\mathbb{R}^{d\times d}$ such that
  $\|A-I\|_{F} \leq \sqrt{d}\frac{\lambda^{*}}{4}$,
  \begin{align}
    L(A) - L((1-\lambda^*)I) \geq\frac{\lambda^{*}}{4} \left(p-\tilde{p}(d) +
    O\left(\sqrt{d\frac{(d^2+\log N)}{N}} \right)\right),
  \end{align} 
  with high probability. Therefore, if $p  >\tilde{p}(d)+O\left(\sqrt{d(d^2+\log N){N}^{-1}}\right)$, then
  \begin{align}
    L(A)>L((1-\lambda^*)I)
  \end{align}
  with high probability. Hence if $A^*$ is a minimizer of \cref{prob:conv2}, then $\|A^{*}-I\|_{F}>
  \lambda^*\sqrt{d}/4 = O(p-\tilde{p}(d))$.
\end{proof}

\section{Finite time dynamical systems in $SO(d)$}\label{sec:fin time SOd}
To prove \cref{thm:converg. nonconvex}, we want to show the dynamical system \eqref{eq:dyn_sysLUD}
converges in finite time, i.e. there exists $T<\infty$ such that $R(t) = I$ for all $t>T$. In this
section, we characterize the sufficient conditions for finite time convergence of any dynamical
system in $\SO(d)$ of the form
\begin{align}
\label{eq: DI in SO}
\frac{dR}{dt}\in R\cdot \mathcal{S}(R),\quad \mathcal{S}(R)\subseteq \mathcal{S}_{\mathrm{skew}}(d)
\end{align}
in terms of the dynamics of $\|\log R(t)\|_{2}$. We start with an useful characterization of
$R\in\SO(d)$ in terms of the principal angles of rotation. We denote the skew-symmetric matrix and
the identity matrix in $\mathbb{R}^{2\times 2}$
\begin{align}
  A_{2} := \left[\begin{array}{cc}
      0 & -1\\
      1& 0
    \end{array}\right],\quad I_{2} := \left[\begin{array}{cc}
      1 & 0\\
      0& 1
    \end{array}\right],
\end{align}
respectively.
\begin{lemma}[Planar decomposition in $\SO(d)$]\label{lemma: planar decomposition R}
  Any $R\in\SO(d)$ can be written as  
  \begin{align}
	\label{eq: planar decomposition R}
	R - I = \sum_{i=1}^{\left\lfloor d/2 \right\rfloor}U_{i}(R_{\sigma_{i}} - I_{2})
    U_{i}^\top,\quad R_{\theta} := \left[\begin{array}{cc} \cos\theta& -\sin\theta\\ \sin\theta&
        \cos\theta
	  \end{array}\right]
  \end{align}  
  with $\pi\geq\sigma_{1}\geq\dots\geq\sigma_{\left\lfloor d/2 \right\rfloor}\geq 0 $ and
  $U=\left[\begin{array}{cccc} U_1 & U_2 & \dots & U_{\left\lfloor d/2 \right\rfloor}
	\end{array}\right]$ such that $U^\top U = I$  and $U_i\in\mathbb{R}^{d\times 2}$.
\end{lemma}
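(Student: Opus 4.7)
The plan is to reduce to the real canonical form for orthogonal matrices. Since $R\in\SO(d)$ is orthogonal (hence normal), its complex eigenvalues lie on the unit circle and occur in complex conjugate pairs $e^{\pm i\sigma}$ with $\sigma\in(0,\pi)$, together with possibly real eigenvalues $\pm 1$. The constraint $\det R=1$ forces the number of $-1$ eigenvalues to be even, so they can be grouped into pairs and treated exactly as a complex conjugate pair with $\sigma=\pi$. Similarly, $+1$ eigenvalues can be paired (and a single leftover $+1$ eigenvalue will remain when $d$ is odd).

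Concretely, I would invoke the real Schur decomposition: there is an orthogonal $Q\in\mathbb{R}^{d\times d}$ such that $Q^\top R Q$ is block diagonal with $2\times 2$ blocks of the form $R_{\sigma_i}$ (for each conjugate pair of eigenvalues $e^{\pm i\sigma_i}$, after grouping paired $\pm 1$'s as described), followed by at most one scalar $1$ on the diagonal in the case of odd $d$. Write $Q=[U_1\mid U_2\mid\cdots\mid U_{\lfloor d/2\rfloor}\mid q]$, where each $U_i\in\mathbb{R}^{d\times 2}$ corresponds to one $2\times 2$ block and $q\in\mathbb{R}^{d}$ (present only when $d$ is odd) corresponds to the lone $+1$ eigenvalue. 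By construction $R U_i=U_i R_{\sigma_i}$ and, when present, $R q=q$.

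Now I would assemble the decomposition. Since $Q Q^\top=I_d$,
\begin{align}
I_d=\sum_{i=1}^{\lfloor d/2\rfloor}U_i U_i^\top + q q^\top,\qquad
R=\sum_{i=1}^{\lfloor d/2\rfloor}U_i R_{\sigma_i} U_i^\top + q q^\top,
\end{align}
with the $qq^\top$ terms absent when $d$ is even. Subtracting gives
\begin{align}
R-I=\sum_{i=1}^{\lfloor d/2\rfloor}U_i\bigl(R_{\sigma_i}-I_2\bigr)U_i^\top,
\end{align}
which is exactly \eqref{eq: planar decomposition R}. The block ordering can be chosen so that $\pi\geq\sigma_1\geq\cdots\geq\sigma_{\lfloor d/2\rfloor}\geq 0$ by permuting the $U_i$'s, and orthonormality $U^\top U=I$ follows from $Q^\top Q=I_d$. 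If after the pairing step the number of 2D blocks is less than $\lfloor d/2\rfloor$ (this happens when the $+1$-eigenspace has dimension at least $2$), I would pad by picking orthonormal pairs from that eigenspace and assigning $\sigma_i=0$; these contribute $R_{0}-I_2=0$ to the sum and therefore do not affect the identity.

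The main obstacle is not technical depth but bookkeeping: one must handle odd versus even $d$ and the $\pm 1$ eigenvalues carefully so that the $U_i$ are orthonormal and the count of blocks is exactly $\lfloor d/2\rfloor$. Once the real canonical form is in hand, the rest is a direct computation.
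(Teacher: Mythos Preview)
Your argument is correct, but it takes a different route than the paper. The paper works on the Lie-algebra side: it writes the principal logarithm $\log R\in\pi\mathcal{B}_{\mathrm{skew}}(d)$, takes its SVD $\log R=\sum_i \sigma_i U_iA_2U_i^\top$, and then exponentiates blockwise using $\exp(\theta A_2)=R_\theta$ to obtain \eqref{eq: planar decomposition R}. You instead stay on the group side, applying the real Schur (canonical) form of the orthogonal matrix $R$ directly and doing the bookkeeping for $\pm 1$ eigenvalues and odd $d$ by hand. Both are short and standard. Your approach is slightly more elementary since it avoids the matrix logarithm altogether; the paper's approach has the advantage that the identification $\sigma_1=\|\log R\|_2$ and, more generally, that the $\sigma_i$ are precisely the singular values of $\log R$, is built in from the start, which is exactly what the subsequent lemmas on the dynamics of $\|\log R(t)\|_2$ rely on. With your proof that identification is still true but would require one extra remark (namely that $\log R=\sum_i \sigma_i U_iA_2U_i^\top$ follows from your block form by taking $\log$ blockwise).
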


\begin{proof}
  For any rotation $R$, we can define the principal logarithm such that $\log R\in\pi\mathcal{B}_{\mathrm{skew}}(d)$, as defined in \eqref{eq:def_B_skew}. Therefore, the SVD decomposition of $\log R$ satisfies
  \begin{align}
	\log R = \sum_{i=1}^{\left\lfloor d/2 \right\rfloor} \sigma_{i} U_{i} A_{2} U_{i}^\top
  \end{align}
  for $\pi\geq\sigma_{1}\geq\dots\geq\sigma_{\left\lfloor d/2 \right\rfloor}\geq 0 $ and
  \[
  U=\left[\begin{array}{cccc} U_1 & U_2 & \dots & U_{\left\lfloor d/2 \right\rfloor}
	\end{array}\right]
  \]
  such that $U^\top U = I$. Notice that for any $\theta\in[0,\pi]$,
  \begin{align}
	\exp(\theta \cdot A_2) = \cos(\theta) I_{2} + \sin(\theta)A_{2} = R_{\theta}.
  \end{align}
  Then, using $R=\exp(\log R)$, we get decomposition \eqref{eq: planar decomposition R} for $R$.
\end{proof}

\cref{lemma: planar decomposition R} provides a decomposition of any rotation of dimension $d$ as a
sum of planar rotations in orthogonal planes. Notice that if $d$ is odd, \eqref{eq: planar
  decomposition R} implies that $R$ has at least one invariant direction. In addition,
$\sigma_{1} = \|\log R\|_{2}$ characterizes the largest angle of rotation of $R$. Furthermore, $\|\log R\|_{2}$ provides the following bounds:
\begin{align}
  \|\log R\|_{2}\leq D_{\SO}(R,I)\leq \sqrt{d}\|\log R\|_{2}.
\end{align}
where $D_{\SO}(R,I)$ denotes the Riemannian distance between $R$ and $I$ as defined in \eqref{eq:riemannian_distance}.

Now we characterize the rate of change of $\|\log R(t)\|_2$ assuming that $R(t)$ follows the dynamics 
\eqref{eq: DI in SO}.

\begin{lemma}[Dynamics of $\|\log R\|_{2}$]\label{lemma:dyn_SO}
  Let $R(t)\in SO(d)$ be a solution of the differential inclusion \eqref{eq: DI in SO}, then $\|\log
  R(t)\|_{2}$ is absolutely continuous and satisfies
  \begin{align}
	\frac{d\|\log R(t)\|_{2}}{dt} \in \{a\mid \exists S \in\mathcal{S}(R(t)), \langle
    S,UA_{2}U^\top \rangle = 2a\ \forall U\in \mathcal{U}(R(t))\}\
  \end{align}
  almost everywhere, where
  \begin{align}
	\label{eq:def UR}
	\mathcal{U}(R):=\left\{U\in\mathbb{R}^{d\times 2}\mid U^\top U = I_2,\quad U^\top (\log R)U = \|\log R\|_{2}A_{2}\right\}.
  \end{align}
\end{lemma}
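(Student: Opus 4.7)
The plan is to express $\|\log R(t)\|_{2}$ through a variational principle and combine an envelope theorem argument with a commutator identity special to the top singular plane of $\log R$. Starting from the planar decomposition of \cref{lemma: planar decomposition R}, for any skew-symmetric $M = \log R$ one has the representation
\begin{equation}
\|M\|_{2} \;=\; \max_{U \in \mathbb{R}^{d\times 2},\; U^\top U = I_2}\; \tfrac{1}{2}\langle M, U A_2 U^\top\rangle,
\end{equation}
with maximizer set exactly $\mathcal{U}(R)$; this follows from the identity $\langle M, U A_2 U^\top\rangle = 2c$ whenever $U^\top M U = c A_2$, together with $|c| \le \sigma_1(M)$ and analysis of the equality case via the planar decomposition. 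Absolute continuity of $\|\log R(t)\|_{2}$ is then immediate: $R(t)$ is absolutely continuous as a solution of the differential inclusion, the principal logarithm is smooth on $\{R \in \SO(d) : \|\log R\|_{2} < \pi\}$ which is the regime of interest, and the spectral norm is Lipschitz, so $t \mapsto \|\log R(t)\|_{2}$ is absolutely continuous and differentiable almost everywhere.

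The core technical step is the commutator identity $[M, U A_2 U^\top] = 0$ for every $U \in \mathcal{U}(R)$, which I would derive from the stronger relation $M U = \|M\|_{2}\, U A_2$. The variational characterization forces $U$ to lie in the top singular subspace of $M$, so $\|M U\|_F^2 = 2\sigma_1^2$; decomposing $M U = \sigma_1 U A_2 + W$ with $W^\top U = 0$ and applying a Pythagorean orthogonality argument yields $\|W\|_F = 0$. Combined with $M^\top = -M$ and $A_2^\top = -A_2$ this also gives $U^\top M = \sigma_1 A_2 U^\top$, and then $[M, U A_2 U^\top]$ collapses via $A_2^2 = -I_2$.

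With these ingredients, at any $t$ where $\|M(t)\|_{2}$ is differentiable (almost every $t$, by Rademacher's theorem), Danskin's theorem applied to the convex function $\|\cdot\|_2$ yields
\begin{equation}
\frac{d}{dt}\|M(t)\|_{2} \;=\; \tfrac{1}{2}\langle U A_2 U^\top, \dot M(t)\rangle \quad \text{for every } U \in \mathcal{U}(R(t)),
\end{equation}
and differentiability forces this value to be common across maximizers. To replace $\dot M$ by the driving term $S \in \mathcal{S}(R(t))$ satisfying $\dot R = R S$, I would use the standard matrix-logarithm derivative identity $\dot M = \frac{\mathrm{ad}_M}{1 - e^{-\mathrm{ad}_M}}(S)$, valid on $\{\|M\|_2 < \pi\}$ since $\mathrm{ad}_M$ has no eigenvalue in $2\pi i \mathbb{Z}\setminus\{0\}$. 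Because $\mathrm{ad}_M$ is anti-self-adjoint under the Frobenius inner product for skew $M$, transposing this operator replaces it with the analytic function $z \mapsto -z/(1-e^{z})$, which equals $1$ at $z = 0$; applied to $U A_2 U^\top \in \ker(\mathrm{ad}_M)$ it acts as the identity, yielding $\langle U A_2 U^\top, \dot M\rangle = \langle U A_2 U^\top, S\rangle$ and hence the advertised formula with $a = \tfrac{1}{2}\langle U A_2 U^\top, S\rangle$.

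The main obstacle I anticipate is the eigenvalue-multiplicity regime, where $\mathcal{U}(R)$ is a continuous family of maximizers: the identity $M U = \sigma_1 U A_2$ must then be established uniformly across this family using the structure of the full top singular subspace rather than just one $U$, and Danskin's theorem must be invoked carefully to ensure the common derivative value holds simultaneously for every $U \in \mathcal{U}(R(t))$.
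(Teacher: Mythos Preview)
Your approach is correct and arrives at the same conclusion as the paper, but via a genuinely different route. The paper differentiates the singular values directly: it writes the SVD $\log R(t)=\sum_j\sigma_j(t)U_j(t)A_2U_j(t)^\top$, uses the integral representation $\frac{d}{dt}\log A=\int_0^1[s(A-I)+I]^{-1}\dot A\,[s(A-I)+I]^{-1}\,ds$, and then computes $U_i^\top(\frac{d}{dt}\log R)U_i$ explicitly from the planar decomposition of $R$, exploiting that $A_2$ commutes with every element of $\SO(2)$ to collapse the integral. This feeds into the standard perturbation formula $\dot\Sigma=I\odot(U^\top\dot M V)$ to get $\dot\sigma_i=\tfrac12\langle S,U_iA_2U_i^\top\rangle$, and the generalized gradient of the max handles multiplicity. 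Your argument replaces the integral computation by the Lie-theoretic identity $\dot M=\tfrac{\mathrm{ad}_M}{1-e^{-\mathrm{ad}_M}}(S)$ together with the key observation $[M,UA_2U^\top]=0$, so that the entire power series in $\mathrm{ad}_M$ acts as the identity on $UA_2U^\top$ and $\langle UA_2U^\top,\dot M\rangle=\langle UA_2U^\top,S\rangle$ drops out immediately; Danskin replaces the SVD perturbation formula. The two cancellations are really the same phenomenon (the top plane is $\mathrm{ad}_M$-invariant), but your packaging makes the mechanism more transparent and avoids the somewhat delicate integral evaluation. The paper's version, on the other hand, is more self-contained: it does not invoke Danskin or the $\mathrm{ad}$-calculus, and it sidesteps the need to argue separately that every $U\in\mathcal U(R)$ satisfies $MU=\|M\|_2UA_2$ in the multiplicity case, since it works with the fixed SVD basis $\{U_j\}$ throughout and only takes the convex hull at the end. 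One small point: your derivation of $MU=\sigma_1UA_2$ via the Pythagorean splitting is fine, but the cleaner route is the equality case of $|u_2^\top Mu_1|\le\|Mu_1\|_2\le\sigma_1$, which simultaneously forces $u_1$ into the top singular space and $u_2=Mu_1/\sigma_1$.
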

\begin{proof} See \cref{sec: app_proof_dyn}.
\end{proof}

Ultimately, we want to show $\|\log R(t)\|_{2}$ converges to zero as $t$ increases. A sufficient
condition for this is summarized in the the following lemma.
\begin{lemma}[Finite-time convergence in $\SO(d)$]\label{lemma:conv dyn sys SO}
  Let $R(t)\in SO(d)$ be a solution of the differential inclusion \eqref{eq: DI in SO}
  with initial condition $R(0)$. Suppose there exist $g\in\mathcal{C}^{1}[0,\pi]$ such that 
  \begin{align}\label{eq:conv_dyn_SO_1}
    \max_{S\in\mathcal{S}(R(t)),U\in\mathcal{U}(R(t))} \langle S,UA_{2}U^\top \rangle  \leq -2g(\|\log R(t)\|_{2})
  \end{align} 
  holds for $R(t)\neq I$ almost everywhere, $g(s)>0$ for $0\leq s\leq \|\log R(0)\|_{2}$ and
  $\mathcal{U}(R(t))$ as defined in \eqref{eq:def UR}. Then for all $t\geq T(\|\log R(0)\|_{2})$,
  $R(t)=I$, where
  \begin{align}\label{eq:conv_dyn_SO_4}
	T(s) := \int_{0}^{s}\frac{1}{g(u)}du < \infty.
  \end{align}
\end{lemma}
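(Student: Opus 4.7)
The plan is to use $\phi(t):=\|\log R(t)\|_{2}$ as a Lyapunov function, extract from hypothesis \eqref{eq:conv_dyn_SO_1} the one-dimensional differential inequality $\dot\phi(t)\leq -g(\phi(t))$, and then conclude by a separation-of-variables argument through the primitive $T$. The strategy has three steps: reducing the differential inclusion to a scalar inequality via Lemma 4.1; integrating through $T$ to bound the first hitting time of $0$; and establishing forward invariance of the identity once it is reached.

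First, Lemma 4.1 tells us $\phi$ is absolutely continuous and, a.e.\ on $\{t:R(t)\neq I\}$, $\dot\phi(t)$ equals some scalar $a$ for which there exists $S\in\mathcal{S}(R(t))$ with $\langle S,UA_{2}U^{\top}\rangle=2a$ for every $U\in\mathcal{U}(R(t))$. The hypothesis bounds $\langle S,UA_{2}U^{\top}\rangle\leq -2g(\phi(t))$ for every admissible $(S,U)$; combined with the previous identity this gives $\dot\phi(t)\leq -g(\phi(t))$ a.e.\ on $\{\phi>0\}$. Because $g\in C^{1}([0,\pi])$ is strictly positive on $[0,\phi(0)]$, the primitive $T(s)=\int_{0}^{s}du/g(u)$ is $C^{1}$, strictly increasing on this interval, with $T(0)=0$ and $T(\phi(0))<\infty$. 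Absolute continuity of $\phi$ and local Lipschitzness of $1/g$ on a neighborhood of the range of $\phi$ let the chain rule give $\frac{d}{dt}T(\phi(t))\leq -1$ a.e.\ while $\phi(t)>0$; integrating over any such interval yields $T(\phi(t))\leq T(\phi(0))-t$. Since $T\geq 0$, the first hitting time $t^{*}:=\inf\{t\geq 0:\phi(t)=0\}$ therefore satisfies $t^{*}\leq T(\|\log R(0)\|_{2})$, and continuity of $\phi$ gives $R(t^{*})=I$.

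It remains to show that any absolutely continuous solution stays at $I$ for $t\geq t^{*}$. Suppose for contradiction that $\phi(t_{1})>0$ at some $t_{1}>t^{*}$. By continuity of $\phi$, there is a largest $t_{0}\in[t^{*},t_{1})$ with $\phi(t_{0})=0$, and $\phi>0$ on $(t_{0},t_{1}]$. Applying the scalar inequality and the integrated bound of the previous step on this sub-interval gives $T(\phi(t_{1}))\leq T(\phi(t_{0}))-(t_{1}-t_{0})=-(t_{1}-t_{0})<0$, contradicting $T\geq 0$. Hence $R(t)=I$ for all $t\geq t^{*}$, and combining with $t^{*}\leq T(\|\log R(0)\|_{2})$ finishes the proof.

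The main obstacle I anticipate is this invariance step, since differential inclusions generically admit multiple forward continuations from an equilibrium and uniqueness cannot be invoked directly. The argument above sidesteps this by applying the integrated inequality on any sub-interval where $\phi$ is positive; the fact that $g(0)>0$ makes the bound strong enough to preclude $\phi$ from ever re-entering $\{\phi>0\}$. A secondary technical point is the chain-rule justification for $t\mapsto T(\phi(t))$, which follows from absolute continuity of $\phi$ (provided by Lemma 4.1) together with Lipschitz continuity of $T$ on the range of $\phi$.
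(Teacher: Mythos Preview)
Your proposal is correct and follows essentially the same route as the paper: both use Lemma~4.1 to reduce to the scalar inequality $\dot\phi\leq -g(\phi)$ for $\phi(t)=\|\log R(t)\|_{2}$, then integrate via separation of variables to bound the hitting time by $T(\phi(0))$. The only cosmetic difference is that the paper first argues $\dot\phi\in\{0\}$ at $R=I$ (from $\mathcal{U}(I)$ being all orthonormal $2$-frames) to get global monotonicity of $\phi$ and hence forward invariance of $\{I\}$, whereas you obtain forward invariance by re-running the integrated bound on a hypothetical excursion interval; both arguments are equally short and rely on $g(0)>0$.
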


\begin{proof}
  \cref{lemma:dyn_SO} implies that $\|\log R(t)\|_{2}$ is absolutely continuous, i.e.
  \begin{align}\label{eq:abs_cont}
    \|\log R(t)\|_{2} &=\|\log R(0)\|_{2}+\int_{0}^{t}\frac{d}{d\tau} \|\log R(\tau)\|_{2} d\tau
  \end{align}
  and if $\frac{d\|\log R(t)\|_{2}}{dt}$ exists when $R(t) = I$ then 
  \begin{align}
    \frac{d\|\log R(t)\|_{2}}{dt} \in \{a\mid \exists S \in\mathcal{S}(t), \langle S,UA_{2}U^\top
    \rangle = 2a\ \forall U\in \mathcal{U}(I)\}\ \subseteq\{0\}.
  \end{align}
  Therefore, condition \eqref{eq:conv_dyn_SO_1} and $g(s)>0$ for $0\leq s\leq \|\log R(0)\|_{2}$ imply
  \begin{align}
    \|\log R(t_2)\|_{2} \leq \|\log R(t_1)\|_{2}\text{ for }t_{1}\leq t_{2},\label{eq:conv_dyn_SO_3}
  \end{align} 
  and we have strict inequality when $R(t_1)\neq I$. In particular, if there exists $T$ such that
  $R(T)=I$, then $R(t)=I$ for all $t\geq T$.
  
  When the initial condition is not $I$, then \eqref{eq:conv_dyn_SO_3} implies that for $\|\log
  R(0)\|_{2}>0$, we can upper bound the minimum time $\tau$ such that $\|\log R(\tau)\|_{2} =0$ by
  \begin{align}
    \tau = \int_{0}^{\tau} ds \leq \int_{0}^{\|\log R(0)\|_{2}}\frac{1}{g(u)}du =: T(\|\log R(0)\|_{2}).
  \end{align}
  Then, $R(t)=I$ for $t\geq T(\|\log R(0)\|_{2})$ as defined in  \eqref{eq:conv_dyn_SO_4}.
\end{proof}

In the rest of the paper, we will show finite time convergence of $R(t)$ to $I$ by checking the
condition in \cref{lemma:conv dyn sys SO}.


\section{Proof of \cref{thm:converg. nonconvex}: Exact recovery via non-convex optimization}\label{sec:non-convex recovery}
We examine exact recovery via non-convex optimization for solving \cref{prob:LUD}, i.e. by explicitly
restricting the transformation between the two point clouds to lie on $\SO(d)$. As mentioned previously,
we take a dynamical system view-point of generalized gradient descent on the manifold $\SO(d)$ by
considering the Riemannian gradient flow
\begin{equation}
\label{eq:dyn. system}
\frac{d R}{dt}(t) \in - R(t)\cdot\mathrm{skew}(R(t)^\top \bar{\partial} L(R(t)))\subset
T_{R(t)}\SO(d)
\end{equation}
where $\bar{\partial} L(R) $ is the Euclidean generalized gradient of $L(\cdot)$ given by 
\begin{align}\label{eq:subgradient2}
  \bar{\partial} L(R) := \frac{1}{N} \sum_{i\in\mathcal{C}^{c}} \alpha_i(R) x_i^T + \frac{1}{N}
  \sum_{i\in\mathcal{C}} \beta_i(R) x_i^T,
\end{align}
and $\alpha_i$'s and $\beta_i$'s are defined as
\begin{align} 
  \label{def: alphai}
  \text{for }i\in\mathcal{C}^{c},\quad \alpha_i(R) &:=
  \begin{cases}
    \left\{\frac{Rx_i-x_i}{\|Rx_i-x_i\|_2}\right\}, & \text{if}\ Rx_i-x_i\neq 0 \\
    \left\{\theta_i\in\mathbb{R}^d\mid \|\theta_i\|_2\leq 1\right\}, & \text{otherwise};
  \end{cases}\\
  \label{def: betai}
  \text{for }i\in\mathcal{C},\quad \beta_i(R) &:=
  \begin{cases}
    \left\{\frac{Rx_i-y_i}{\|Rx_i-y_i\|_2}\right\}, & \text{if}\ Rx_i-y_i\neq 0 \\
    \left\{\theta_i\in\mathbb{R}^d\mid \|\theta_i\|_2\leq 1\right\}, & \text{otherwise}.
  \end{cases}.
\end{align} 
Although $\alpha_{i}(R),\beta_{i}(R)$ are set-valued functions, we can also consider
$\alpha_{i}(R),\beta_{i}(R)$ as single value functions, by associating each $\alpha_i,\ \beta_i$
with an arbitrary element from $\left\{\theta_i\in\mathbb{R}^d\mid \|\theta_i\|_2\leq 1\right\}$
whenever its value is not uniquely defined.
 
\cref{lemma:conv dyn sys SO} states that the differential inclusion \eqref{eq:dyn. system} with
initial condition $R(0)$ converges to $I$ in finite time if there exist
$g:[0,\pi]\rightarrow\mathbb{R}$ continuously differentiable such that
\begin{align}\label{eq:suff_cond_conv}
  \min_{U\in\mathcal{U}(R)}\left\langle \bar{\partial}{L}(R),R UA_{2}U^\top \right\rangle\geq 2 g(\|\log R\|_{2})
\end{align}
holds for all $R\in\SO(d)\setminus I $ and $g(s)>0$ for $s\in(0,\|\log R(0)\|_{2})$, where
\begin{align}
  \label{eq:def UR2}
  \mathcal{U}(R):=\left\{U\in\mathbb{R}^{d\times 2}\mid U^\top U = I,\quad U^\top (\log R)U = \|\log
  R\|_{2}A_{2}\right\}.
\end{align}
In this section we show \eqref{eq:suff_cond_conv} holds for a particular $g$ in \cref{lemma:sample
  gradient nonconvex}.
\begin{lemma}
  \label{lemma:sample gradient nonconvex} 
  With high probability, for all $R\in \SO(d)\setminus I$ and $U\in\mathcal{U}(R)$
  \begin{equation}
	\label{eq:sample gradient nonconvex}
	\left\langle \bar{\partial}{L}(R), R\ UA_2U^\top\right\rangle
    \geq\frac{2(1-p)}{d}\cos\left(\frac{\|\log R\|_{2}}{2}\right) - O\left( \sqrt{\frac{d(\log N +
        d)}{N}}\right)
  \end{equation}
\end{lemma}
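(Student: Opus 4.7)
The plan is to decompose $\langle \bar{\partial} L(R), RUA_2 U^\top\rangle$ into its expectation plus a fluctuation, establishing the main term via direct integration and controlling the fluctuation uniformly in $(R, U)$.

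\textbf{Step 1 (Expectation).} I would split the gradient into contributions from $\mathcal{C}^c$ and $\mathcal{C}$. For $i \in \mathcal{C}$, conditioning on $x_i$ and integrating $y_i \sim \mathrm{Unif}(\mathbb{S}^{d-1})$, rotational symmetry implies $\mathbb{E}_{y_i}[\beta_i(R)] = \mu\, Rx_i$ for a scalar $\mu$ that, by further rotational symmetry in $x_i$, is independent of $x_i$. Therefore
\begin{align*}
\mathbb{E}\langle \beta_i(R), RUA_2U^\top x_i\rangle = \mu\,\mathbb{E}_{x_i}\, x_i^\top R^\top R\,U A_2 U^\top x_i = \mu\,\mathbb{E}_{x_i}\, x_i^\top U A_2 U^\top x_i = 0,
\end{align*}
since $UA_2U^\top$ is skew-symmetric. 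Hence the corrupted contribution vanishes in expectation.

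\textbf{Step 2 (Main term from $\mathcal{C}^c$).} I apply the planar decomposition \cref{lemma: planar decomposition R} and use $U \in \mathcal{U}(R)$ to identify $U_1 = U$ and $\sigma_1 = \|\log R\|_2$. Because $R$ is block-diagonal in the planes spanned by $U, U_2, \ldots$, one has $RU = UR_{\sigma_1}$. A direct algebraic calculation using $R_{\sigma_1} A_2 = \cos(\sigma_1) A_2 - \sin(\sigma_1) I_2$ yields
\begin{align*}
\langle \alpha_i(R), RUA_2 U^\top x_i\rangle = \frac{\sin(\sigma_1)\,\|U^\top x_i\|^2}{2\sqrt{\sum_j \sin^2(\sigma_j/2)\,\|U_j^\top x_i\|^2}}.
\end{align*}
Since $\sigma_j \leq \sigma_1 \leq \pi$ implies $\sin(\sigma_j/2) \leq \sin(\sigma_1/2)$, and $\sum_j \|U_j^\top x_i\|^2 \leq 1$, the denominator is bounded above by $2\sin(\sigma_1/2)$, giving the pointwise lower bound $\cos(\sigma_1/2)\,\|U^\top x_i\|^2$. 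Taking expectation, $\mathbb{E}\|U^\top x_i\|^2 = \mathrm{Tr}(UU^\top)/d = 2/d$, which produces the leading term $\frac{2(1-p)}{d}\cos(\|\log R\|_2/2)$.

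\textbf{Step 3 (Uniform concentration).} Let $Y(R, U)$ denote the centered version of the inner product. Each summand is bounded in absolute value by $\|UA_2U^\top\|_{\mathrm{op}} = 1$, so is sub-Gaussian with norm $O(1)$. I plan to obtain $\sup_{R, U}|Y(R, U)| = O(\sqrt{d(d+\log N)/N})$ by combining Talagrand's comparison inequality (\cref{theorem:talagrand}) with an $\varepsilon$-net over $\SO(d) \times \{U \in \mathbb{R}^{d\times 2} : U^\top U = I_2\}$, whose total Frobenius-dimension is $O(d^2)$; the proof mirrors that of \cref{lemma:concentration of residuals}.

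\textbf{Main obstacle.} The delicate point is verifying the sub-Gaussian increment hypothesis \eqref{eq: subgaussian increment}, because $R \mapsto \alpha_i(R) = (R-I)x_i / \|(R-I)x_i\|_2$ is not globally Lipschitz: it is highly sensitive on the singular set $\{R : Rx_i = x_i\}$. I intend to circumvent this by invoking the convexity of $L$ on $\mathbb{R}^{d\times d}$: the subgradient inequality $L(R\exp(tUA_2U^\top)) - L(R) \geq t\langle \bar{\partial}L(R), RUA_2U^\top\rangle$ (and its counterpart with $t \to -t$) sandwiches the inner product between finite differences of $L$, which is $1$-Lipschitz in $R$ and therefore amenable to a standard $\varepsilon$-net concentration argument. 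A judicious choice of step size $t = t(N, d)$ then transfers the uniform control of $L$ into uniform control of $Y(R, U)$ at the required rate.
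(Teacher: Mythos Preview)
Your Steps~1 and~2 are correct and coincide with the paper's \cref{lemma:bound gradient nonconvex} and \cref{lemma:good gradient nonconvex}. The issue is in Step~3: you misidentify the obstacle and, as a result, leave the actual hard part of the concentration untreated.

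The ``main obstacle'' you raise about $\alpha_i(R)$ is a phantom. Your own Step~2 established the \emph{pointwise} inequality
$\langle \alpha_i(R), RUA_2U^\top x_i\rangle \ge \cos(\sigma_1/2)\,\|U^\top x_i\|_2^2$.
If you apply this inequality \emph{before} centering rather than after, the decomposition becomes
\[
\langle \bar\partial L(R), RUA_2U^\top\rangle
\ \ge\
\cos\!\Big(\tfrac{\|\log R\|_2}{2}\Big)\cdot\frac{1}{N}\sum_{i\in\mathcal{C}^c}\|UA_2U^\top x_i\|_2^2
\ +\
\frac{1}{N}\sum_{i\in\mathcal{C}}\langle \beta_i(R), RUA_2U^\top x_i\rangle,
\]
and $\alpha_i$ has disappeared from the fluctuation. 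The first sum involves only $A\mapsto\|Ax_i\|_2^2$, which is Lipschitz on $\{\|A\|_F\le 1\}$ and concentrates uniformly by a direct application of \cref{lemma:concentration of residuals}; this is exactly \cref{lemma:behav. good gradient non convex} in the paper. No convexity sandwich is needed here.

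The genuine difficulty---which your proposal does not address---is the corrupted sum
$\frac{1}{N}\sum_{i\in\mathcal{C}}\langle \beta_i(R), RSx_i\rangle$, uniformly over $R\in\SO(d)$ and $S\in\mathcal{B}_{\mathrm{skew}}$. Here $\beta_i(R)=(Rx_i-y_i)/\|Rx_i-y_i\|$ has exactly the non-Lipschitz behavior you worried about, now at the random set $\{Rx_i=y_i\}$. The paper (\cref{lemma:behav. bad gradient non convex}, via \cref{lemma:bound bad gradient nonconvex} and \cref{lemma:bound in a patch}) handles this by an $\varepsilon$-net over $(R,S)$ combined with a small-ball argument: for each net point $Q_l$ one isolates the exceptional indices $\mathcal{D}_\delta(Q_l)=\{i\in\mathcal{C}:\|Q_lx_i-y_i\|\le\delta\}$, bounds $|\mathcal{D}_\delta(Q_l)|$ using the estimate $P(\|x-y\|<\delta)\le\delta^d/(5\sqrt d)$, and exploits that $\beta_i$ is quantitatively Lipschitz on the complement. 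Your convexity-sandwich idea might in principle be adapted to this term as well, but the bias between the finite difference $\frac{1}{t}[L(R)-L(R-tV)]$ and the desired main term has to be controlled (note $R-tV\notin\SO(d)$, so the clean symmetry from Step~1 is lost), and you have not carried this out.
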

Before proving \cref{lemma:sample gradient nonconvex}, we first show that \cref{thm:converg. nonconvex}
directly follows from \cref{lemma:conv dyn sys SO} and \cref{lemma:sample gradient
  nonconvex}.

  \begin{proof}[Proof of \cref{thm:converg. nonconvex}]
  \cref{lemma:sample gradient nonconvex} implies that there exists $\gamma >0$ such that
  \begin{align}
  \gamma= O\left(\frac{d}{1-p}\sqrt{\frac{d(\log N + d)}{N}}\right)
  \end{align}
  and 
   for all
  $R\in \SO(d)\setminus I$ and $U\in\mathcal{U}(R)$
  \begin{align}
    \cos\left(\frac{\|\log R\|_{2}}{2}\right) - \frac{d}{2(1-p)} \left\langle \bar{\partial}{L}(R),
    R\ UA_2U^\top\right\rangle\leq \gamma
  \end{align}
  with high probability. Then, $g(s)=((1-p)/d)(\cos(s/2) -\gamma)$ satisfies
  \eqref{eq:suff_cond_conv} with high probability. If $\|\log R(0)\|_{2}<\pi(1 -\sqrt{\gamma})$, then
  $g(\|\log R(0)\|_{2})>0$ and \cref{lemma:conv dyn sys SO} implies that $R(t)=I$ for all $t\geq
  T(\|\log R(0)\|_{2})$, where
  \begin{align}
    \label{eq:Ts}
    T(s)&=\frac{d}{(1-p)}\int_{0}^{s}\frac{1}{\cos\left(\frac{\tau}{2}\right) - \gamma} d\tau\nonumber\\& =
    \frac{2d}{(1-p)}\left(\cosh^{-1}\left(\sec\left(\frac{s}{2}\right)\right)+O\left(\sqrt{\gamma}\right)\right)
  \end{align}
  for $s\in[0, \|\log R(0)\|_{2}]$.
\end{proof}

In the remaining of this section, we prove \cref{lemma:sample gradient nonconvex}. First, in \cref{sec:pop. version nonconv}, we prove
\cref{lemma:bound gradient nonconvex}, that shows that inequality \eqref{eq:sample gradient nonconvex} holds in expectation. This leads to the proof of
\cref{lemma:sample gradient nonconvex} in \cref{sec: sample version nonconv}; the proof follows from
\cref{lemma:behav. good gradient non convex} (\cref{sec:behav. good gradient non convex}) and
\cref{lemma:behav. bad gradient non convex} (\cref{sec:behav. bad gradient non convex}) where we
estimate the deviation of the LHS of \eqref{eq:sample gradient nonconvex} from its expectation.

\subsection{Expectation version of \cref{lemma:sample gradient nonconvex}}\label{sec:pop. version nonconv}
In this section, we show that inequality \eqref{eq:sample gradient nonconvex} holds in expectation
\begin{lemma}[Expectation version of \cref{lemma:sample gradient nonconvex}]\label{lemma:bound gradient nonconvex}
	For all $R \in \SO(d)\setminus I$ and $U\in\mathcal{U}(R)$
	\begin{align}
      \label{eq:bound gradient nonconvex}
	  \frac{1}{N}\E\left\langle\sum_{i \in\mathcal{C}^c}\alpha_{i}(R)x_{i}^{\top} +\sum_{i
        \in\mathcal{C}}\beta_{i}(R)x_{i}^{\top} , R\ UA_2U^\top\right\rangle &\geq \frac{2(1-p)}{d}
      \cos\left(\frac{\|\log R\|_{2}}{2}\right).
	\end{align}
\end{lemma}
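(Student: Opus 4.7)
The plan is to split the expectation in \eqref{eq:bound gradient nonconvex} by corruption status and show that the outliers $i\in\mathcal{C}$ contribute zero in expectation, while each inlier $i\in\mathcal{C}^c$ contributes at least $(2/d)\cos(\|\log R\|_{2}/2)$; the averaging factor $|\mathcal{C}^c|/N = 1-p$ then yields the claimed bound.

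The outlier term vanishes by rotational symmetry. Conditioning on $x_i$, the point $y_i$ is uniform on $\mathbb{S}^{d-1}$ and independent of $x_i$, so the conditional law of $\beta_i(R) = (Rx_i-y_i)/\|Rx_i-y_i\|_{2}$ is rotationally symmetric about $Rx_i$, giving $\E[\beta_i(R)\mid x_i] = \mu_d\, Rx_i$ for some dimensional scalar $\mu_d$. Hence
\begin{equation*}
\E[\beta_i(R)^\top RUA_2U^\top x_i \mid x_i] \;=\; \mu_d\, x_i^\top R^\top RUA_2U^\top x_i \;=\; \mu_d\, x_i^\top UA_2U^\top x_i \;=\; 0,
\end{equation*}
since $R^\top R = I$ and $UA_2U^\top$ is skew-symmetric.

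For the inliers, $y_i = x_i$ gives $\alpha_i(R) = (R-I)x_i/\|(R-I)x_i\|_{2}$, and the identity $x_i^\top UA_2U^\top x_i = 0$ yields
\begin{equation*}
\langle \alpha_i(R)\, x_i^\top,\, RUA_2U^\top\rangle \;=\; -\,\frac{x_i^\top RUA_2U^\top x_i}{\|(R-I)x_i\|_{2}}.
\end{equation*}
I then appeal to \cref{lemma: planar decomposition R} and write $R-I = \sum_j U_j(R_{\sigma_j}-I_2)U_j^\top$ with $\pi\geq\sigma_1\geq\cdots\geq 0$; the definition of $\mathcal{U}(R)$ lets me identify $U$ with the leading frame $U_1$, so $\sigma_1 = \|\log R\|_{2}$. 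A short computation on the principal rotation plane gives $x^\top RUA_2U^\top x = -\sin\sigma_1\,\|U^\top x\|_{2}^2$, while orthogonality of the planar blocks together with monotonicity of $\sin$ on $[0,\pi/2]$ and $\sum_j\|U_j^\top x\|_{2}^2\leq 1$ yields
\begin{equation*}
\|(R-I)x\|_{2}^2 \;=\; 4\sum_j \sin^2(\sigma_j/2)\,\|U_j^\top x\|_{2}^2 \;\leq\; 4\sin^2(\sigma_1/2).
\end{equation*}
Using $\sin\sigma_1 = 2\sin(\sigma_1/2)\cos(\sigma_1/2)$ collapses these into the per-sample bound $\langle \alpha_i(R)\, x_i^\top,\, RUA_2U^\top\rangle \geq \|U^\top x_i\|_{2}^2\cos(\sigma_1/2)$.

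Finally, since $x_i$ is uniform on $\mathbb{S}^{d-1}$ and $U$ has orthonormal columns, $\E\|U^\top x_i\|_{2}^2 = 2/d$; averaging over the $(1-p)N$ inliers and adding the vanishing outlier term completes the argument. The only genuinely non-mechanical step is identifying that the outlier contribution is zero in expectation: geometrically, the conditional mean gradient direction produced by each outlier is aligned with $Rx_i$, and any such direction is orthogonal to every antisymmetric tangent at $R$. Everything else is a routine unpacking of the planar decomposition combined with the uniform-sphere moment $\E\|U^\top x\|_{2}^2 = 2/d$.
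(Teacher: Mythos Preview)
Your argument is correct and essentially mirrors the paper's: the per-sample inlier bound you derive is exactly \cref{lemma:good gradient nonconvex}, and your symmetry argument for the outliers (conditioning on $x_i$ so that $\E[\beta_i(R)\mid x_i]\parallel Rx_i$, hence orthogonal to $RUA_2U^\top x_i$) is equivalent to the paper's computation of $\E[\beta_i(R)(Rx_i)^\top]$ as a scalar multiple of the identity. The only case you skip is $Rx_i=x_i$, where $\alpha_i(R)$ is set-valued and your ratio is undefined; the paper handles this explicitly in \cref{lemma:good gradient nonconvex} (both sides vanish since then $U^\top x_i=0$), but it is a measure-zero event and does not affect the expectation.
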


To prove \cref{lemma:bound gradient nonconvex},  we first introduce the following lemma.
\begin{lemma}\label{lemma:good gradient nonconvex}
	Let $R \in \SO(d)\setminus I$, then for all $U\in \mathcal{U}(R)$ and $i\in\mathcal{C}^{c}$
	\begin{align}
	\label{eq:expected subgradient}
	\left \langle \alpha_{i}(R), R\cdot UA_{2}U^\top x_i\right \rangle &\geq \|UA_{2}U^\top
    x_i\|^2_{2}\cos\left(\frac{\|\log R\|_{2}}{2}\right)
	\end{align}
\end{lemma}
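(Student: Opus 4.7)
}

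First I would handle the degenerate case. If $Rx_i = x_i$, then I would argue that $U^\top x_i = 0$: using the planar decomposition of \cref{lemma: planar decomposition R}, $Rx_i - x_i = \sum_j U_j(R_{\sigma_j}-I_2)U_j^\top x_i = 0$ forces $(R_\sigma - I_2)U^\top x_i = 0$ (the summands are orthogonal), and $(R_\sigma - I_2)$ is invertible for $\sigma\in(0,\pi]$; hence $UA_2U^\top x_i = UA_2w=0$ and both sides of \eqref{eq:expected subgradient} vanish. So from now on I assume $Rx_i\neq x_i$, in which case $\alpha_i(R) = (Rx_i - x_i)/\|Rx_i-x_i\|_2$. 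Writing $\sigma = \|\log R\|_2$ and $w = U^\top x_i \in\mathbb{R}^2$, note that since $U^\top U = I_2$ and $A_2$ is an isometry, $\|UA_2U^\top x_i\|_2 = \|w\|_2$.

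Next, the main algebraic step is to evaluate the inner product in closed form. The condition $U\in\mathcal{U}(R)$ says $\mathrm{span}(U)$ is exactly the two-dimensional rotation plane associated with the largest principal angle $\sigma$; equivalently, in the planar decomposition of \cref{lemma: planar decomposition R} we may take $U_1 = U$, $\sigma_1 = \sigma$, so $RU = UR_\sigma$ and $U^\top U_j = 0$ for $j\geq 2$. Decomposing
\begin{align*}
Rx_i - x_i &= U(R_\sigma - I_2)w + \sum_{j\geq 2} U_j(R_{\sigma_j}-I_2)U_j^\top x_i, \qquad RUA_2U^\top x_i = UR_\sigma A_2 w,
\end{align*}
the orthogonal cross terms die and the inner product collapses to the planar quantity $\langle (R_\sigma - I_2)w, R_\sigma A_2 w\rangle_{\mathbb{R}^2}$. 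A direct $2\times 2$ computation, using $R_\sigma^\top A_2 = A_2 R_{-\sigma}$, $A_2^\top = -A_2$, and $w^\top A_2 w = 0$, gives
\begin{align*}
\langle (R_\sigma - I_2)w, R_\sigma A_2 w\rangle = \sin(\sigma)\,\|w\|_2^2.
\end{align*}

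The final ingredient is the denominator bound $\|Rx_i - x_i\|_2 \leq 2\sin(\sigma/2)$. Using the planar decomposition once more, $\|Rx_i - x_i\|_2^2 = \sum_j \|(R_{\sigma_j}-I_2)U_j^\top x_i\|_2^2 = \sum_j 4\sin^2(\sigma_j/2)\,\|U_j^\top x_i\|_2^2$, and since $\sigma_j\leq \sigma\leq \pi$ and $\sin(\cdot/2)$ is increasing on $[0,\pi]$, this is bounded by $4\sin^2(\sigma/2)\sum_j \|U_j^\top x_i\|_2^2 \leq 4\sin^2(\sigma/2)\,\|x_i\|_2^2 = 4\sin^2(\sigma/2)$. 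Combining,
\begin{align*}
\langle \alpha_i(R), RUA_2U^\top x_i\rangle = \frac{\sin\sigma\,\|w\|_2^2}{\|Rx_i - x_i\|_2} \geq \frac{\sin\sigma}{2\sin(\sigma/2)}\,\|w\|_2^2 = \cos(\sigma/2)\,\|w\|_2^2,
\end{align*}
which is the desired inequality.

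There is really no hard obstacle: the argument is an exercise in planar algebra once one recognizes that $\mathrm{span}(U)$ is an invariant plane of $R$ with rotation angle $\sigma$. The only place where one must be careful is in justifying $RU = UR_\sigma$ from the definition of $\mathcal{U}(R)$, and in ensuring the subgradient degenerate case is handled correctly; both are straightforward from \cref{lemma: planar decomposition R}.
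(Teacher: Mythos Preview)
Your proposal is correct and follows essentially the same route as the paper's proof: handle the degenerate case via the planar decomposition, use $RU = UR_\sigma$ to reduce the numerator to a $2\times2$ computation yielding $\sin(\sigma)\|w\|_2^2$, and bound the denominator by $2\sin(\sigma/2)$ via $\sigma_j\leq\sigma$. The only cosmetic difference is that the paper simplifies the numerator first using $R^\top R = I$ (writing it as $-x_i^\top R\,UA_2U^\top x_i$) before passing to the plane, whereas you invoke the planar decomposition directly; the resulting $2\times2$ calculation and the denominator estimate are identical.
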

\begin{proof}
  First, we consider the case when $Rx_i=x_i$. Then $x_i$ is an invariant direction of $R$. By
  \cref{lemma: planar decomposition R}, $(\log R)x_i=0$. Since, $R\neq I$, then
  $\mathrm{ker}(UA_{2}U^\top )\supseteq\mathrm{ker}(\log R)$ and $UA_{2}U^\top x_i =0$. Therefore
  \eqref{eq:expected subgradient} trivially holds.
  
  Now, we consider the case when $Rx_i\neq x_i$. Then $ \alpha_i(R)=(Rx_i - x_i)/\|Rx_i -
  x_i\|_2$.  Since $UA_{2}U^\top\in\mathcal{S}_{\mathrm{skew}}$, then $\langle x_i, UA_{2}U^\top
  x_i\rangle = 0$. Therefore,
  \begin{align}
	\left\langle \frac{Rx_i - x_i}{\|Rx_i - x_i\|_2} , R\cdot UA_{2}U^\top x_i \right\rangle &=
    \frac{-x_i^T R\cdot UA_{2}U^\top x_{i} }{\sqrt{2 - 2x_i^T R x_i}}. \label{eq:subgradient good}
  \end{align}
  
  By definition of $\mathcal{U}(R)$, $R\cdot U = U \exp(\|\log R\|_{2}A_{2})$. Let
  $\tilde{x}_{i}=U^\top x_{i}$. Then,
  \begin{align}
	\label{eq:good numerator}
	-x_i^T R\cdot UA_{2}U^\top x_{i} &= -\tilde{x}_i^T \exp\left(\|\log R\|_{2} A_2 \right) A_2\tilde{x}_{i}\nonumber\\
	&= \sin(\|\log R\|_{2})\ \tilde{x}_i^T \tilde{x}_{i}
  \end{align}
  and $\tilde{x}_i^T \tilde{x}_{i} = \|UA_{2}U^\top x_{i}\|_{2}^{2}$. Similarly, using the planar
  decomposition of $R$ in \cref{lemma: planar decomposition R},
  \begin{align}
	\label{eq:good denominator}
	1-x_{i}^\top R x_{i} &= \sum_{j=1}^{\lfloor d/2\rfloor}x_{i}^\top U_{j}(I_{2}-\exp\left(\sigma_{j} A_{2}\right))U_{j}^\top x_{i}\nonumber\\
	&= \sum_{j=1}^{\lfloor d/2\rfloor}(1-\cos(\sigma_j))x_{i}^\top U_{j}U_{j}^\top x_{i}\nonumber\\
	&\leq  1-\cos(\|\log R\|_{2}),
  \end{align}
  since $\sigma_{j}\leq \|\log R\|_{2}\leq \pi$ for all $j= 1,\dots,\lfloor d/2\rfloor$.
  Inserting \eqref{eq:good numerator} and \eqref{eq:good denominator} in RHS of \eqref{eq:subgradient
    good}, we get
  \begin{align}
	\left\langle \frac{Rx_i - x_i}{\|Rx_i - x_i\|_2} x_i^T , R\cdot UA_{2}U^\top \right\rangle &\geq
    \frac{\sin(\|\log R\|_{2})\|UA_{2}U^\top x_{i}\|_{2}^{2}}{2\sin(\|\log R\|_{2}/2)}.
  \end{align}
\end{proof}

Using \cref{lemma:good gradient nonconvex}, we can now prove \cref{lemma:bound gradient nonconvex}.
  \begin{proof}[Proof of \cref{lemma:bound gradient nonconvex}]
  Let
  \begin{align}\label{eq:def_M}
    M = \frac{1}{N}\sum_{i \in\mathcal{C}^c}\alpha_{i}(R)x_{i}^{\top} +\frac{1}{N}\sum_{i
      \in\mathcal{C}}\beta_{i}(R)x_{i}^{\top}
  \end{align}
  then \cref{lemma:good gradient nonconvex} implies
  \begin{align}\label{eq:lower_bound_inner}
    \left\langle M, R\cdot UA_{2}U^\top \right \rangle \geq& \cos\left(\frac{\|\log
      R\|_{2}}{2}\right)\frac{1}{N}\sum_{i\in\mathcal{C}^{c}} \|UA_{2}U^\top x_{i}\|_{2}^{2}\nonumber\\ &+
    \frac{1}{N}\sum_{i\in\mathcal{C}} \left\langle \beta_{i}(R), R\cdot UA_{2}U^\top x_i
    \right\rangle.
  \end{align}
  Given a fixed rotation $R$, $\beta_{i}(R) = (Rx_i-y_i)/\|Rx_i-y_i\|_{2}$ almost
  everywhere. Additionally, for $i\in\mathcal{C}$, rotation invariance implies that $Rx_{i}$ and
  $y_{i}$ are i.i.d random variables. Therefore, for $i\in\mathcal{C}$,
  \begin{eqnarray}
    \E\left[\beta_{i}(R)\ (Rx_{i})^\top \right] =\frac{1}{2}
    \E\left[\frac{(Rx_i-y_i)(Rx_i-y_i)^\top}{\|Rx_i-y_i\|_{2}}\right] =
    \frac{\E[\|x_{i}-y_{i}\|_{2}]}{2d}I
  \end{eqnarray}
  and $\E\left\langle \beta_{i}(R) , R\cdot UA_{2}U^\top x_i \right\rangle = 0 $.
  
  On the other hand, for all $i\in\mathcal{C}^{c}$, $\E \|UA_{2}U^\top x_{i}\|_{2}^{2} =
  \|UA_{2}U^\top\|_{F}^2/d=2/d$. Hence
  \begin{align}
    \E\left\langle M, R\cdot UA_2U^\top\right\rangle &\geq\frac{2(1-p)}{d} \cos\left(\frac{\|\log
      R\|_{2}}{2}\right).
  \end{align}
\end{proof}

\cref{lemma:bound gradient nonconvex} and \cref{lemma:conv dyn sys SO} imply that as $N\rightarrow
\infty$, for all $p<1$ and $\|\log R\|_{2}<\pi$, the dynamical system \eqref{eq:dyn. system}
converges to $I$ in finite time.

\subsection{Proof of \cref{lemma:sample gradient nonconvex} via concentration inequalities}\label{sec: sample version nonconv}

\cref{lemma:good gradient nonconvex} provides the behavior in expectation of LHS of \eqref{eq:sample
  gradient nonconvex}.  To prove \cref{lemma:sample gradient nonconvex}, we bound the deviation of
\eqref{eq:sample gradient nonconvex} away from the population limit. We deal separately with the terms
concerning corrupted and uncorrupted points in \cref{lemma:behav. good gradient non convex} and
\cref{lemma:behav. bad gradient non convex} respectively.

\begin{lemma}[Deviation from expectation for uncorrupted points]\label{lemma:behav. good gradient non convex}
  Let $\mathcal{C}^{c}$ be the index set of uncorrupted points, then
  \begin{align}
	\label{eq:behav. good gradient nonconvex}
    \sup_{\|A\|_{F}=1}\left|\frac{1}{N}\sum_{i\in \mathcal{C}^{c}}\left[\|A x_i\|^2_{2} - \mathbb{E}\|A x_i\|^2_{2}\right]\right|\leq c\sqrt{\frac{d+\log N}{N}}	
  \end{align}
  with high probability and $c$ is a universal constant. 
\end{lemma}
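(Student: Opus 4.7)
The plan is to recast the supremum as the operator norm of a centered rank-one sum and then bound that via an $\varepsilon$-net on the sphere. Observe that $\|A x_i\|_2^2 = \langle A^\top A,\,x_i x_i^\top\rangle$, and since $x_i\sim\Unif(\mathbb{S}^{d-1})$ we have $\E[x_i x_i^\top] = I_d/d$, hence $\E\|A x_i\|_2^2 = \|A\|_F^2/d$. Setting
\[
Z := \sum_{i\in\mathcal{C}^c}\left(x_i x_i^\top - \tfrac{1}{d} I_d\right),
\]
the inner sum appearing in \eqref{eq:behav. good gradient nonconvex} equals $\langle A^\top A,\,Z\rangle$. Since $A^\top A \succeq 0$ with $\Tr(A^\top A) = \|A\|_F^2 = 1$ and $Z$ is symmetric, we get $|\langle A^\top A,\,Z\rangle|\leq \|Z\|_2$, so it suffices to prove $\|Z\|_2 \leq c\sqrt{N(d+\log N)}$ with high probability.

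To bound $\|Z\|_2$, I would use an $\varepsilon$-net on $\mathbb{S}^{d-1}$. By \cref{thm:enet} there exists a $\tfrac{1}{2}$-net $\mathcal{N}\subset\mathbb{S}^{d-1}$ with $|\mathcal{N}|\leq 5^d$, and the standard variational argument for symmetric matrices gives $\|Z\|_2 \leq 2\max_{v\in\mathcal{N}}|v^\top Z v|$. For any fixed $v\in\mathbb{S}^{d-1}$,
\[
v^\top Z v = \sum_{i\in\mathcal{C}^c}\bigl(\langle v, x_i\rangle^2 - 1/d\bigr),
\]
a sum of i.i.d.\ centered random variables each valued in $[-1/d,\,1-1/d]\subset[-1,1]$ and hence sub-gaussian with $\|\cdot\|_{\psi_2} = O(1)$. \cref{theorem:hoeffding} then yields
\[
\mathrm{P}\bigl\{|v^\top Z v|\geq t\bigr\} \leq 2\exp\!\left(-c t^2/N\right).
\]

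Setting $t = C\sqrt{N(d+\log N)}$ for a sufficiently large universal constant $C$, a union bound over $\mathcal{N}$ bounds the failure probability by $2\cdot 5^d\exp(-c C^2(d+\log N)) \leq c'/N$. Consequently $\|Z\|_2 \leq 2C\sqrt{N(d+\log N)}$ with high probability, and dividing by $N$ gives \eqref{eq:behav. good gradient nonconvex}. The only nontrivial design choice — and the main obstacle to a more naive attempt — is the reduction to $\|Z\|_2$: a direct net argument on the $d^2$-dimensional unit Frobenius ball (which is essentially what \cref{lemma:concentration of residuals} applied to $f_i(A,x_i) = \|A x_i\|_2^2$ delivers) produces only the weaker scaling $\sqrt{(d^2+\log N)/N}$. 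Recovering the advertised $\sqrt{d}$ rate hinges on exploiting the quadratic-form structure in $A$ to pass through an $\varepsilon$-net on the $(d-1)$-sphere rather than on the matrix ball.
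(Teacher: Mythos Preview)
Your argument is correct (with one cosmetic fix: for the quadratic form $v\mapsto v^\top Z v$ the standard net reduction needs $\varepsilon<1/2$, since $|u^\top Z u - v^\top Z v|\leq 2\varepsilon\|Z\|_2$ gives $\|Z\|_2\leq(1-2\varepsilon)^{-1}\max_{\mathcal N}|v^\top Zv|$; take $\varepsilon=1/4$ and $|\mathcal N|\leq 9^d$, which changes nothing in the conclusion).

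The route, however, is genuinely different from the paper's. The paper does \emph{not} reduce to $\|Z\|_2$; it applies \cref{lemma:concentration of residuals} (Talagrand on the $d^2$-dimensional Frobenius ball) directly to $f_i(A,x_i)=\|Ax_i\|_2^2$. The key step is to observe that $x\mapsto\|Sx\|_2^2-\|Tx\|_2^2$ is Lipschitz on $\mathbb{S}^{d-1}$ with constant $O(\|S-T\|_F)$, so by spherical concentration \eqref{eq: isoperimetric} the subgaussian increment constant is $K=O(1/\sqrt d)$, not $O(1)$. Plugging this into the bound $c\sqrt{n(d^2K^2+\log n)}$ of \cref{lemma:concentration of residuals} yields exactly $c\sqrt{n(d+\log n)}$. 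So your closing remark is off: \cref{lemma:concentration of residuals} does deliver the advertised $\sqrt d$ rate, once the $1/\sqrt d$ increment is recognized. What each approach buys: yours is more elementary---no Talagrand, just Hoeffding and a sphere net, and it is the standard sample-covariance argument; the paper's is heavier but template-driven, reusing the same chaining lemma used elsewhere and isolating the problem-specific work in checking the increment condition.
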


\begin{lemma}[Deviation from expectation for corrupted points]\label{lemma:behav. bad gradient non convex}
  Let $\mathcal{C}$ be the index set of corrupted points, then with high probability, for all
  choices of $\beta_{i}(R)$ as defined in \eqref{def: betai},
  \begin{align}
	\label{eq:behav. bad gradient nonconvex}
	\sup_{R\in\SO(d),S\in\mathcal{B}_{\mathrm{skew}}}
    \left|\frac{1}{N}\sum_{i\in\mathcal{C}}\left\langle RSx_i, \beta_{i}(R)\right\rangle
    \right|&\leq c \sqrt{\frac{d\log (N)}{N}}
  \end{align}
  with universal constant $c$.
\end{lemma}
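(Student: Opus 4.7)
The plan is to mirror the $\varepsilon$-net strategy behind \cref{lemma:concentration of residuals}, adapted to the non-smooth dependence of $\beta_i(R)$ on $R$.

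Fix $R \in \SO(d)$ and $S \in \mathcal{B}_{\mathrm{skew}}$, and set $Z_i(R,S) := \langle RSx_i, \beta_i(R)\rangle$. Since $\|RSx_i\|_2 \|\beta_i(R)\|_2 \leq \|S\|_{op} \leq 1$, each summand is bounded by $1$ in absolute value. For the mean, skew-symmetry of $S$ and orthogonality of $R$ give $\langle RSx_i, Rx_i\rangle = x_i^\top S x_i = 0$, so
\begin{equation*}
Z_i(R,S) = -\frac{x_i^\top S\,(R^\top y_i)}{\|x_i - R^\top y_i\|_2}.
\end{equation*}
Writing $v_i := R^\top y_i$, which is uniform on $\mathbb{S}^{d-1}$ and independent of $x_i$ for $i \in \mathcal{C}$, the joint law of $(x_i, v_i)$ is invariant under the swap $x_i \leftrightarrow v_i$. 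This swap negates $Z_i$ by skew-symmetry of $S$, forcing $\mathbb{E}Z_i(R,S) = 0$. Hoeffding's inequality (\cref{theorem:hoeffding}) then gives the pointwise tail bound $\mathbb{P}\bigl\{|\frac{1}{N}\sum_{i \in \mathcal{C}} Z_i(R,S)| > t\bigr\} \leq 2\exp(-cNt^2)$.

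Next I would reduce the supremum over $S$ by duality rather than by netting, to avoid an unnecessary factor of $d$. Using $\sup_{S \in \mathcal{B}_{\mathrm{skew}}}\langle S, M\rangle = \|\mathrm{skew}(M)\|_*$ together with $\|A\|_* \leq \sqrt{d}\|A\|_F$,
\begin{equation*}
\sup_{S \in \mathcal{B}_{\mathrm{skew}}} \Bigl|\tfrac{1}{N}\textstyle\sum_{i \in \mathcal{C}} Z_i(R,S)\Bigr| \leq \sqrt{d}\, \Bigl\|\tfrac{1}{N}\textstyle\sum_{i \in \mathcal{C}} \mathrm{skew}\bigl(R^\top \beta_i(R) x_i^\top\bigr)\Bigr\|_F,
\end{equation*}
so it suffices to control the Frobenius norm uniformly over $R \in \SO(d)$. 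I would bound this via an $\varepsilon$-net $\mathcal{N}_R$ of $\SO(d)$ of log-cardinality $O(d^2\log(1/\varepsilon))$ (\cref{thm:enet}) with Hoeffding applied entrywise to the skew part of the summed matrix, and a union bound. Taking $\varepsilon$ to be a small polynomial in $1/N$ keeps this contribution at the scale $\sqrt{\log N/N}$ for each entry, which is ultimately what drives the final rate after multiplication by $\sqrt{d}$.

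The main obstacle is extending the Frobenius-norm estimate from $\mathcal{N}_R$ to all of $\SO(d)$: the map $R \mapsto \beta_i(R) = (Rx_i - y_i)/\|Rx_i - y_i\|_2$ has Lipschitz constant of order $1/\|Rx_i - y_i\|_2$, which diverges near the singular set $\{R : Rx_i = y_i\}$. I would handle this by splitting, at each $R$, the corrupted indices into a regular subset $\mathcal{G}_R := \{i \in \mathcal{C} : \|Rx_i - y_i\|_2 \geq \eta\}$, on which $\beta_i(\cdot)$ is $O(1/\eta)$-Lipschitz and the standard perturbation argument extends the net estimate, and a singular subset whose cardinality is controlled uniformly by $|\mathcal{C}\setminus\mathcal{G}_R| \lesssim N\eta^{d-1} + \sqrt{N\log N}$, using $\mathbb{P}(\|Rx_i - y_i\|_2 < \eta) \lesssim \eta^{d-1}$ from spherical-cap volume and a Bernoulli union bound over a net of $R$. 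Each singular index contributes at most $O(1/N)$ to the Frobenius norm, so choosing $\eta$ as a small positive power of $(\log N)/N$ absorbs this contribution into the main term. Multiplying the resulting $\sqrt{\log N/N}$ bound on the Frobenius norm by the $\sqrt{d}$ from the duality step yields the claimed $c\sqrt{d\log(N)/N}$.
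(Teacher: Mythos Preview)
Your outline has the right skeleton (pointwise concentration, net over $R$, splitting off a singular set $\{i:\|Rx_i-y_i\|<\eta\}$ to control the non-Lipschitz part of $\beta_i$), and these steps do mirror the paper's Lemmas~\ref{lemma:bound bad gradient nonconvex} and~\ref{lemma:bound in a patch}. However, the duality-plus-Frobenius reduction introduces a genuine gap in the $d$-dependence, so the argument as written does not prove the stated bound $c\sqrt{d\log N/N}$ with $c$ universal.

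Concretely: once you choose $\varepsilon$ polynomially small in $1/N$, your net $\mathcal{N}_R$ has log-cardinality of order $d^2\log N$. An entrywise Hoeffding bound on $M_{jk}(R)=\frac{1}{N}\sum_{i\in\mathcal{C}}\mathrm{skew}(R^\top\beta_i(R)x_i^\top)_{jk}$ must survive a union bound over these $\exp(cd^2\log N)$ rotations. Even granting the sharpest entrywise sub-Gaussian norm $\|M_{jk}\|_{\psi_2}=O((Nd)^{-1/2})$ (which you did not argue), the tail $\exp(-cNd\,t^2)$ only beats the net when $t\gtrsim\sqrt{d\log N/N}$, not $\sqrt{\log N/N}$ as you claim. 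Summing $O(d^2)$ entries then gives $\|M(R)\|_F\lesssim d\sqrt{d\log N/N}$, and your $\sqrt{d}$ from $\|\cdot\|_*\le\sqrt{d}\|\cdot\|_F$ yields a final bound of order $d^2\sqrt{\log N/N}$, off by $d^{3/2}$.

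The paper avoids this loss by \emph{not} reducing to a Frobenius norm. It nets both $S\in\mathcal{B}_{\mathrm{skew}}$ and $R\in\SO(d)$ jointly and applies Hoeffding directly to the scalar $\langle Q_lT_kx_i,\tilde\beta_i(Q_l)\rangle$. The crucial ingredient you are missing is Lemma~\ref{lemma: MGF xS(x-y)}: for $S\in\mathcal{B}_{\mathrm{skew}}$ the scalar $\langle Sx,(x-y)/\|x-y\|\rangle$ has sub-Gaussian norm $O(1/\sqrt{d})$, so the pointwise tail is $\exp(-c\,dNt^2)$. A union bound over the two nets (combined log-size $O(d^2\log N)$) then forces only $t\gtrsim\sqrt{d\log N/N}$, with no further factors of $d$ from a matrix norm. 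Your duality step $\sup_S\langle S,M\rangle=\|\mathrm{skew}(M)\|_*$ is correct but throws away exactly this structure; to recover the right rate you would have to control $\|M(R)\|_*$ uniformly in $R$ at the scale $\sqrt{d\log N/N}$, which in effect requires netting the unit ball of $\|\cdot\|_2$---i.e.\ the same joint net the paper uses.
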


\cref{lemma:sample gradient nonconvex} is a direct consequence of \cref{lemma:behav. good gradient
  non convex} and \cref{lemma:behav. bad gradient non convex}.

  \begin{proof}[Proof of \cref{lemma:sample gradient nonconvex}]
  By \eqref{eq:lower_bound_inner} and \cref{lemma:bound gradient nonconvex}, for $M$ defined as
  in \eqref{eq:def_M} and any value choice of $\alpha_i(R),\beta_{i}(R)$,
  \begin{align}
	\left\langle M, R\cdot UA_{2}U^\top \right \rangle &\geq\cos\left(\frac{\|\log
      R\|_{2}}{2}\right)\left( \frac{2(1-p)}{d}\right)\nonumber\\
      &\ +\cos\left(\frac{\|\log
      	R\|_{2}}{2}\right)\left( \frac{1}{N}\sum_{i\in
      	\mathcal{C}^{c}}\left[\|UA_{2}U^\top x_i\|^2_{2} - \mathbb{E}\|UA_{2}U^\top
      x_i\|^2_{2}\right]\right)\nonumber\\ 
       &\ +\frac{1}{N}\sum_{i\in\mathcal{C}}\left\langle R\cdot
    UA_{2}U^\top x_i, \beta_{i}(R)\right\rangle.
  \end{align}
  Since $UA_{2}U^\top\in\mathcal{S}_{\mathrm{skew}}$ such that $\|UA_{2}U^\top\|_{2} = 1$ and
  $\|UA_{2}U^\top\|^2_{F} = 2$, \cref{lemma:behav. good gradient non convex} and
  \cref{lemma:behav. bad gradient non convex} imply that with high probability, for all $U\in
  \mathcal{U}(R)$, and all value choices of $\alpha_i(R),\beta_{i}(R)$,
  \begin{equation}
	\left\langle M, R\cdot UA_2U^\top\right\rangle \geq\frac{2(1-p)}{d}\cos\left(\frac{\|\log
      R\|_{2}}{2}\right) - O\left( \sqrt{\frac{d(\log N + d)}{N}}\right).
  \end{equation}
\end{proof}

\subsubsection{Proof of \cref{lemma:behav. good gradient non convex}: Deviation from expectation concerning uncorrupted points}\label{sec:behav. good gradient non convex}

In this section, the proof of \cref{lemma:behav. good gradient non convex} is detailed.\newline

  \begin{proof}[Proof of \cref{lemma:behav. good gradient non convex}]
  We first show that $\|A x_i\|^2_{2} - \mathbb{E}\|A x_i\|^2_{2}$ satisfies condition \eqref{eq:
    subgaussian increment} of \cref{lemma:concentration of residuals}. Let
  $S,T\in\mathbb{R}^{n\times n}$ such that $\|S\|_{F}=\|T\|_{F} =1$, and $x,y\in \mathbb{S}^{d-1}$,
  then
  \begin{align}
	|\|Sx\|_2^2-\|Sy\|_2^2 -\|Tx\|_2^2 + \|Ty\|_2^2| &=\left|\left\langle S^\top S - T^\top
    T,xx^\top - yy^\top \right\rangle\right|.
  \end{align} 
  By rewriting
  \begin{align}
	2(S^\top S - T^\top T) = (S-T)^\top(S+T) + (S+T)^\top(S-T),
  \end{align}
  \begin{align}
	2(xx^\top - yy^\top)=(x-y)(x+y)^\top + (x+y)(x-y)^\top,
  \end{align}
  and using Cauchy-Schwartz inequality, we get
  \begin{align}
	|\|Sx\|_2^2-\|Sy\|_2^2 -\|Tx\|_2^2 + \|Ty\|_2^2|
	&\leq\|S-T\|_{F}\|S+T\|_{F}\|x-y\|_{2}\|x+y\|_{2}\nonumber\\
	&\leq 4\|S-T\|_{F}\|x-y\|_{2}.
  \end{align} 
  This implies that $f(x) = \|Sx\|_2^2-\|Tx\|_2^2$ is a Lipschitz function on $\mathbb{S}^{d-1}$
  with $\|f\|_{\mathrm{Lip}}\leq 4\|S-T\|_{F}$. Then, by condition \eqref{eq: isoperimetric},
  $\|Sx\|_2^2$ satisfies condition \eqref{eq: subgaussian increment} with
  \begin{align}
	\left\|\|Sx\|_2^2-\|Tx\|_2^2-\mathbb{E}\left(\|Sx\|_2^2-\|Tx\|_2^2\right)\right\|_{\psi_{2}}\leq
    c'\frac{\|S-T\|_{F}}{\sqrt{d}}.
  \end{align} 
  Therefore, \cref{lemma:concentration of residuals} implies
  \begin{align}
	\frac{1}{N}\sup_{\|S\|_{F}=1}\left|\sum_{j=0}^{(1-p)N}\|Sx_i\|^2_{2} -
    \mathbb{E}\|Sx_i\|^2_{2}\right|\leq c\sqrt{\frac{d+\log N}{N}}
	\end{align}
  with probability $1-2N^{-d}$.
\end{proof}

\subsubsection{Proof of \cref{lemma:behav. bad gradient non convex}: Deviation from expectation concerning corrupted points}
\label{sec:behav. bad gradient non convex} 

We now outline the strategy of the proof. We first build an $\varepsilon$-net (\cref{lemma:cover
  SO}) for the set $\mathcal{B}_{\mathrm{skew}}(d):=\{S\mid S=-S^\top, \|S\|_{2}\leq1\}$ and also
$\SO(d)$. Then, we show that the term
\begin{align}
\label{eq: corrupted term}
\sum_{i\in\mathcal{C}}\left\langle RSx_i,\beta_{i}(R) \right\rangle,\quad  R\in\SO(d),\quad S\in\mathcal{B}_{\mathrm{skew}} 
\end{align}
behaves similarly within each neighborhood defined by the $\varepsilon$-nets. Next, for each pair of
$(S,R)$ in $\varepsilon$-nets we bound the difference between \eqref{eq: corrupted term} and its
expectation, which also implies a similar behavior within a neighborhood of the pair $(S,R)$.
Finally, an application of the union bound over all points in $\varepsilon$-net gives a global
deviation bound of \eqref{eq: corrupted term} for all $S,R$ simultaneously. This idea is summarized
in \cref{lemma:bound bad gradient nonconvex}, where \cref{lemma:bound in a patch} gives estimate to
specific term in \cref{lemma:bound bad gradient nonconvex}.


We construct an Euclidean $\varepsilon$-net for $\SO(d)$ and $\mathcal{B}_{\mathrm{skew}}(d)$ using
the following lemma.
\begin{lemma}
	\label{lemma:cover SO}
	Let $\mathcal{N}^{\varepsilon}_{\mathcal{A}}$ be an Euclidean $\varepsilon$-net of 
	\begin{equation}
	  \mathcal{A} := \left\{S\in \mathbb{R}^{d\times d} \ \left| \ \|\mathrm{skew}(S)\|_2 \leq
      \frac{1}{\sqrt{2}},\ S(i,j)=0\ \text{if}\ i\geq j \right. \right\}
	\end{equation}
	then
	\begin{align}\label{eq:def_NB_NS}
	\mathcal{N}^{\epsilon}_{\mathcal{B}} := \left\{\mathrm{skew}(\sqrt{2}\ S)\mid S\in
    \mathcal{N}^{\varepsilon}_{\mathcal{A}}\right\},\quad \mathcal{N}^{\pi\varepsilon}_{\SO} :=
    \left\{\exp\left(\pi \mathrm{skew}(\sqrt{2}\ S)\right)\mid S\in
    \mathcal{N}^{\varepsilon}_{\mathcal{A}}\right\}
	\end{align}
	are Euclidean $\varepsilon$-net of $\mathcal{B}_{\mathrm{skew}}(d)$ and Euclidean
    $(\pi\varepsilon)$-net of $\SO(d)$ respectively, of size
	\begin{align}
	  \label{eq: size e-net}
	  |\mathcal{N}^{\pi \varepsilon}_{\SO}|\leq |\mathcal{N}^{\varepsilon}_{\mathcal{B}}|\leq
      |\mathcal{N}^{\varepsilon}_{\mathcal{A}}|\leq \left(6
      \sqrt{d}\ \varepsilon^{-1}\right)^{\frac{d(d-1)}{2}}.
	\end{align}
	
\end{lemma}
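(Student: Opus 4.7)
The plan is to set up a Frobenius-isometric correspondence between $\mathcal{A}$ and $\mathcal{B}_{\mathrm{skew}}(d)$, then lift an $\varepsilon$-net of $\mathcal{A}$ to nets of $\mathcal{B}_{\mathrm{skew}}(d)$ and $\SO(d)$, and finally estimate cardinalities via the standard covering numbers of Euclidean balls from \cref{thm:enet} and \cref{thm:enet monotone}.

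First, I would analyze the map $\phi:\mathcal{A}\to\mathcal{B}_{\mathrm{skew}}(d)$ given by $\phi(S)=\mathrm{skew}(\sqrt{2}\,S)=\sqrt{2}\,\mathrm{skew}(S)$. An entrywise computation for strictly upper triangular $S$ (where $\mathrm{skew}(S)(i,j)=S_{ij}/2$ for $i<j$ and $-S_{ji}/2$ for $i>j$) gives $\|\mathrm{skew}(S)\|_F^2=\tfrac{1}{2}\|S\|_F^2$, so $\|\phi(S)-\phi(S')\|_F=\|S-S'\|_F$ for all $S,S'\in\mathcal{A}$; moreover $\|\phi(S)\|_2=\sqrt{2}\|\mathrm{skew}(S)\|_2\le 1$. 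Conversely, given $T\in\mathcal{B}_{\mathrm{skew}}(d)$, define $S^*\in\mathbb{R}^{d\times d}$ by $S^*(i,j)=\sqrt{2}\,T(i,j)$ for $i<j$ and zero otherwise, so that $\phi(S^*)=T$ and $\|\mathrm{skew}(S^*)\|_2=\|T\|_2/\sqrt{2}\le 1/\sqrt{2}$, hence $S^*\in\mathcal{A}$. Thus $\phi$ is a Frobenius-isometric bijection. For any $T\in\mathcal{B}_{\mathrm{skew}}(d)$, pick $S\in\mathcal{N}^\varepsilon_\mathcal{A}$ within Frobenius distance $\varepsilon$ of $\phi^{-1}(T)$; then $\|\phi(S)-T\|_F=\|S-\phi^{-1}(T)\|_F\le\varepsilon$, so $\mathcal{N}^\varepsilon_\mathcal{B}=\phi(\mathcal{N}^\varepsilon_\mathcal{A})$ is indeed an $\varepsilon$-net of $\mathcal{B}_{\mathrm{skew}}(d)$.

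For $\SO(d)$, I would invoke surjectivity of $\exp:\pi\mathcal{B}_{\mathrm{skew}}(d)\to\SO(d)$ noted in \cref{sec: riemanian manifold}, combined with the classical nonexpansive estimate $\|e^A-e^B\|_F\le\|A-B\|_F$ for skew-symmetric $A,B$. The latter follows from the identity $e^A-e^B=\int_0^1 e^{sA}(A-B)e^{(1-s)B}\,ds$ and the fact that $e^{sA},e^{(1-s)B}$ are orthogonal, hence have unit operator norm. Given $R\in\SO(d)$, write $R=\exp(\pi T)$ with $T\in\mathcal{B}_{\mathrm{skew}}(d)$, approximate $T$ by $\phi(S)$ as in the previous step, and set $R'=\exp(\pi\phi(S))\in\mathcal{N}^{\pi\varepsilon}_{\SO}$; then $\|R-R'\|_F\le\pi\|T-\phi(S)\|_F\le\pi\varepsilon$.

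The remaining task is the size bound. Since $\mathcal{A}$ lies in the $d(d-1)/2$-dimensional subspace of strictly upper triangular matrices, and since the constraint $\|\mathrm{skew}(S)\|_2\le 1/\sqrt{2}$ together with $\|\mathrm{skew}(S)\|_F\le\sqrt{d}\,\|\mathrm{skew}(S)\|_2$ gives $\|S\|_F=\sqrt{2}\,\|\mathrm{skew}(S)\|_F\le\sqrt{d}$, $\mathcal{A}$ is contained in a Frobenius ball of radius $\sqrt{d}$ in that subspace. Applying \cref{thm:enet monotone} to upper bound $|\mathcal{N}^\varepsilon_\mathcal{A}|$ by the size of an $\varepsilon/2$-net of this ball, and then rescaling in \cref{thm:enet}, yields the asserted $(6\sqrt{d}/\varepsilon)^{d(d-1)/2}$ bound. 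The inequalities $|\mathcal{N}^{\pi\varepsilon}_{\SO}|\le|\mathcal{N}^\varepsilon_\mathcal{B}|\le|\mathcal{N}^\varepsilon_\mathcal{A}|$ are immediate because, by construction, both nets in \eqref{eq:def_NB_NS} are images of $\mathcal{N}^\varepsilon_\mathcal{A}$. The main bookkeeping obstacle is to verify the exact Frobenius identity $\|\mathrm{skew}(S)\|_F^2=\tfrac{1}{2}\|S\|_F^2$ (rather than a looser Lipschitz bound with constant $\sqrt{2}$) so that $\phi$ is an honest isometry and no extra factor creeps into the final $\varepsilon$; everything else reduces to routine invocation of the exponential-map nonexpansivity and standard ball covering.
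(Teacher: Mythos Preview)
Your proposal is correct and follows essentially the same approach as the paper: the isometric identification of $\mathcal{A}$ with $\mathcal{B}_{\mathrm{skew}}(d)$ via $\phi(S)=\mathrm{skew}(\sqrt{2}S)$, the nonexpansiveness of $\exp$ on skew-symmetric matrices, and the cardinality bound via embedding $\mathcal{A}$ in a Frobenius ball of radius $\sqrt{d}$ and invoking \cref{thm:enet monotone} and \cref{thm:enet} are exactly the paper's steps. Your derivation of the Lipschitz bound for $\exp$ via the identity $e^A-e^B=\int_0^1 e^{sA}(A-B)e^{(1-s)B}\,ds$ is in fact slightly cleaner than the paper's mean-value-theorem argument through the Fr\'echet derivative, but the substance is identical.
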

\begin{proof} See \cref{sec: app_enet}
\end{proof}

We construct a cover of $\SO(d)$ by defining for all $Q_l\in \mathcal{N}^{\pi\varepsilon}_{\SO}$
\begin{multline}\label{eq:def_phi}
  \phi_{\pi\varepsilon}(Q_l):=\left\{\exp(\pi\mathrm{skew}(\sqrt{2}A))\vert\right. A\in\mathcal{A}\text{ s.t. }\|A-A_l\|_{F}<\varepsilon,\\ A_l\in  \mathcal{N}_{\mathcal{A}}^{\varepsilon},\ \left. Q_l = \exp\left(\pi \mathrm{skew}(\sqrt{2}\ A_l)\right) \right\}.
\end{multline}
$\phi_{\pi\varepsilon}(\cdot)$ maps the Euclidean $\varepsilon$-ball around $A_l$ in $\mathcal{A}$
to a subset in $\SO(d)$ such that
\begin{align}
  \phi_{\pi\varepsilon}(Q_l)\subseteq\{R\mid\|R-Q_l\|_{F}<\varepsilon\}.
\end{align}
To construct a cover of $\mathcal{B}_{\mathrm{skew}}(d)$, we consider the Euclidean ball
\begin{align}
  \{S\in\mathcal{B}_{\mathrm{skew}}(d)\mid\|S-T_{k}\|_{F}<\pi\varepsilon\}\quad \text{for
    all}\quad T_k\in \mathcal{N}^{\varepsilon}_{\mathcal{B}}.
\end{align}

In the following lemma, we provide an upper bound of \eqref{eq: corrupted term} for all $S\in
\mathcal{B}_{\mathrm{skew}}(d)$ and $R\in\SO(d)$. The upper bound shows two sources of
contribution to the deviation of $\eqref{eq: corrupted term}$:
\begin{enumerate}
	\item The first two terms come from the variation of $\eqref{eq: corrupted term}$ \emph{within each
		part of the partition defined by the $\varepsilon$-nets}.
\item The last term is the deviation of $\eqref{eq: corrupted term}$ from its expectation
  \emph{for points in $\varepsilon$-net}.
\end{enumerate}
Here, for a rotation on the $\varepsilon$-net, instead of having set-valued $\beta_i$, for the sake
of convenience we make it single-valued:
\begin{align}\label{eq:def_beta_til}
  \tilde{\beta}_{i}(R):=\begin{cases}
  \frac{Rx_i-y_i}{\|Rx_i-y_i\|_2}& \text{if}\ Rx_i-y_i\neq 0 \\
  0  & \text{otherwise}.
  \end{cases}
\end{align}. 

\begin{lemma}[Upper bound for \eqref{eq: corrupted term}]\label{lemma:bound bad gradient nonconvex}
  Let $\mathcal{N}^{\varepsilon/\pi}_{\mathcal{A}}$ be an Euclidean $\varepsilon/\pi $-net of $\mathcal{A}$,
  then for all $S\in \mathcal{B}_{\mathrm{skew}}(d)$  and $R\in\SO(d)$, and all choices of $\beta_{i}(R)$,
  \begin{multline}
	\label{eq:bound bad gradient nonconvex}
	\left|\sum_{i\in\mathcal{C}}\left\langle RSx_i,  \beta_{i}(R)\right\rangle\right|\leq\  2 \varepsilon\ pN +\sup_{Q_l\in\mathcal{N}^{\varepsilon}_{\SO}}\sup_{R\in\phi_{\varepsilon}(Q_l)}\sum_{i\in\mathcal{C}} \left\|  \beta_{i}(R) - \tilde{\beta}_{i}(Q_l)\right\|_{2}\\
	+ \sup_{
	  T_{k}\in\mathcal{N}^{\varepsilon/\pi}_{\mathcal{B}},\ 
	  Q_l\in\mathcal{N}^{\varepsilon}_{\SO}}\left|\sum_{i\in\mathcal{C}}\left\langle Q_lT_kx_i,  \tilde{\beta}_{i}(Q_l)\right\rangle\right|.
  \end{multline}
  where $\tilde{\beta}_{i}$'s are defined in \eqref{eq:def_beta_til}.
\end{lemma}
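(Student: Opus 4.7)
The plan is to prove the bound by a standard discretization argument: approximate the pair $(R,S)$ by a pair $(Q_l, T_k)$ from the $\varepsilon$-nets constructed in \cref{lemma:cover SO}, and then use the triangle inequality to separate the uniform error incurred by this approximation from the error at the net points themselves.

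\textbf{Step 1: Approximation on the nets.} For any $S\in\mathcal{B}_{\mathrm{skew}}(d)$, pick $T_k\in\mathcal{N}^{\varepsilon/\pi}_{\mathcal{B}}$ with $\|S-T_k\|_F\leq\varepsilon/\pi$. For any $R\in\SO(d)$, use the cover $\{\phi_{\varepsilon}(Q_l)\}_{Q_l\in\mathcal{N}^{\varepsilon}_{\SO}}$ defined in \eqref{eq:def_phi} to find $Q_l$ with $R\in\phi_{\varepsilon}(Q_l)$, so that $\|R-Q_l\|_F\leq\varepsilon$.

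\textbf{Step 2: Telescoping decomposition.} Write
\begin{align*}
\langle RSx_i,\beta_i(R)\rangle
&= \langle(RS-Q_lT_k)x_i,\beta_i(R)\rangle\\
&\quad + \langle Q_lT_kx_i,\beta_i(R)-\tilde{\beta}_i(Q_l)\rangle\\
&\quad + \langle Q_lT_kx_i,\tilde{\beta}_i(Q_l)\rangle.
\end{align*}
For the first piece, bound $\|RS-Q_lT_k\|_2\leq\|R-Q_l\|_2\|S\|_2+\|Q_l\|_2\|S-T_k\|_2\leq\varepsilon+\varepsilon/\pi\leq 2\varepsilon$, using that $\|R\|_2=\|Q_l\|_2=1$ on $\SO(d)$ and $\|S\|_2\leq 1$ on $\mathcal{B}_{\mathrm{skew}}(d)$. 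Since $\|x_i\|_2=1$ and $\|\beta_i(R)\|_2\leq 1$ by Cauchy--Schwarz, each term in the first summand is bounded by $2\varepsilon$, giving a total contribution of at most $2\varepsilon\,pN$ after summing over $i\in\mathcal{C}$.

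\textbf{Step 3: Uniform bounds on the remaining pieces.} For the second piece, apply Cauchy--Schwarz together with $\|Q_lT_kx_i\|_2\leq 1$ to get $\sum_i|\langle Q_lT_kx_i,\beta_i(R)-\tilde{\beta}_i(Q_l)\rangle|\leq \sum_i\|\beta_i(R)-\tilde{\beta}_i(Q_l)\|_2$, then take the supremum over $Q_l\in\mathcal{N}^{\varepsilon}_{\SO}$ and $R\in\phi_{\varepsilon}(Q_l)$. For the third piece, take the supremum over the pair $(T_k,Q_l)$ on the nets. Combining the three contributions yields \eqref{eq:bound bad gradient nonconvex}. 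The argument is essentially mechanical; the only subtlety worth flagging is that $\beta_i(R)$ is set-valued when $Rx_i=y_i$, but the bound is uniform over all admissible single-valued selections because $\|\beta_i(R)\|_2\leq 1$ holds in every case by definition \eqref{def: betai}, so no measurability concern interferes with the three bounds above.
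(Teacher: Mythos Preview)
Your proof is correct and follows essentially the same approach as the paper: discretize $(R,S)$ on the $\varepsilon$-nets, decompose the difference, and bound each piece using Cauchy--Schwarz together with the unit bounds $\|\beta_i(R)\|_2\leq 1$, $\|T_k\|_2\leq 1$, $\|x_i\|_2=1$. The only cosmetic difference is that the paper uses the symmetric splitting $ab-cd=\tfrac{1}{2}[(a-c)(b+d)+(a+c)(b-d)]$ in place of your telescoping $ab-cd=(a-c)b+c(b-d)$; both yield the same per-term bound $2\varepsilon+\|\beta_i(R)-\tilde{\beta}_i(Q_l)\|_2$, and your version is arguably the cleaner of the two.
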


\begin{proof}
  \cref{lemma:cover SO} shows that $\mathcal{N}^{\epsilon/\pi}_{\mathcal{B}}$ is an
  $\varepsilon/\pi$-net of $\mathcal{B}_{\mathrm{skew}}(d)$ and $\mathcal{N}^{\varepsilon}_{\SO}$ is
  an $\varepsilon$-net of $\SO(d)$. Then, for any $S\in \mathcal{B}_{\mathrm{skew}}(d)$ and
  $R\in\SO(d)$, there exists $T_k\in\mathcal{N}^{\epsilon/\pi}_{\mathcal{B}}$ and
  $Q_l\in\mathcal{N}^{\varepsilon}_{\SO}$ such that $\|S-T_k\|_{F}<\varepsilon/\pi$ and
  $R\in\phi_\varepsilon(Q_l)$.
	
  We rewrite $\left|\left\langle Q_lT_kx_i, \tilde{\beta}_{i}(Q_l)\right\rangle - \left\langle
  RSx_i, \beta_{i}(R)\right\rangle\right| $ as
  \begin{align}
	\label{eq: inenet1}
	\frac{1}{2} \left|\left\langle (Q_{l}T_k-RS)x_i,  \tilde{\beta}_{i}(Q_l) +\beta_{i}(R) \right\rangle + \left\langle (Q_{l}T_k+RS)x_i,  \tilde{\beta}_{i}(Q_l) -\beta_{i}(R)\right\rangle\right|,
  \end{align}
  and using Cauchy-Schwartz inequality, we upper bound \eqref{eq: inenet1} by
  \begin{align}
	\label{eq: inenet2}
	\|(Q_{l}T_k-RS)x_i\|_{2} \frac{\|\tilde{\beta}_{i}(Q_l)\|_{2} +\|\beta_{i}(R)\|_{2}}{2}  + \| \tilde{\beta}_{i}(Q_l) -\beta_{i}(R)\|_{2} \frac{\|Q_{l}T_k x_{i}\|_{2}+\|RSx_i\|_{2}}{2}.
  \end{align}
  We have that $\|(Q_{l}T_k-RS)x_i\|_{2}\leq \|Q_{l}-R\|_{F}+\|T_k-S\|_{F}$. Also, by definition of
  $\beta_i(R)$ and $\tilde{\beta}_{i}(R)$, $\|\beta_i(R)\|_{2}\leq 1$ and  $\|\tilde{\beta}_i(R)\|_{2}\leq1$ for any
  $R\in\SO(d)$. Similarly, by definition of $\mathcal{B}_{\mathrm{skew}}$,
  $\|Tx_{i}\|_{2}\leq 1$. Therefore,
  \begin{align}
	\left|\left\langle Q_lT_kx_i, \tilde{ \beta}_{i}(Q_l)\right\rangle - \left\langle RSx_i,
    \beta_{i}(R)\right\rangle\right| &\leq 2\varepsilon + \left\| \tilde{ \beta}_{i}(Q_l)
    -\beta_{i}(R)\right\|_{2}.
  \end{align}
  
  Then for any $R\in\phi(Q_l)$ and $S\in \mathcal{B}_{\mathrm{skew}}(d)$ , we get
  \begin{align}
	\label{eq: bound_bad_gradient_i}
	\left\langle RSx_i, \beta_{i}(R)\right\rangle \leq \left\langle Q_lT_kx_i, \tilde{\beta}_{i}(Q_l)\right\rangle  + 2\varepsilon  + \left\| \tilde{\beta}_{i}(Q_l) -\beta_{i}(R)\right\|_{2}.
  \end{align}	
  Adding \eqref{eq: bound_bad_gradient_i} over all the corrupted points $i\in\mathcal{C}$, and
  taking supremum of the RHS of \eqref{eq: bound_bad_gradient_i} over
  $\mathcal{N}^{\epsilon/\pi}_{\mathcal{B}}$ and $\mathcal{N}^{\varepsilon}_{\SO}$, we get the
  result.
\end{proof}

Next, we provide an upper bound for the last term in the RHS of \eqref{eq:bound bad gradient nonconvex}.
\begin{lemma}\label{lemma:bound in a patch}
  Let $\varepsilon=0.5N^{-\frac{d+2}{2(d-2)}}$, $\mathcal{N}^{\varepsilon}_{\SO}$ the
  ${\varepsilon}$-net of $\SO(d)$ defined in \eqref{eq:def_NB_NS}, $\phi_\varepsilon(\cdot)$ defined
  in \eqref{eq:def_phi}, $\tilde{\beta}_{i}(\cdot)$ defined in \eqref{eq:def_beta_til}, then
	\begin{align}
	\label{eq: bound in a patch}
	\sup_{Q_l\in\mathcal{N}^{\varepsilon}_{\SO}}\sup_{R\in\phi_\varepsilon(Q_l)}\frac{1}{N}\sum_{i\in\mathcal{C}} \left\|  \beta_{i}(R) - \tilde{\beta}_{i}(Q_l)\right\|_{2} &\leq c_{1}d\sqrt{\frac{1}{N}}
	\end{align}
	with probability $1-N^{-c_{2}d^2}$, for $c_{1}$ and $c_{2}$ universal constants.
\end{lemma}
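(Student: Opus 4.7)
The plan is to exploit the fact that the map $u\mapsto u/\|u\|_2$ has Lipschitz constant of order $1/\|u\|_2$ away from the origin, so $\beta_i(R)-\tilde\beta_i(Q_l)$ can only be appreciable when $y_i$ happens to land near $Q_l x_i$. Accordingly, I would introduce a threshold $\tau>2\varepsilon$ (to be chosen later) and split
\[
\mathcal{C}=(\mathcal{C}\setminus\mathcal{C}_\tau(Q_l))\cup\mathcal{C}_\tau(Q_l),\qquad \mathcal{C}_\tau(Q_l):=\{i\in\mathcal{C}:\|Q_l x_i-y_i\|_2<\tau\},
\]
and control the two pieces separately, then union-bound over $Q_l\in\mathcal{N}^\varepsilon_{\SO}$.

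On the safe part $\mathcal{C}\setminus\mathcal{C}_\tau(Q_l)$, combine $\|R-Q_l\|_F\le\varepsilon$ (valid on $\phi_\varepsilon(Q_l)$ up to an absolute constant) with the elementary normalization inequality $\|u/\|u\|_2-v/\|v\|_2\|_2\le 2\|u-v\|_2/\min(\|u\|_2,\|v\|_2)$ applied to $u=Rx_i-y_i$ and $v=Q_l x_i-y_i$. Since $\|u-v\|_2=\|(R-Q_l)x_i\|_2\le\varepsilon$ and both $\|u\|_2,\|v\|_2\ge \tau-\varepsilon\ge\tau/2$, each safe summand is at most $4\varepsilon/\tau$, so the per-$N$ contribution from safe indices is at most $4p\varepsilon/\tau$.

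On the collision part, use the trivial bound $\|\beta_i(R)-\tilde\beta_i(Q_l)\|_2\le 2$. Because $y_i\sim\Unif(\mathbb{S}^{d-1})$ is independent of $x_i$ for $i\in\mathcal{C}$, the event $\|Q_l x_i-y_i\|_2<\tau$ is a spherical-cap event of probability at most $c\tau^{d-1}$. Hence $|\mathcal{C}_\tau(Q_l)|$ is a sum of at most $pN$ independent Bernoullis of total mean at most $cpN\tau^{d-1}$, and Bernstein's inequality yields $|\mathcal{C}_\tau(Q_l)|\le cpN\tau^{d-1}+c'\log(1/\delta)$ with probability $\ge 1-\delta$. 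A union bound over $\mathcal{N}^\varepsilon_{\SO}$, whose cardinality is at most $(6\sqrt d/\varepsilon)^{d(d-1)/2}$ by \cref{lemma:cover SO}, contributes $\log|\mathcal{N}^\varepsilon_{\SO}|=O(d^2\log N)$ in the exponent once the hypothesized $\varepsilon$ is inserted.

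It remains to choose $\tau$ so that both $\varepsilon/\tau$ and $\tau^{d-1}$ are of order $d/\sqrt N$. The main obstacle is precisely this balancing: the net has size exponential in $d^2$, so the Bernstein tail must survive an $O(d^2\log N)$ union-bound loss, yet $\tau$ must simultaneously be small enough to keep $\tau^{d-1}\lesssim d/\sqrt N$ and large enough that $\varepsilon/\tau\lesssim d/\sqrt N$. The hypothesized scale $\varepsilon=0.5\,N^{-(d+2)/(2(d-2))}$ is calibrated precisely so that a feasible $\tau$ exists in this window; a direct computation of the resulting exponents then yields the claimed bound $c_1 d/\sqrt N$ with failure probability at most $N^{-c_2 d^2}$.
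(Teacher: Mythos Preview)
Your proposal is correct and follows essentially the same route as the paper: split the corrupted indices according to whether $\|Q_l x_i-y_i\|_2$ falls below a threshold, use the trivial bound plus a spherical-cap tail estimate (with concentration) on the near-collision set, use the Lipschitz property of $u\mapsto u/\|u\|_2$ on the complement, and finish with a union bound over $\mathcal{N}^\varepsilon_{\SO}$. The only cosmetic differences are that the paper uses Hoeffding's inequality via the sub-Gaussian norm of the Bernoulli indicators (you use Bernstein), and the paper's Lipschitz estimate for the normalized difference is the slightly looser $\varepsilon(\delta-\varepsilon+2)/(\delta-\varepsilon)^2$ rather than your $O(\varepsilon/\tau)$; neither change affects the argument or the final scaling.
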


\begin{proof}
  For each $Q_l\in\mathcal{N}^{\varepsilon}_{\SO}$, we construct the index set 
  \begin{align}
	\mathcal{D}_{\delta}(Q_l) := \{i \in \mathcal{C} \vert \ \|Q_lx_i-y_i\|_2\leq \delta\}.
  \end{align}
  If $i\in \mathcal{C}$ does not belong to $\mathcal{D}_{\delta}(Q_l)$, we can bound $ \|
  \beta_{i}(R) - \tilde{\beta}_{i}(Q_l)\|_{2}$ using smoothness of $\beta_i$, as we will see later.
  Therefore, for all $R\in\phi_\varepsilon(Q_l)$, we split the term in the LHS of \eqref{eq: bound in
    a patch} as
  \begin{align}
	\label{eq:full bound 1}
	\sum_{i\in\mathcal{C}} \|\beta_{i}(R) - \tilde{\beta}_{i}(Q_l)\|_{2}
	&\leq 2|\mathcal{D}_{\delta}(Q_l)| + \sum_{i\in\mathcal{C}\setminus\mathcal{D}_{\delta}(Q_l)} \|  \beta_{i}(R) - \tilde{\beta}_{i}(Q_l)\|_{2}
  \end{align}
  using the fact that $\|\beta_{i}(R) - \tilde{\beta}_{i}(Q_l)\|_{2}\leq 2$ for any value of $i$.
  
  Consider $\mathbbm{1}_{\mathcal{D}_{\delta}(Q_l)}(i)$ the indicator function of
  $i\in\mathcal{D}_{\delta}(Q_l)$. Let $x,y\sim\mathrm{Unif}(\mathbb{S}^{d-1})$. Then, rotation
  invariance and \cref{lemma:exp_x_y sphere} imply that for all $i\in\mathcal{C}$,
  $\mathbbm{1}_{\mathcal{D}_{\delta}(Q_l)}(i)$ is a Bernoulli random variable with probability
  \begin{align}\label{eq:p_bern_D}
	p_{\delta}:=P(\|x-y\|_{2}<\delta) \leq \frac{\delta^{d}}{5 \sqrt d}.
  \end{align}
  Therefore, $\mathbbm{1}_{\mathcal{D}_{\delta}(Q_l)}(i)$ is a subgaussian random variable \eqref{eq:subgaussian norm bernoulli} with norm
  \begin{eqnarray}
	\|\mathbbm{1}_{\mathcal{D}_{\delta}(Q_l)}(i)\|^2_{\psi_{2}}\leq (2\log(5\sqrt{d} \delta^{-d}/2))^{-1}.
  \end{eqnarray}
  Since $|\mathcal{D}_{\delta}(Q_l)|
  =\sum_{i\in\mathcal{C}}\mathbbm{1}_{\mathcal{D}_{\delta}(Q_l)}(i)$, then by Hoeffding's
  inequality (\cref{theorem:hoeffding}), for any $t>0$
  \begin{equation}
	\label{eq:blow up bound}
	\frac{\vert \mathcal{D}_{\delta}(Q_l)\vert}{N} \leq \frac{\delta^{d}}{5 \sqrt d}+ t
    \ \text{ with probability }\  1-\exp\left(-\frac{c N t^2}{p}\log\left(\frac{5 \sqrt
      d}{2\delta^{d}}\right)\right)
  \end{equation}
  for some universal constant $c>0$. Notice that this bound only depends on $Q_l$ and not on other
  $R\in\phi_\varepsilon(Q_l)$.
  
  Now, we want to estimate the second term of the RHS of \eqref{eq:full bound 1}. First, for all
  $i\in\mathcal{C}\setminus\mathcal{D}_{\delta}(Q_l)$, $ \|Q_lx_i-y_i\|_2> \delta$.  Then, for
  $\delta>\varepsilon$, any $R\in\phi_\varepsilon(Q_l)$ satisfies
  \begin{align}
	\|Rx_i - y_i\|_2 \geq\|Q_lx_i-y_i\|_2 - \|(R-Q)x_i\|_2 \geq \delta -\varepsilon>0.
  \end{align}
  Therefore 
  \begin{align}
	\beta_{i}(R) = \frac{R x_{i}-y_i}{\|R x_{i}-y_i\|_{2}}\quad\text{and}\quad 	\tilde{\beta}_{i}(Q_l) = \frac{Q_l x_{i}-y_i}{\|Q_l x_{i}-y_i\|_{2}}.
  \end{align}
  Moreover, 
  \begin{multline}
	\label{eq:non-blow up bound}
	2\|\beta_{i}(R)-\tilde{\beta}_{i}(Q_l)\|_{2}\leq \left \vert \| Rx_i - y_i \|_2^{-1}+ \| Q_lx_i
    - y_i \|_2^{-1}\right \vert \|(R-Q_l)x_i\|_2\\ +\left \vert \| Rx_i - y_i \|_2^{-1}- \| Q_lx_i -
      y_i \|_2^{-1}\right \vert\|(R+Q_l)x_i-2y_i\|_2.
  \end{multline}
  Since $\| Rx_i - y_i \|_2^{-1}$ is differentiable in $\phi_\varepsilon(Q_l)$, then
  \begin{align}
	\left\vert\| Rx_i - y_i \|_2^{-1}-\| Q_lx_i - y_i \|_2^{-1}\right\vert\leq
    \varepsilon\max_{\tilde{R}\in \phi_\varepsilon(Q_l)}\left \|\frac{\tilde{R}x_i-y_i}{\|
      \tilde{R}x_i - y_i\|^3_2}\right\|_2.
  \end{align}
  Hence, for all $i\notin\mathcal{D}_{\delta}(Q_l)$ and $R\in\phi_\varepsilon(Q_l)$, we get the
  bound
  \begin{align}
	\|\beta_{i}(R)-\beta_{i}(\tilde{Q}_l)\|_{2}\leq\frac{\varepsilon(\delta-\varepsilon + 2)}{(\delta-\varepsilon)^{2}}.
  \end{align}
  
  \cref{lemma:cover SO} implies $|\mathcal{N}^{\varepsilon}_{\SO}|\leq (6\sqrt{d}\pi
  \varepsilon^{-1})^{d(d-1)/2}$.  In particular, taking $\varepsilon = N^{-(d+2)/(2(d-2))}$,
  $\delta = (2\varepsilon)^{1/(d+2)}$ and $t= d\sqrt{1.5 N^{-1} c^{-1}}$, by union bound over all
  $Q_l\in\mathcal{N}^{\varepsilon}_{\SO}$, we have that
  \begin{align}
	\sup_{Q_l\in\mathcal{N}^{\varepsilon}_{\SO}}\sup_{R\in\phi_\varepsilon(Q_l)}\frac{1}{N}\sum_{i\in\mathcal{C}}
    \left\| \beta_{i}(R)-\tilde{\beta}_{i}(Q_l)\right\|_{2} &\leq c_{1}d\sqrt{\frac{1}{N}}
  \end{align}
  with probability $1-N^{-c_{2}d^2}$, for $c_{1}$ and $c_{2}$ universal constants.
\end{proof}

We finally conclude this section with a proof of \cref{lemma:behav. bad gradient non convex}.
  \begin{proof}[Proof of \cref{lemma:behav. bad gradient non convex}]
  To control the second term in the RHS of \eqref{eq:bound bad gradient nonconvex}, we use the fact
  that for a given $Q_l\in\mathcal{N}^{\varepsilon}_{\SO}$ and
  $T_k\in\mathcal{N}^{\varepsilon/\pi}_{\mathcal{B}}$,
  \begin{align}
	\left\langle Q_lT_kx_i, \tilde{\beta}_{i}(Q_l)\right\rangle = \left\langle Q_lT_kx_i, \frac{Q_l
      x_i - y_i}{\|Q_l x_i - y_i\|_{2}}\right\rangle\ \text{ almost everywhere for }i\in\mathcal{C}.
  \end{align}
  
  \cref{lemma: MGF xS(x-y)} shows that for any $S\in\mathcal{B}_{\mathrm{skew}}(d)$ and
  $x,y\sim\mathrm{Unif}(\mathbb{S}^{d-1})$ independent,
  \begin{align}
	\E\left[\exp\left(\lambda \left\langle Sx,\frac{x-y}{\|x-y\|_{2}}\right\rangle\right)\right]\leq
    \exp\left(\frac{\lambda^2}{4(d-1)}\right).
  \end{align}
  Then, rotation invariance and \eqref{eq: MGF subgaussian} imply
  \begin{align}
	\left\|\left\langle Q_lT_kx_i, \tilde{\beta}_i(Q_l)\right\rangle\right\|^2_{\psi_2}\leq
    c\left(4(d-1)\right)^{-1},\text{ for all }i\in\mathcal{C}.
  \end{align}
  Using Hoeffding's inequality (\cref{theorem:hoeffding}), we get
  \begin{align}
	\left|\sum_{i\in\mathcal{C}}\left\langle Q_lT_kx_i,
    \tilde{\beta}_{i}(Q_l)\right\rangle\right|\leq t\quad\text{with probability}\quad
    1-2\exp\left(-\frac{c_2(d-1) t^2}{N\ p}\right).
  \end{align}
  
  \cref{lemma:cover SO} implies $ |\mathcal{N}^{\varepsilon}_{\SO}|\leq
  |\mathcal{N}^{\varepsilon/\pi}_{\mathcal{B}}|\leq (6\sqrt{d}\pi \varepsilon^{-1})^{d(d-1)/2}$.
   Let $\varepsilon=N^{-(d+2)/(2(d-2))}$ and $t=2\sqrt{d\log(N)c_2^{-1} N}$. By union bound over all
  $T_k\in\mathcal{N}^{\varepsilon}_{\SO}$ and $Q_l\in\mathcal{N}^{\varepsilon}_{\SO}$, we get
  \begin{align}
	\label{eq: union bound bad grad}
	\sup_{T_{k}\in\mathcal{N}_{1},Q_l\in\mathcal{N}_{2}}\left|\sum_{i\in\mathcal{C}}\left\langle
    Q_lT_kx_i, \tilde{\beta}_{i}(Q_l)\right\rangle\right|\leq c \sqrt{\frac{d\log(N)}{N}}
  \end{align}
  with probability $1-N^{-2d^2}$.  
	
  From \cref{lemma:bound bad gradient nonconvex} we have that for any choice of $\beta_{i}(R)$,
  \begin{multline}
	\sup_{R\in\SO(d),S\in\mathcal{B}_{\mathrm{skew}}}\left|\sum_{i\in\mathcal{C}}\left\langle RSx_i,
    \beta_{i}(R)\right\rangle\right|\leq 2 \varepsilon\ pN + \sup_{
      T_{k}\in\mathcal{N}^{\varepsilon/\pi}_{\mathcal{B}},
      Q_l\in\mathcal{N}^{\varepsilon}_{\SO}}\left|\sum_{i\in\mathcal{C}}\left\langle Q_lT_kx_i,
    \tilde{\beta}_{i}(Q_l)\right\rangle\right|\\
    +\sup_{Q_l\in\mathcal{N}^{\varepsilon}_{\SO}}\sup_{R\in\phi(Q_l)}\sum_{i\in\mathcal{C}} \left\|
    \beta_{i}(R) - \tilde{\beta}_{i}(Q_l)\right\|_{2}
  \end{multline}
  Inserting \eqref{eq: union bound bad grad} and the upper bound \eqref{eq: bound in a patch} in
  \cref{lemma:bound in a patch}, we get that with high probability, for all choices of
  $\beta_{i}(R)$,
  \begin{align}
    \sup_{R\in\SO(d),S\in\mathcal{B}_{\mathrm{skew}}}
    \left|\frac{1}{N}\sum_{i\in\mathcal{C}}\left[\left\langle RSx_i, \beta_{i}(R)\right\rangle -
      \mathbb{E}\left\langle RSx_i, \beta_{i}(R)\right\rangle\right]\right|&\leq c \sqrt{\frac{d(\log
        (N)+d)}{N}}.
  \end{align}
\end{proof}

\section{Numerical Simulations}\label{sec:Num.Sim.}

We compare the results of \cref{thm:sharpness convex} and \cref{thm:imp. recovery} with
numerical solution of \cref{prob:conv2}, and the results of \cref{thm:converg. nonconvex} with the
solution of \cref{prob:LUD}. To solve \cref{prob:conv2}, we use CVX, a package for specifying and
solving convex programs \cite{cvx},\cite{gb08}. We enforce the $\mathrm{conv}\,\SO(d)$ constraint
using the result \cite[Theorem 1.3]{Saunderson_2015}
\begin{align}
\label{def: conv SO(d)}
\mathrm{conv}\, \SO(d)=\left\{X\in\mathbb{R}^{n} : \left[\begin{array}{cc}
0 & X\\
X^\top & 0
\end{array}\right] \preceq I_{2n}, \sum_{i,j=1}^{n} A^{(i,j)} \left[DX\right]_{ij}\preceq (n-2) I_{2^{n-1}}\right\},
\end{align}
where $D:=\mathrm{diag}(1,\dots,1,-1)$, $A^{(i,j)} :=-P_{\mathrm{even}}^\top \lambda_i \rho_j
-P_{\mathrm{even}}$, $\lambda_i := D_{2}^{\otimes i-1}\otimes A_2 \otimes I_{2}^{\otimes n-i}$,
$\rho_i := I_{2}^{\otimes i-1}\otimes A_2 \otimes D_{2}^{\otimes n-i}$ and
\begin{align}
P_{\mathrm{even}} :=\frac{1}{2}\left[\begin{array}{c}
1\\1
\end{array}\right]\otimes I_2^{\otimes n-1} +  \frac{1}{2}\left[\begin{array}{c}
1\\-1
\end{array}\right]\otimes D_{2}^{\otimes n-1} \quad \text{for} \quad D_{2} := \left[\begin{array}{cc}
1&0\\
0 & -1
\end{array}\right].
\end{align} 
To solve \cref{prob:LUD}, we implement a subgradient descent method, which can be viewed as a
discretization of the dynamical system \eqref{eq:dyn. system}, using line search as specified in
\cite[Chapter 3.5]{nocedal_2006}. \cref{alg:non_conv} shows a pseudocode of the implementation. The
length of each step is controlled by $\alpha_{\max}$. To optimize over the manifold $SO(d)$, the
line search is done over the geodesic defined by the Riemannian gradient of $L$ at $R$,
$\partial_RL$.
\begin{algorithm}[htb]
	\begin{small}
		\begin{center}
			\begin{minipage}{0.99\textwidth}
				\begin{algorithmic}[1]
					\Procedure {OptimalRotation}{$\{x\}_{i=1}^{N}$ ,$\{y\}_{i=1}^{N}$,$R^0\in\SO(d)$, $\mathrm{tol}>0$,$\alpha_{\max}$}
					\For {${k}\gets 1,\dots,\mathrm{niter}$}
					\State $\partial L^{k} \gets 0$ 
					\For{$i\gets 1,\dots, N$}
					\State $u^{k}_{i}\gets R^{k-1}x_{i}- y_{i}$ 
					\State $u_{\mathrm{eps}} \gets 0$
					\If {$\|u^{k}_i\|_{2}<\mathrm{tol}$}
					\State $u_{\mathrm{eps}}=\mathrm{tol}$
					\EndIf
					\State $u^{k}_i\gets u^{k}_i/(u_{\mathrm{eps}} + \|u^{k}_i\|_{2})$
					\State $\partial L^{k}\gets dL^{k} + u^{k}_i x_i^\top$  
					\EndFor
					\State $\partial_{R}L^{k} \gets -((R^{k-1})^\top \partial L^{k} - (\partial L^{k})^\top R^{k-1})/(2\ N) $
					\State $\alpha_k\gets$\Call{Line-Search}{$f(\alpha)= L(R^{k-1}\exp(-\alpha\cdot \partial_{R}L^{k});\{x\}_{i=1}^{N} ,\{y\}_{i=1}^{N}),\alpha_{\max}$}
					\State $R^{k}=R^{k-1}\exp(-\alpha_{k}\cdot \partial_{R}L^{k})$
					\If {$\|R^{k}-R^{k-1}\|_{F}<\mathrm{tol}$} 
					\State \textbf{break}
					\EndIf
					\EndFor
					\EndProcedure
				\end{algorithmic}
			\end{minipage}
		\end{center}
	\end{small}
	\caption{Line-Search method over $\SO(d)$.}%
	\label{alg:non_conv}%
\end{algorithm}

\subsection{Uniformly distributed  in $\mathbb{S}^{d-1}$}

In this scenario, given a sample size $N$ and corruption level $p$, we generate
$\{x_i\}_{i=1}^{N}$, $\{y_i\}_{i=1}^{N}$ following the distribution specified in
\eqref{eq:prob_model0} and \eqref{eq:prob_model}. We assume the ground truth rotation $R_0=I$.

\subsubsection{Trade-off between corruption level and sample size}\label{sec:p_vs_N}
We first study the impact of the corruption level and the sample size in recovering the ground
truth.  We consider sample sizes $N\in[4,1024]$ and corruption levels $p \in [0.1,0.99]$. We
generate 10 independent random samples of $\{x_i\}_{i=1}^{N}$, $\{y_i\}_{i=1}^{N}$ for each choice
of $N$ and $p$. For each random sample we minimize $L(A)$ over $\mathrm{conv}\ \SO(d)$ and
$\SO(d)$, taking as initial point the solution of the least squares problem. We say we recover the
ground truth if $\|I-A^{*}\|\leq 10^{-2}$, where $A^{*}$ is the minimizer. We then compute the
empirical probability of exact recovery for each combination of $p,N$, as shown in
\cref{fig:compare errors}.

\begin{figure}[h!]
  \centering

  \subfloat[$\mathrm{conv}\,\SO(3)$\label{fig:error_cvxSO3}]{
  	\centering
  	\includegraphics{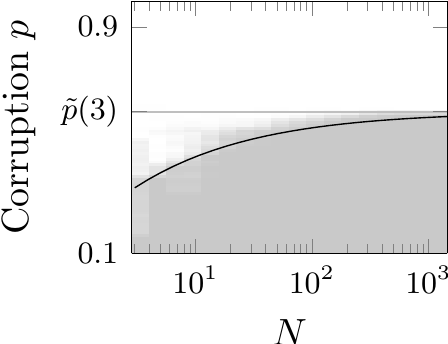}
	}
  \subfloat[][$\mathrm{conv}\,\SO(3)$\\ projected on $\SO(3)$\label{fig:error_cvxprojSO3}]{
  	  	\centering
  	\includegraphics{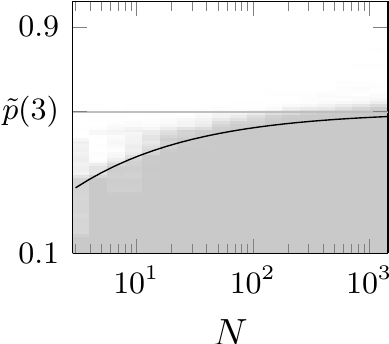}
	}
  \subfloat[$\SO(3)$\label{fig:error_noncvxSO3}]{
  	  	\centering
  	\includegraphics{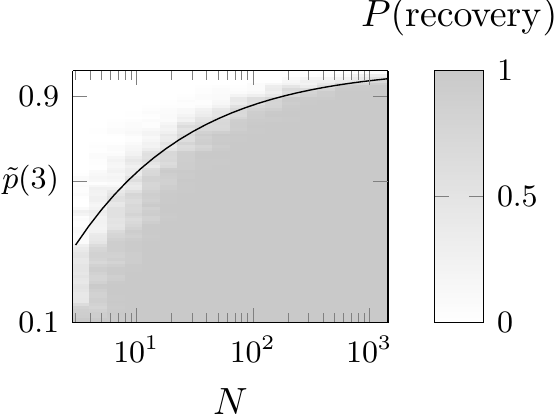}
   }
  \newline
  \subfloat[$\mathrm{conv}\,\SO(6)$\label{fig:error_cvxSO6}]{
  	  	\centering
  	\includegraphics{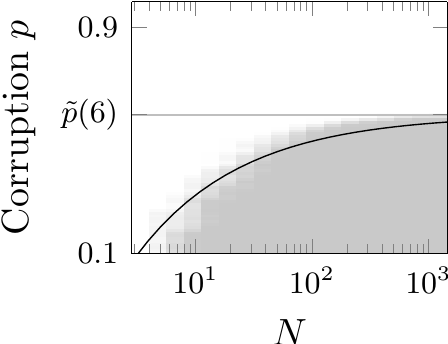}
   }
  \subfloat[][$\mathrm{conv}\,\SO(6)$\\projected on $\SO(6)$\label{fig:error_cvxprojSO6}]{
  	  	\centering
  	\includegraphics{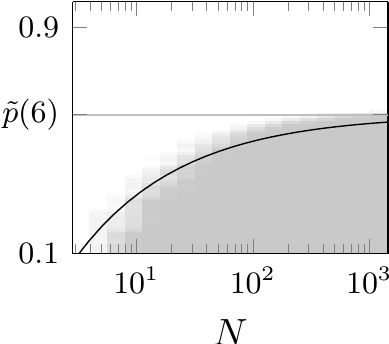}
	}
  \subfloat[$\SO(6)$\label{fig:error_noncvxSO6}]{
  	  	\centering
  	\includegraphics{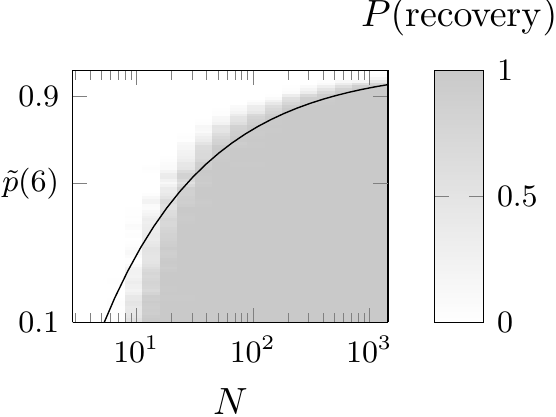}
	}

  \caption{Empirical probability of exact recovery minimizing $L(A)$ over different domains. Data is
    uniformly distributed over $\mathbb{S}^{d-1}$. The dark line is the theoretical upper bound of
    admissible corruption level in \eqref{eq:possible_recovery} and \eqref{eq:
      cond_p_non_cvx}.}\label{fig:compare errors}
\end{figure}

We start by analyzing the probability of exact recovery solving \cref{prob:conv2}, i.e. minimizing
$L(A)$ over $\mathrm{conv}\, \SO(d)$. \cref{fig:error_cvxSO3} and \cref{fig:error_cvxSO6} show that
the transition between exact recovery in all of the experiments and no recovery in any of them
follows the bound \eqref{eq:possible_recovery} of \cref{thm:sharpness convex}, equals to
$\tilde{p}(d)-c\sqrt{\log N/N}$.
 
Since the minimizer of $L(A)$ over $\mathrm{conv}\, \SO(d)$, $A^{*}_{\mathrm{conv}\, \SO(d)}$, is
not necessarily a rotation, one can project $A^{*}_{\mathrm{conv}\, \SO(d)}$ on $\SO(d)$ to find the
closest rotation. Although the theorems in this paper do not provide any insight in this case,
empirically \cref{fig:error_cvxprojSO3} and \cref{fig:error_cvxprojSO6} show that projecting the
solution of \cref{prob:conv2} over $\SO(d)$ does not significantly improve the probability of exact
recovery when the corruption level is above $\tilde{p}(d)$.
 
Then, we inspect the probability of exact recovery solving \cref{prob:LUD}, i.e. minimizing $L(A)$
over $SO(d)$. \cref{fig:error_noncvxSO3} and \cref{fig:error_noncvxSO6} show the transition between
exact recovery in all of the experiments and no recovery in any of them follows the bound \eqref{eq:
  cond_p_non_cvx}, derived from \cref{thm:converg. nonconvex} and equals to $p<1-c\sqrt{\log
  N/N}$. This means that we can always recover the ground truth regardless of the corruption level
if we have enough samples.

\subsubsection{Impact of initialization in dynamical system \eqref{eq:dyn. system}}\label{sec:impact_r0}
Exact recovery of the ground truth rotation minimizing $L(A)$ over $SO(d)$ depends on the selection
of the initial point in the dynamical system \eqref{eq:dyn. system}. Our second experiment explores
the influence of this selection. We consider four sample sizes, $N\in\{24,32,64,128\}$. For each
$N$, we generate a random sample $\{x_i\}_{i=1}^{N}$, $\{y_i\}_{i=1}^{N}$ with $d=4$ and
$p=0.75$. Then, for each sample, we solve the dynamical system \eqref{eq:dyn. system} a hundred
times, each time starting from a rotation $R(0)$ chosen at random such that $\|\log(R(0))\|_{2}$ is
uniformly distributed in $[0,\pi)$.  Given a starting point $R(0)$, if the solution of the dynamical
  system $R(t)$ converges to $I$, we record the time $T_{\mathrm{cvg}}$ such that
  $\|\log(R(T{\mathrm{cvg}}))\|_{2}<10^{-2}$, i.e. $D_{\SO}(R(T_{\mathrm{cvg}}),I)$ is
  small. Otherwise, we include $R(0)$ in the set of starting points without exact recovery. We also
  consider all the points in the trajectory $R(t)$, and, for each of them, we compute
  $T_{\mathrm{cvg}}$ as if the dynamical system started from there.
  
\begin{figure}[h!]
	\centering
	\subfloat[$N=24$\label{fig:traj_24}]{
		\centering
		\includegraphics{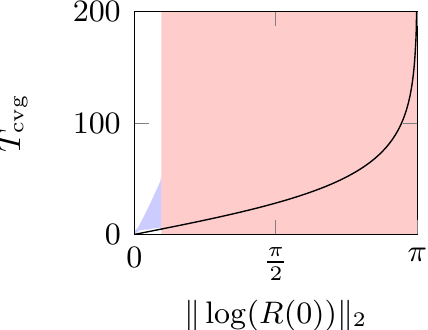}
		}
	\subfloat[$N=32$\label{fig:traj_32}]{
		\centering
		\includegraphics{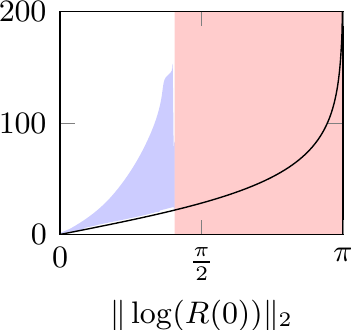}
		}
	\subfloat[$N=64$\label{fig:traj_64}]{
		\centering
		\includegraphics{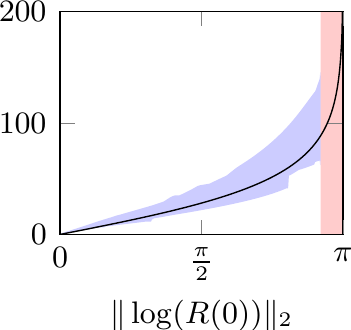}
		}
	\subfloat[$N=128$\label{fig:traj_128}]{
		\centering
		\includegraphics{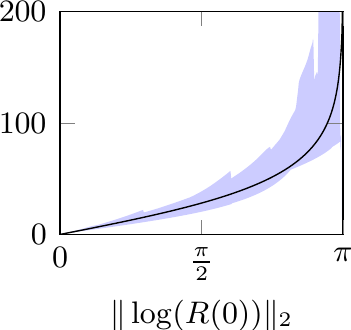}
		}
	\caption{Empirical envelop of convergence time $T_{cvg}$ minimizing $L(A)$ over $\SO(4)$ with
      initial point $R(0)$. The data is distributed uniformly in $\mathbb{S}^{3}$ with corruption
      level $p=0.75$. The red region corresponds to values of $\|\log(R(0))\|_{2}$ where exact
      recovery is not guaranteed. The dark line is the theoretical bound of convergence time
      \eqref{eq:conv rate} in \cref{thm:converg. nonconvex}.}\label{fig:compare trajectory}
\end{figure}
 
\cref{fig:compare trajectory} shows in blue an envelop of the convergence time to $I$,
$T_{\mathrm{cvg}}$, in terms of $\|\log R(0)\|_{2}$, where $R(0)$ is the starting point of the
dynamical system \eqref{eq:dyn. system}. If empirically we observe a dynamical system with starting
point $R(0)$ do not converge to $I$, then all the values greater or equal than $\|\log R(0)\|_{2}$
belong to the region of no exact recovery, denoted in red.

Comparing \cref{fig:traj_24} through \cref{fig:traj_128}, we notice the region of exact recovery
increases as the sample size $N$ gets larger, as described in condition \eqref{eq: in_R_dyn} of
\cref{thm:converg. nonconvex}. Moreover, \cref{fig:traj_128} shows that the convergence time
$T_{\mathrm{cvg}}$ follows the curve $T(\|\log(R(0))\|_{2}) =
\frac{d}{1-p}\cosh^{-1}\left(\sec\left(\frac{\|\log(R(0))\|_{2}}{2}\right)\right) +o(1)$ equals to
the convergence time \eqref{eq:conv rate} in \cref{thm:converg. nonconvex}.

\subsection{Stanford Bunny}

Although the theorems proven in this paper assume that the data is uniformly distributed on
$\mathbb{S}^{d-1}$, we are interested in the generalization of the bounds for other data
distributions. For example, a common source of corruption is mislabeling of data pairs.  We create a
data set using the  CT scan of the
Stanford terra-cota bunny \cite{stanfordBunny}, \cite{stanfordBunnyCT}. We select the points with
highest intensity, and we construct the set of coordinates
$\{b_{j}\}_{j=1}^{N_{Bn}}\subset\mathbb{R}^{3}$ where $N_{Bn}\approx230\mathrm{K}$ entries. We
normalize the points to be centered at zero and to have a maximum length of one. This point cloud
can be modeled as the discrete distribution $\rho_{Bn} (x):=
\frac{1}{N_{Bn}}\sum_{j=1}^{N_{Bn}}{\delta_{b_j}(x)}$. For a given sample size $N$, we generate
samples $\{x_i\}_{i=1}^{N}, \{y_i\}_{i=1}^{N}$ as specified in \eqref{eq:prob_model0} and
\eqref{eq:prob_model}, except now we change from uniform distribution over a sphere to $\rho_{Bn}
(x)$.
\begin{figure}[h!]
  \centering
  \includegraphics[width=0.6\textwidth]{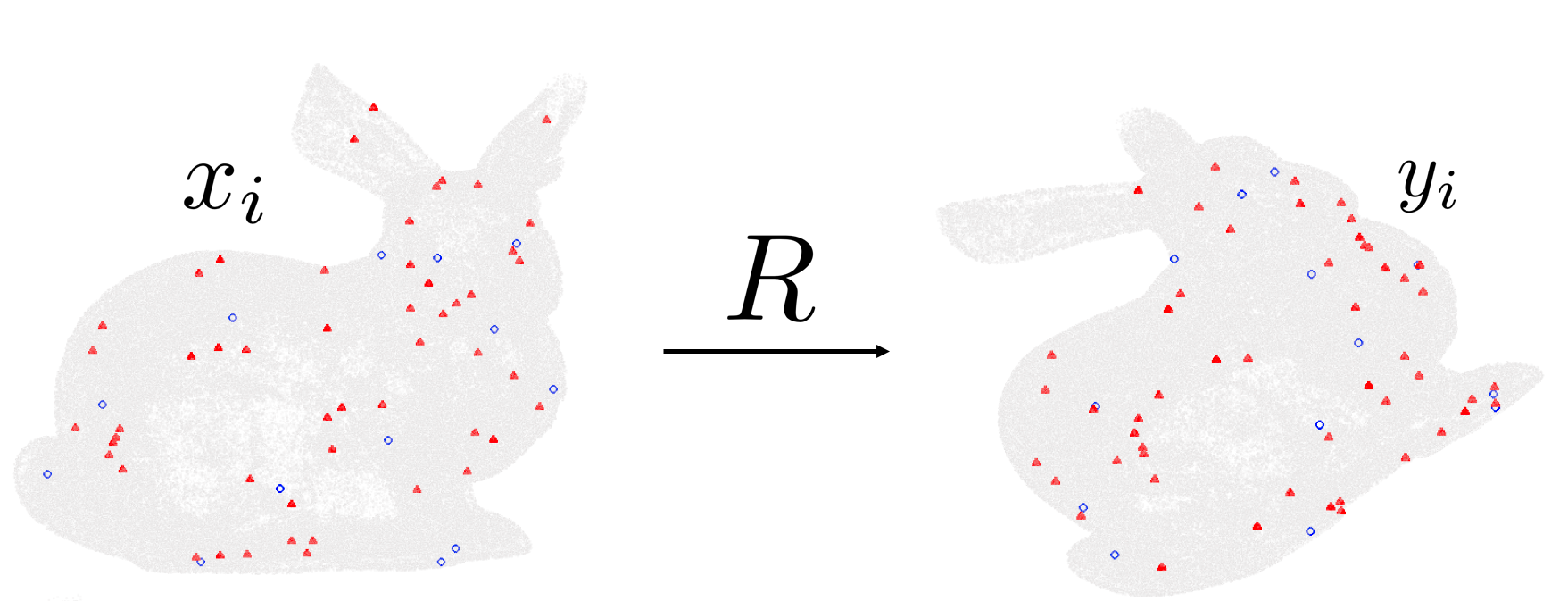}
 	\caption{Data generation using the Stanford Bunny. Uncorrupted points are denoted by blue
      circles, whereas corrupted ones are red triangles. } \label{fig:generating_bunny}
\end{figure}

\subsubsection{Trade-off between corruption level and sample size}\label{sec:bun_p_vs_N}
In \cref{fig:compare errors_bunny}, we reproduce the experiment in \cref{sec:p_vs_N}, except now samples are  
drawn from $\rho_{Bn}(x)$.  Overall, there are not noticeable changes between \cref{fig:compare
  errors} and \cref{fig:compare errors_bunny}.

\begin{figure}[h!]
  \centering
\subfloat[$\mathrm{conv}\,\SO(3)$\label{fig:error_cvx_bun}]{
	\centering
	\includegraphics{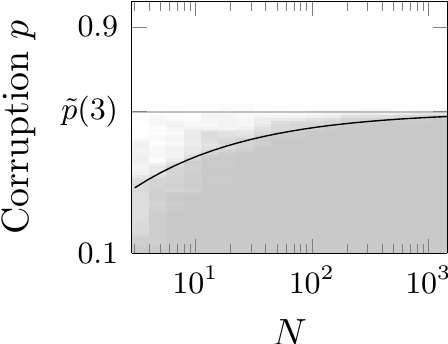}
	}
\subfloat[][$\mathrm{conv}\,\SO(3)$\\projected on $\SO(3)$\label{fig:error_cvxproj_bun}]{
	\centering
	\includegraphics{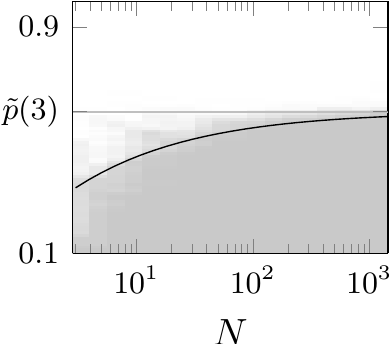}
	}
\subfloat[$\SO(3)$\label{fig:error_noncvx_bun}]{
	\centering
	\includegraphics{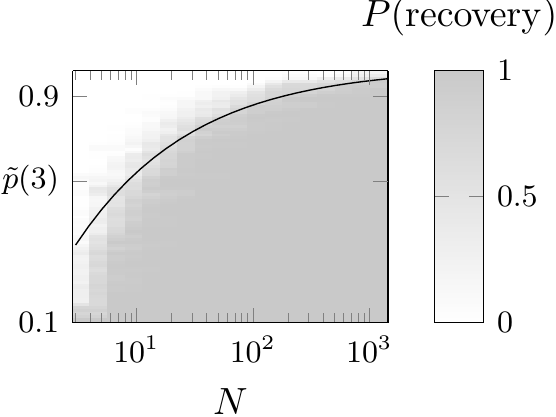}
	}
	\caption{Empirical probability of exact recovery minimizing $L(A)$ over different domains. Data
      is uniformly distributed over Stanford Bunny, $\rho_{Bn}(x)$. The dark line is the theoretical
      upper bound of admissible corruption level in \eqref{eq:possible_recovery} and \eqref{eq:
        cond_p_non_cvx}.}
	\label{fig:compare errors_bunny}
\end{figure}

We start by considering the solution of \cref{prob:conv2}, i.e. optimizing $L(A)$ over
$\mathrm{conv}\,SO(3)$. \cref{fig:error_cvx_bun} shows there is no exact recovery in any of the
experiments when $p>\tilde{p}(3)$ . Additionally the level set of exact recovery in all of the
experiments follows the curve $\tilde{p}(3)-c\sqrt{\log N/N}$, equals to the bound
\eqref{eq:possible_recovery} of \cref{thm:sharpness convex}. Similar to \cref{fig:error_cvxprojSO3},
\cref{fig:error_cvxproj_bun} shows there is not substantial improvement in the probability of exact
recovery when the solution of \cref{prob:conv2} is projected on $\SO(3)$ to get back a rotation
matrix.

Regarding the minimization of $L(A)$ over $\SO(3)$, \cref{fig:error_noncvx_bun} shows the transition
transition between exact recovery in all experiments and no recovery in any experiment follows the
curve $p < 1- c\sqrt{\log N/N}$, equal to the bound \eqref{eq: cond_p_non_cvx}, derived from
\cref{thm:converg. nonconvex}. Similar as the data uniformly distributed over $\mathbb{S}^{2}$, we can
always recover the ground truth regardless of the corruption level if we have enough samples.

\subsubsection{Impact of initialization in dynamical system \eqref{eq:dyn. system}}\label{sec:impact_r0_bun}
In \cref{fig:compare trajectory_bunny}, we reproduce the experiment in \cref{sec:impact_r0}, except
now sampling from $\rho_{Bn}(x)$. Overall, \cref{fig:compare trajectory_bunny} follows the same
behavior as \cref{fig:compare trajectory}. First, the region where exact recovery is not guaranteed
shrinks as the sample size grows, as shown by \cref{fig:traj_24_bun} through
\cref{fig:traj_128_bun}. Second, in \cref{fig:traj_64_bun} and \cref{fig:traj_128_bun} we can see
that the convergence time $T_{\mathrm{cvg}}$ follows the curve $T(\|\log(R(0))\|_{2}) =
\frac{d}{1-p}\cosh^{-1}\left(\sec\left(\frac{\|\log(R(0))\|_{2}}{2}\right)\right) +o(1)$ equals to
the convergence time \eqref{eq:conv rate} in \cref{thm:converg. nonconvex}.
\begin{figure}[h!]
   \centering
	\subfloat[$N=24$\label{fig:traj_24_bun}]{
		\centering
		\includegraphics{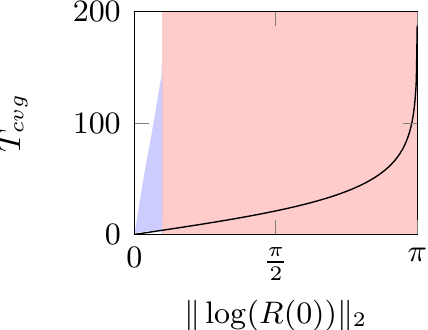}
	}
	\subfloat[$N=32$\label{fig:traj_32_bun}]{
		\centering
		\includegraphics{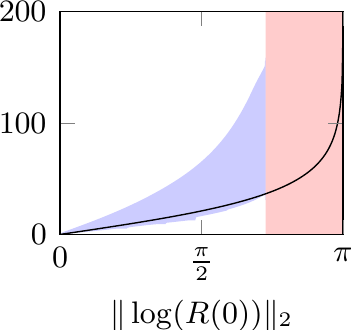}
	}
	\subfloat[$N=64$\label{fig:traj_64_bun}]{
		\centering
		\includegraphics{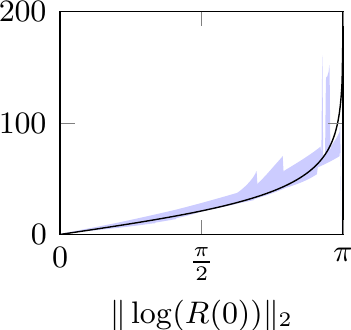}
	}
	\subfloat[$N=128$\label{fig:traj_128_bun}]{
		\centering
		\includegraphics{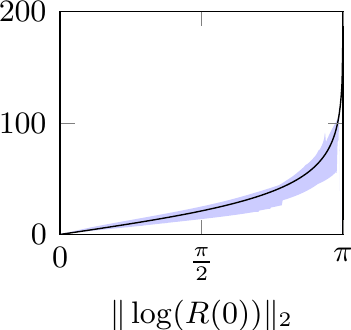}
	}
 	\caption{Empirical envelop of convergence time $T_{cvg}$ minimizing $L(A)$ over $\SO(3)$ with
      initial point $R(0)$. The data is distributed uniformly over Stanford Bunny, $\rho_{Bn}(x)$,
      with corruption level $p=0.75$. The red region corresponds to values of
      $\|\log(R(0))\|_{2}\|)$ where exact recovery is not guaranteed. The dark line is the
      theoretical bound of convergence time \eqref{eq:conv rate} in
      \cref{thm:converg. nonconvex}.} \label{fig:compare trajectory_bunny}
\end{figure}

\section{Conclusion}

We prove the point-set registration problem with outliers can be exactly solved by minimizing the
least-unsquared-deviaiton (LUD) over $\mathbb{R}^{d}$ or $\mathrm{conv}\,\SO(d)$ only when the corruption level $p$ is
less than $\tilde{p}(d)-o(1)$. On the other hand, we proved that we can exactly recover the
ground truth rotation $R_0$ by minimizing the LUD over $\SO(d)$ using the gradient flow
\eqref{eq:dyn. system} for any corruption level $p<1$ and initial point $R(0)$ with
$\|\log(R_0^\top R(0))\|_{2}<\pi$ when the sample size $N$ is large enough. We showed these results are
consistent with numerical simulations for data uniformly distributed on $\mathbb{S}^{d-1}$ and on
discrete points of Stanford bunny. In future work, we shall explore the extension of this
theoretical bounds to arbitrary distributions.

\section*{Acknowledgments}
The work of C. O. is partially supported by the Stanford Graduate Fellowship in Science \&
Engineering. The work of L.Y. is partially supported by the National Science Foundation under award
DMS-1818449 and by the U.S. Department of Energy, Office of Science, Office of Advanced Scientific
Computing Research, Scientific Discovery through Advanced Computing (SciDAC) program.

\appendix
\section{Bounds on sub-gaussian norms for particular random variables}
In this section, we provide some common examples of sub-gaussian random variables.
\begin{enumerate}
	\item \textit{Bounded random variables} \cite[Chapter~2]{vershynin_2018} any bounded random
      variable $X$ is sub-gaussian with
	\begin{align}
	\label{eq: bounded subgaussian norm}
	\|X\|_{\psi_2}\leq \frac{\|X\|_{\infty}}{\sqrt{\log 2}}
	\end{align} 
	\item \textit{Bernoulli random variables} \cite{Buldygin_2012} any $X$ Bernoulli random variable
      with $p\in(0,1/2)$ is sub-gaussian with
	\begin{align}
	\label{eq:subgaussian norm bernoulli}
	\|X-\E X\|^2_{\psi_2}=\frac{1-2p}{2(\log(1-p)-\log(p))}\leq \frac{1}{2|\log\left(2p\right)|}
	\end{align}
	\item \textit{Lipschitz function on the sphere} \cite[Chapter~5]{vershynin_2018} Let $f:
	\mathbb{S}^{d-1}\rightarrow \mathbb{R}$ be a Lipschitz function with Lipschitz constant
	$\|f\|_\text{Lip}$. If $X\sim \text{Unif}(\mathbb{S}^{d-1})$, then $f(X) - \mathbb{E}f(X)$ is
	sub-gaussian with
	\begin{equation}
	\label{eq: isoperimetric}
	\|  f(X) - \mathbb{E}f(X) \|_{\psi_2} \leq \frac{c \|f\|_\text{Lip} }{\sqrt d}.
	\end{equation}
	for some universal constant $c>0$.
\end{enumerate}

\section{Properties of $\mathrm{Unif}(\mathbb{S}^{d-1})$}\label{sec:prop_sphere}
In this section we list useful properties related with data uniformly distributed on the sphere on dimension $d$, $\mathbb{S}^{d-1}$. To start, we use the fact that for
$x\in\mathrm{Unif}(\mathbb{S}^{d-1})$
\begin{align}
\label{eq: exp cov sphere}
\mathbb{E}[xx^\top] =\frac{I_{d}}{d},
\end{align}
as well as rotation invariance of the distribution to derive the following bounds.
\begin{lemma}
	\label{lemma:bound exp. norm}
	Let $x\sim \mathrm{Unif}(\mathbb{S}^{d-1})$  then for any matrix $A\in\mathbb{R}^{d\times d}$
	\begin{align}
	\frac{\|A\|_{*}}{d}\leq\mathbb{E}\|Ax\|_{2}\leq \frac{\|A\|_{F}}{\sqrt{d}}.
	\end{align}
\end{lemma}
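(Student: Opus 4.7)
The plan is to prove the two bounds separately, using the SVD $A = U\Sigma V^\top$ with singular values $\sigma_1,\dots,\sigma_d\geq 0$, together with the rotation invariance of $\mathrm{Unif}(\mathbb{S}^{d-1})$ (which implies $z := V^\top x \sim \mathrm{Unif}(\mathbb{S}^{d-1})$ and $\|Ax\|_2 = \|\Sigma z\|_2$), and the second-moment identity \eqref{eq: exp cov sphere}.

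For the upper bound I would apply Jensen's inequality to the concave map $t\mapsto\sqrt{t}$:
\begin{align*}
\mathbb{E}\|Ax\|_2 \leq \sqrt{\mathbb{E}\|Ax\|_2^2} = \sqrt{\mathrm{Tr}\bigl(A^\top A\,\mathbb{E}[xx^\top]\bigr)} = \sqrt{\mathrm{Tr}(A^\top A)/d} = \|A\|_F/\sqrt{d},
\end{align*}
which is immediate from \eqref{eq: exp cov sphere}.

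For the lower bound, after reducing to the diagonal case via $\mathbb{E}\|Ax\|_2 = \mathbb{E}\|\Sigma z\|_2 = \mathbb{E}\sqrt{\sum_i \sigma_i^2 z_i^2}$, the key deterministic inequality is
\begin{align*}
\Bigl(\sum_i \sigma_i z_i^2\Bigr)^2 = \Bigl(\sum_i (\sigma_i |z_i|)\,|z_i|\Bigr)^2 \leq \Bigl(\sum_i \sigma_i^2 z_i^2\Bigr)\Bigl(\sum_i z_i^2\Bigr) = \sum_i \sigma_i^2 z_i^2,
\end{align*}
by Cauchy--Schwarz and $\|z\|_2=1$. Hence $\|\Sigma z\|_2 \geq \sum_i \sigma_i z_i^2$, and taking expectations together with $\mathbb{E}[z_i^2]=1/d$ (diagonal entries of \eqref{eq: exp cov sphere}) gives
\begin{align*}
\mathbb{E}\|Ax\|_2 \geq \sum_i \sigma_i\,\mathbb{E}[z_i^2] = \frac{1}{d}\sum_i \sigma_i = \frac{\|A\|_*}{d}.
\end{align*}

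Neither direction presents a serious obstacle: the upper bound is a one-line Jensen argument, and the only mildly nontrivial step is recognizing the Cauchy--Schwarz trick that converts $\|\Sigma z\|_2$ (an $\ell_2$ object in $\sigma$) into a quantity linear in the $\sigma_i$'s so that $\mathbb{E}[z_i^2]=1/d$ can be invoked termwise to recover the nuclear norm.
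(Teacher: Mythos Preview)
Your proposal is correct and essentially identical to the paper's proof: both use the SVD plus rotation invariance to reduce to the diagonal case, apply Jensen for the upper bound, and for the lower bound use the Cauchy--Schwarz inequality $\sum_i \sigma_i z_i^2 = \langle \Sigma z, z\rangle \leq \|\Sigma z\|_2\|z\|_2 = \|\Sigma z\|_2$ followed by $\mathbb{E}[z_i^2]=1/d$. The only difference is that you write the Cauchy--Schwarz step out in coordinates, whereas the paper phrases it as the inner-product form $\mathbb{E}\|\Sigma_A V_A x\|_2 \geq \mathbb{E}[x^\top V_A^\top \Sigma_A V_A x]$.
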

\begin{proof}
  For the lower bound, we consider the full SVD decomposition of $A=U_{A}\Sigma_AV_{A}^\top$, then by
  Cauchy-Schwartz inequality we get
  \begin{align}
	\label{eq:main estimate convex}
	\E \|Ax\|_2= \E \|\Sigma_{A}V_{A}x\|_2\geq\E \left[x^\top V_{A}^\top \Sigma_{A}V_{A}x\right]=
    \frac{\Tr(\Sigma_{A})}{d}=\frac{\|A\|_{*}}{d}.
  \end{align}
  For the upper bound, we use the concavity of the square root to get
  \begin{align}
	\label{eq:res 3 estimate expectation norm}
	\mathbb{E}\|Ax_1\|_{2}\leq \sqrt{\Tr(A^\top A\  \mathbb{E}[x_1 x_1^\top])} = \frac{\|A\|_F}{\sqrt{d}}.
  \end{align} 
\end{proof}

For quantities depending on $x,y\sim\mathrm{Unif}(\mathbb{S}^{d-1})$ independently, it is handy to
express $y$ in spherical coordinates with respect to $x$. Consider $U_{xy}:=[u_{xy,1},u_{xy,2}]$
such that $U^\top_{xy}U_{xy}=I$ and
\begin{align}
  x=U_{xy}\left[\begin{array}{c}
      1\\0\end{array}\right],\quad y =U_{xy}\left[\begin{array}{c}
      \cos\theta_1\\\sin\theta_1\end{array}\right] ,\quad  0\leq \theta_1 < \pi.
\end{align}
Some examples of this change of coordinates are
\begin{align}
  \label{eq: spherical coordinates}
  \|x-y\|_{2} = 2 \sin\frac{\theta_1}{2},\ \|x+y\|_{2} = 2
  \cos\frac{\theta_1}{2},\ \frac{x-y}{\|x-y||_{2}} = \sin\frac{\theta_1}{2} x +
  \cos\frac{\theta_1}{2}u_{xy,2}
\end{align}
where $u_{xy,2}$ is uniformly distributed on the sphere $\mathbb{S}^{d-2}$ orthogonal to $x$. Now,
to compute expectations, we use the area element of a sphere,
\begin{align}
  \label{eq: area element}
  d_{\mathbb{S}^{d-1}}V = \sin^{d-2}(\theta_1)\sin^{d-3}(\theta_2)\cdots\sin(\theta_{d-2})d\theta_1\cdots d\theta_{d-1}
\end{align}
and the definition of the Beta function,
\begin{align}
  B\left(\frac{m+1}{2},\frac{n+1}{2}\right) :=2\int_{0}^{\pi/2}\sin^{n}\theta\cos^{m}\theta d\theta
  =
  \frac{\Gamma\left(\frac{m+1}{2}\right)\Gamma\left(\frac{n+1}{2}\right)}{\Gamma\left(\frac{m+n}{2}+1\right)}.
\end{align}
where the Gamma function satisfies $\Gamma(n)=(n-1)!$ for any $n\in \mathbb{N}^{+}$. We list some
useful results of expectations relating two independent random variables in $\mathbb{S}^{d-1}$ in
the following lemma.

\begin{lemma}\label{lemma:exp_x_y sphere}
  Let $x,y\sim \mathrm{Unif}(\mathbb{S}^{d-1})$ i.i.d. then
  \begin{subequations}
	\begin{align}
	  \label{eq: useful exp 1}
	  \E \|x-y\|_{2} &= 2\frac{B\left(d-1,\frac{1}{2}\right)}{B\left(\frac{d-1}{2},\frac{1}{2}\right)},\\
	  \label{eq: useful exp 2}
	  P(\|x-y\|_{2}<\delta)&\leq \frac{\delta^d}{5\sqrt{d}}\quad\text{for}\quad \delta < 2,\\
	  \label{eq: useful exp 3}
	  \E\left[ \frac{1}{\|x-y\|_{2}\|x+y\|_{2}}\right]&= \frac{B\left(\frac{d-2}{2},\frac{1}{2}\right)}{2B\left(\frac{d-1}{2},\frac{1}{2}\right)},\\
	  \label{eq: useful exp 4}
	  \E \left[\frac{x-y}{\|x-y\|_{2}} x^\top \right]& =\frac{1}{d} \frac{B\left(d-1,\frac{1}{2}\right)}{B\left(\frac{d-1}{2},\frac{1}{2}\right)}\ I_{d}.
	\end{align}
  \end{subequations}
\end{lemma}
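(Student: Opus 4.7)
} The unifying idea is to exploit rotation invariance to reduce every expectation to a one-dimensional integral in the angular variable $\theta_1\in[0,\pi]$ between $x$ and $y$, whose density is proportional to $\sin^{d-2}\theta_1$ by the area element \eqref{eq: area element}. The normalizing constant is then precisely $\int_0^\pi \sin^{d-2}\theta\,d\theta=B((d-1)/2,1/2)$, which already appears in all four target formulas. The essential identities I will use repeatedly are $\|x-y\|_2=2\sin(\theta_1/2)$ and $\|x+y\|_2=2\cos(\theta_1/2)$ from \eqref{eq: spherical coordinates}, together with the doubling identity $\|x-y\|_2\|x+y\|_2=2\sin\theta_1$.

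For (a) I would write $\E\|x-y\|_2=\bigl(\int_0^\pi 2\sin(\theta/2)\sin^{d-2}\theta\,d\theta\bigr)/B((d-1)/2,1/2)$, expand $\sin^{d-2}\theta=2^{d-2}\sin^{d-2}(\theta/2)\cos^{d-2}(\theta/2)$, and substitute $u=\theta/2$ to reach $2^d\int_0^{\pi/2}\sin^{d-1}u\cos^{d-2}u\,du=2^{d-1}B((d-1)/2,d/2)$. The identification with $2B(d-1,1/2)$ follows from the Legendre duplication formula $\Gamma(d-1)=2^{d-2}\Gamma((d-1)/2)\Gamma(d/2)/\sqrt{\pi}$ applied to both sides. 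For (c) the same conditioning on $\theta_1$, combined with $\|x-y\|_2\|x+y\|_2=2\sin\theta_1$, reduces the expectation to $\tfrac12\int_0^\pi\sin^{d-3}\theta\,d\theta/B((d-1)/2,1/2)=B((d-2)/2,1/2)/(2B((d-1)/2,1/2))$ by a direct application of the beta integral.

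For (d) the computation is an easy symmetry argument: set $M:=\E[(x-y)\|x-y\|_2^{-1}x^\top]$ and note that $Rx,Ry$ have the same joint law as $x,y$ for every $R\in\SO(d)$, so $RMR^\top=M$, forcing $M=\mu I_d$ for some scalar $\mu$. To pin $\mu$ down take traces: $d\mu=\E[(x-y)^\top x/\|x-y\|_2]=\E[(1-x^\top y)/\|x-y\|_2]=\tfrac12\E\|x-y\|_2$ since $1-x^\top y=\tfrac12\|x-y\|_2^2$. Part (a) then gives the claimed constant. Part (b) is the one step where care is required and I expect the main obstacle: I would change variables from $\theta_1$ to $s=\|x-y\|_2$, noting that $ds=\cos(\theta_1/2)d\theta_1$ and $\sin^{d-2}\theta_1\,d\theta_1=s^{d-2}(1-s^2/4)^{(d-3)/2}ds$, so that
\begin{equation*}
P(\|x-y\|_2<\delta)=\frac{1}{B((d-1)/2,1/2)}\int_0^{\delta}s^{d-2}(1-s^2/4)^{(d-3)/2}\,ds.
\end{equation*}
Bounding the integrand and using a Stirling-type lower bound $B((d-1)/2,1/2)\gtrsim\sqrt{1/d}$ yields a cap-volume estimate of the desired form; the delicate point is tracking the constant $1/(5\sqrt d)$, which will require keeping sharp estimates on $\Gamma((d-1)/2)/\Gamma(d/2)$ rather than order-of-magnitude ones.
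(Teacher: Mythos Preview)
Your approach for (a) and (c) is precisely the paper's: reduce to a one-dimensional integral in $\theta_1$ with density proportional to $\sin^{d-2}\theta_1$ and evaluate via beta functions, with the Legendre duplication formula supplying the final identification in (a). For (d) you take a different route than the paper, which instead uses the explicit decomposition $(x-y)/\|x-y\|_2 = \sin(\theta_1/2)\,x \pm \cos(\theta_1/2)\,u_{xy,2}$ from \eqref{eq: spherical coordinates} together with $\E[u_{xy,2}\mid x]=0$ to reduce the expectation to $\E[\sin(\theta_1/2)]\cdot\E[xx^\top]$. Your rotation-invariance-plus-trace argument is a clean alternative that avoids introducing the auxiliary unit vector $u_{xy,2}$ at all; both reach the same constant via $\tfrac12\,\E\|x-y\|_2$.

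Your instinct that (b) is the delicate part is right, but the obstacle is not the constant. Your change of variables is correct, and bounding $(1-s^2/4)^{(d-3)/2}\le 1$ gives
\[
P(\|x-y\|_2<\delta)\ \le\ \frac{1}{B\!\left(\tfrac{d-1}{2},\tfrac12\right)}\int_0^{\delta} s^{\,d-2}\,ds
\ =\ \frac{\delta^{\,d-1}}{(d-1)\,B\!\left(\tfrac{d-1}{2},\tfrac12\right)},
\]
with exponent $d-1$, not $d$. In fact the stated bound \eqref{eq: useful exp 2} is false as written: for $d=3$ the probability equals $\delta^2/4$ exactly, which exceeds $\delta^3/(5\sqrt{3})$ for every $\delta<2$. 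The paper's own derivation slips an extra factor of $\sin\theta$ into the half-angle substitution, writing $\sin^{d-1}\theta\cos^{d-2}\theta$ where $\sin^{d-2}\theta\cos^{d-2}\theta$ is correct; once that is repaired the paper's $u=\sin^2\theta$ computation also lands on $\delta^{d-1}$. So your method is sound---the discrepancy is with the stated exponent, not your argument, and no amount of sharpening the Gamma-ratio constant will close that gap.
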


\begin{proof}
  Using the change of variables given by \eqref{eq: spherical coordinates} and the area element
  \eqref{eq: area element}, for \eqref{eq: useful exp 1}, we get
  \begin{align}
	\E \|x-y\|_{2} = 2\frac{\int_{0}^\pi \sin\frac{\theta_{1}}{2}\sin^{d-2}{\theta_1} d\theta_{1}}{\int_{0}^\pi \sin^{d-2}{\theta_1} d\theta_{1}}= 2^{d-1}\frac{B\left(\frac{d}{2},\frac{d-1}{2}\right)}{B\left(\frac{d-1}{2},\frac{1}{2}\right)} = 2\frac{B\left(d-1,\frac{1}{2}\right)}{B\left(\frac{d-1}{2},\frac{1}{2}\right)}  ,
	\end{align}
	using the product identity of the Gamma function $\Gamma(n)\Gamma(n+1/2) = 2^{1-2n}\Gamma(1/2) \Gamma(2n)$.
	
	Using the same change of variables in \eqref{eq: spherical coordinates} for \eqref{eq: useful exp 2}, we get
	\begin{align}
	P(\|x-y\|\leq\delta)
	=P\left(\sin\frac{\theta_{1}}{2}\leq\frac{\delta}{2}\right)
	=2^{d-1}\frac{\int_{0}^{\sin^{-1}(\delta/2)}\sin^{d-1}\theta\cos^{d-2}\theta d\theta}{2\int_{0}^{\pi/2} \sin^{d-2}{\theta} d\theta}.
	\end{align}
	Then, letting $u=\sin^2\theta$, and using $(1-\delta^2/4)<1$ we get,
	\begin{align}
	  P(\|x-y\|\leq\delta)\leq
      2^{d-2}\frac{\int_{0}^{\delta^{2}/4}u^{\frac{d}{2}-1}du}{B\left(\frac{d-1}{2},\frac{1}{2}\right)}=\frac{\delta^{d}}{2d
        B\left(\frac{d-1}{2},\frac{1}{2}\right)}\leq \frac{\delta^{d}}{5 \sqrt d}.
	\end{align}
	
	For \eqref{eq: useful exp 3}, we use the representation on \eqref{eq: spherical coordinates} to get
	\begin{align}
	  \E\left[ \frac{1}{\|x-y\|_{2}\|x+y\|_{2}}\right] = \frac{\int_{0}^\pi
        \sin^{-1}\theta_{1}\sin^{d-2}{\theta_1} d\theta_{1}}{2\int_{0}^\pi \sin^{d-2}{\theta_1}
        d\theta_{1}} =
      \frac{B\left(\frac{d-2}{2},\frac{1}{2}\right)}{2B\left(\frac{d-1}{2},\frac{1}{2}\right)}.
	\end{align}
	
	For \eqref{eq: useful exp 4}, we notice that $\E\left[u_{xy,2}|x\right] = 0$ given that
    $u_{xy,2}$ is uniformly distributed in the sphere orthogonal to $x$. Therefore using \eqref{eq:
      useful exp 1}, we get
	\begin{align}
	\E \left[\frac{x-y}{\|x-y\|_{2}} x^\top \right] 
	=\E \left[\sin\frac{\theta_1}{2} x  x^\top \right] = \frac{1}{d}\frac{B\left(d-1,\frac{1}{2}\right)}{B\left(\frac{d-1}{2},\frac{1}{2}\right)}\ I_{d}.
	\end{align}
\end{proof}

At last, we are interested in inner products of the form
\begin{align}
  \left\langle S x, \frac{x-y}{\|x-y\|_{2}}\right\rangle\quad \text{where}\quad S\in\mathcal{S}_{\mathrm{skew}}(d).
\end{align}
Since $S$ is skew-symmetric, then $x^\top Sx=0$. Therefore if $Sx\neq 0$, we can use $Sx/\|Sx\|_{2}$
as a second orthonormal vector to express $y$ in spherical coordinates. Assuming $d\geq 3$, we
consider
\begin{align}
  U_{xSy}:=[u_{xSy,1},u_{xSy,2},u_{xSy,3}],\quad U^\top_{xSy}U_{xSy}=I 
\end{align}
such that for some $0\leq \theta_1,\theta_2 < \pi$
\begin{align}
  x=U_{xSy,1}\left[\begin{array}{c}
      1\\0\\0\end{array}\right],\quad Sx=\|Sx\|_{2}U_{xSy,2}\left[\begin{array}{c}
      0\\1\\0\end{array}\right],y =U_{xSy,3}\left[\begin{array}{c}
      \cos\theta_1\\\sin\theta_1\cos\theta_{2}\\\sin\theta_{1}\sin\theta_{2}\end{array}\right].
\end{align}
Therefore, under this change of coordinates 
\begin{align}
  \label{eq: spherical coordinates 2}
  \left\langle S x, \frac{x-y}{\|x-y\|_{2}}\right\rangle=\|Sx\|_{2}\cos\frac{\theta_1}{2}\cos{\theta_{2}}.
\end{align}
\begin{lemma}
  \label{lemma: MGF xS(x-y)}
  Let $x,y\in\mathrm{Unif}(\mathbb{S}^{d-1})$ i.i.d and $S\in\mathcal{S}_{skew}(d)$.  If $\|S\|_{2}\leq1$, then 
  \begin{align}
	\label{eq: MGF xS(x-y)}
	\E \left[\exp\left(\lambda \left\langle Sx,
      \frac{x-y}{\|x-y\|_{2}}\right\rangle\right)\right]\leq \exp
    \left(\frac{\lambda^2}{4(d-1)}\right),\quad \lambda>0
  \end{align}
\end{lemma}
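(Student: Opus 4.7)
The plan is to compute the even moments of $X := \langle Sx, (x-y)/\|x-y\|_2\rangle$ explicitly and then sum the Taylor expansion of its MGF, rather than bounding the MGF in two steps. The naive strategy, first applying a sub-gaussian MGF estimate in $\cos\theta_2$ and then using $\cos^2(\theta_1/2)\leq 1$, only yields $\exp(\lambda^2/(2(d-1)))$; recovering the sharp constant $1/(4(d-1))$ will require a direct moment-by-moment cancellation.

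First, I would invoke \eqref{eq: spherical coordinates 2} to write $X = \|Sx\|_2\cos(\theta_1/2)\cos\theta_2$, where, given $x$, the angles $\theta_1$ and $\theta_2$ are independent with densities proportional to $\sin^{d-2}\theta_1$ and $\sin^{d-3}\theta_2$ on $[0,\pi]$. Since $\cos\theta_2$ is symmetric about zero, $X$ is also symmetric and all odd moments vanish; hence
\[
\mathbb{E}\bigl[X^{2k}\mid x\bigr] = \|Sx\|_2^{2k}\,\mathbb{E}\bigl[\cos^{2k}(\theta_1/2)\bigr]\,\mathbb{E}\bigl[\cos^{2k}\theta_2\bigr].
\]

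The key calculation evaluates the two angular moments via the same Beta-function manipulations used in \cref{lemma:exp_x_y sphere}. Setting $a:=(d-1)/2$, a direct computation (using that $U := \cos^2(\theta_1/2) = (1+\cos\theta_1)/2 \sim \mathrm{Beta}(a,a)$, and the substitution $t=\cos\theta_2$) gives
\[
\mathbb{E}\bigl[\cos^{2k}(\theta_1/2)\bigr] = \frac{\Gamma(a+k)\,\Gamma(2a)}{\Gamma(a)\,\Gamma(2a+k)}, \qquad \mathbb{E}\bigl[\cos^{2k}\theta_2\bigr] = \frac{(2k-1)!!\,\Gamma(a)}{2^{k}\,\Gamma(a+k)}.
\]
The $\Gamma(a+k)$ and $\Gamma(a)$ factors telescope against each other, and using $(2k-1)!! = (2k)!/(2^k k!)$ the product simplifies to
\[
\mathbb{E}\bigl[\cos^{2k}(\theta_1/2)\bigr]\,\mathbb{E}\bigl[\cos^{2k}\theta_2\bigr] = \frac{(2k-1)!!\,\Gamma(d-1)}{2^{k}\,\Gamma(d-1+k)} = \frac{(2k)!}{2^{2k}\,k!\,\prod_{j=0}^{k-1}(d-1+j)}.
\]

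Finally, I would insert this into the Taylor expansion of the MGF. Using $\|Sx\|_2 \leq \|S\|_2 \leq 1$ and the crude lower bound $\prod_{j=0}^{k-1}(d-1+j)\geq (d-1)^k$,
\[
\mathbb{E}\bigl[e^{\lambda X}\bigr] \leq \sum_{k\geq 0}\frac{\lambda^{2k}}{(2k)!}\,\mathbb{E}\bigl[X^{2k}\bigr] \leq \sum_{k\geq 0}\frac{1}{k!}\left(\frac{\lambda^2}{4(d-1)}\right)^{k} = \exp\!\left(\frac{\lambda^2}{4(d-1)}\right).
\]
The main obstacle, and the reason a direct moment method is needed, is precisely this $\Gamma$-function cancellation: the average $\mathbb{E}[\cos^2(\theta_1/2)] = 1/2$ must enter multiplicatively at every order, rather than being absorbed into the crude bound $\cos^2(\theta_1/2)\leq 1$; otherwise one loses the critical factor of two in the constant.
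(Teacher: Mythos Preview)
Your proposal is correct and follows essentially the same route as the paper's proof: both use the spherical-coordinate representation \eqref{eq: spherical coordinates 2}, observe that odd moments vanish, evaluate the even angular moments via Beta/Gamma integrals, exploit the same telescoping of $\Gamma(a+k)$ against $\Gamma(a)$, and close with the bound $\Gamma(d-1)/\Gamma(d-1+k)\le (d-1)^{-k}$ before summing the exponential series. Your use of the $\mathrm{Beta}(a,a)$ law for $\cos^2(\theta_1/2)$ and double-factorial notation is a cosmetic repackaging of the paper's direct integral computation, and your remark about why the crude bound $\cos^2(\theta_1/2)\le 1$ would lose the factor of two is exactly the point.
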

\begin{proof}
  To show \eqref{eq: MGF xS(x-y)}, we compute a bound for each of the moments using the change of
  variables in \eqref{eq: spherical coordinates 2} and the area element \eqref{eq: area element},
  \begin{align}
	\label{eq: exp moments r}
	\E  \left\langle Sx,  \frac{x-y}{\|x-y\|_{2}}\right\rangle^r &= \E \left[\|{S}x\|_2^r \cos^r\left(\frac{\theta_1}{2}\right) \cos(\theta_2)^r\right]\nonumber\\ 
	&\leq \E \left[\cos^r\frac{\theta_1}{2}\right]\E \left[ \sin^r\theta_2\right]\nonumber\\ 
	&=  \frac{\int_{0}^{\pi}\cos^{r}\left(\frac{\theta_1}{2}\right)\sin^{d-2}\theta_1\ d\theta_{1}}{\int_{0}^{\pi}\sin^{d-2}\theta_1 \ d\theta_1}\frac{\int_{0}^{\pi}\cos^{r}\theta_2\sin^{d-3}\theta_2\ d\theta_2}{\int_{0}^{\pi}\sin^{d-3}\theta_2\ d\theta_2}.
  \end{align}
  Since $\cos\theta_2$ is an odd function around $\pi/2$, then  \eqref{eq: exp moments r} is 0 for odd values of $r$. For even values of $r$,
  \begin{align}
	\E  \left\langle Sx,  \frac{x-y}{\|x-y\|_{2}}\right\rangle^r &\leq \frac{\int_{0}^{\pi/2}\cos^{r+d-2}\theta_1 \sin^{d-2}\theta_1 d\theta_1}{\int_{0}^{\pi/2} \cos^{d-2}\theta_1\sin^{d-2}\theta_1 d\theta_1 }\frac{\int_{0}^{\pi/2}\cos^{r}\theta_2\sin^{d-3}\theta_2 d\phi}{\int_{0}^{\pi/2}\sin^{d-3}\theta_2 d\theta_2}\nonumber\\ 
	&=\frac{B\left(\frac{r+d-1}{2},\frac{d-1}{2}\right)}{B\left(\frac{d-1}{2},\frac{d-1}{2}\right)}\frac{B\left(\frac{d}{2}-1,\frac{r+1}{2}\right)}{B\left(\frac{d}{2}-1,\frac{1}{2}\right)}\nonumber\\
	&=\frac{\Gamma(d-1)\Gamma\left(\frac{r+1}{2}\right)}{\Gamma\left(\frac{r}{2}+d-1\right)\Gamma\left(\frac{1}{2}\right)}.
  \end{align}
  
  Adding all the moments together and using the product identity of the Gamma function, we get
  \begin{align}
	\E\left[\exp\left(\lambda  \left\langle Sx,  \frac{x-y}{\|x-y\|_{2}}\right\rangle\right)\right] &\leq\sum_{r=0}^\infty \frac{\lambda^{2r}}{\Gamma(2r+1)}\frac{\Gamma\left(\frac{2r+1}{2}\right)}{\Gamma\left(\frac{1}{2}\right)} \frac{\Gamma\left({d-1}\right)}{\Gamma\left(r+d-1\right)}\nonumber\\
	&\leq\sum_{r=0}^\infty \left(\frac{\lambda}{2}\right)^{2r}\frac{1}{\Gamma(r+1)} \frac{\Gamma\left({d-1}\right)}{\Gamma\left(r+d-1\right)}.
  \end{align}
  Then, using the bound
  $\left(\Gamma\left({d-1}\right)\right)\left(\Gamma\left(r+d-1\right)\right)^{-1}<(d-1)^{-r}$, we
  get \eqref{eq: MGF xS(x-y)}.
\end{proof}

\section{Proof of \cref{lemma:bound difference norms}: Bounds for $\|u+v\|_{2}-\|v\|_{2}$}\label{sec:bound difference norms}
Using Taylor series around $\|u\|_2=0$, we construct lower and upper bound for
$\|u+v\|_{2}-\|v\|_{2}$, that are useful in dealing with the contribution of corrupted points to
$L(A)-L(I)$.\newline 

  \begin{proof}[Proof of \cref{lemma:bound difference norms}]
  Let $\alpha\geq 0$ and $\theta\in(0,\pi)$ such that $\alpha = \|u\|_{2}/\|v\|_{2}$ and
  $\cos\theta = (u^\top v)/(\|u\|_{2}\|v\|_{2})$. Then, we can rewrite $\|u+v\|_{2}-\|v\|_{2}$ as
  a function of $\alpha$
  \begin{align}
	f(\alpha)=\|v\|_{2}\left(\sqrt{1+2\alpha\cos\theta+\alpha^2}-1\right).
  \end{align}
  Our goal is to use Taylor expansion of $f(\alpha)$ around $\alpha=0$ and bound the high degree contributions. First note that 
  \begin{align}
	\frac{df}{d\alpha}(\alpha) =\frac{ \|v\|_{2}(\alpha + \cos\theta)}{\left(\sin^2\theta+(\cos\theta+\alpha)^2\right)^2},\quad  \frac{d^2f}{d\alpha^2}(\alpha)= \frac{\|v\|_{2}\sin^2\theta}{\left(\sin^2\theta+(\cos\theta+\alpha)^2\right)^{3/2}}\leq \frac{\|v\|_{2}}{\sin\theta}
  \end{align}
  Therefore, 
  \begin{align}
	\alpha  \|v\|_{2}\cos\theta 	\leq f(\alpha) \leq \alpha  \|v\|_{2}\cos\theta + \frac{\alpha^2}{2}\frac{\|v\|_{2}}{\sin\theta}.
  \end{align}
  Replacing the values of $\alpha$ and $\theta$, we get the bound.
\end{proof}

\section{Proof of \cref{lemma:concentration of residuals}}\label{sec:con_res}

  \begin{proof}[Proof of \cref{lemma:concentration of residuals}]
  Let $X_A$ be
  \begin{equation}\label{eq:Xp in talagrand}
	X_A(\{x_{i}\}_{i=1}^{n}):=\sum_{i = 1}^n f_i(A,x_{i})  - \mathbb{E} f_i(A,x_{i}).
  \end{equation}
  Then, using Hoeffding's inequality (\cref{theorem:hoeffding}), we get
  \begin{multline}
	\|(X_{A}-\mathbb{E}X_{A})-(X_{B}-\mathbb{E}X_{B})\|^2_{\psi_{2}}\leq\\
    c^2\sum_{i = 1}^{n}\|(f_i(A,x_{1})-\mathbb{E}f_i(A,x_{1}))-(f_i(B,x_{1})-\mathbb{E}f_i(B,x_{1}))\|_{\psi_{2}}^2.
  \end{multline}
  
  For any subgaussian random variable $X$, $\|X-\E X\|_{\psi_2}\leq \tilde{c}\|X\|_{\psi_2}$ with
  absolute constant $\tilde{c}$. Then, to satisfy condition \eqref{eq: subgaussian
    increment} gives
  \begin{align}
	\|(X_{A}-\mathbb{E}X_{A})-(X_{B}-\mathbb{E}X_{B})\|^2_{\psi_{2}} \leq c'^2nK\|A-B\|_{F}^{2}.
  \end{align}
  
  To measure the size of $\mathcal{B}_{1}(d):=\{A\in\mathbb{R}^{d\times d}\mid\|A\|_{F}\leq 1 \}$, let $G\in \mathbb{R}^{d\times d}$ be a random
  matrix where each entry distributes independent $\mathcal{N}(0,1)$, then from the
  definitions in \eqref{eq:gauss width rad def}, the Gaussian width of $ \mathcal{B}_{1}(d)$ is
  \begin{equation}
	\label{eq:gaussian width of norm1}
	w\left( \mathcal{B}_{1}(d)\right) = \mathbb{E}\sup_{\|A\|_F\leq 1} \langle A, G\rangle =
    \mathbb{E} \|G\|_F \leq (\mathbb{E} \|G\|_F^2)^{1/2} = d,
  \end{equation}
  and the radius of $\mathcal{B}_{1}(d)$ is $\text{rad}\left( \mathcal{B}_{1}(d)\right) = 1$. Since $X_{A}-\mathbb{E}X_{A}$
  satisfies \eqref{eq:cond_tal}, \cref{theorem:talagrand} implies that
  \begin{equation}
	\mathbb{P}\left\{\sup_{ \|A\|_F\leq 1}  \vert X_A \vert \leq c'K\sqrt{n}(d+u) \right\} \geq 1-2\exp(-u^2).
  \end{equation}
  Taking $u=\sqrt {\log n}/K$ gives the desired bound.
\end{proof}

\section{Proof of \cref{lemma:dyn_SO}: Dynamics of $\|\log R(t)\|_{2}$}\label{sec: app_proof_dyn}
We first introduce the following lemma.
\begin{lemma}[Derivative of matrix logarithm]
  \label{lemma:der. trace. log}
	For $A\in\mathbb{C}^{d\times d}$ with no eigenvalues in $\mathbb{R}^{-}$ 
	\begin{align}
	\frac{d}{dt}\log(A) &= \int_{0}^{1}\left[s(A-I)+I\right]^{-1}\frac{dA}{dt} \left[s(A-I)+I\right]^{-1} ds.	
	\end{align}
\end{lemma}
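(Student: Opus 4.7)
The plan is to proceed from the integral representation of the principal matrix logarithm,
\[
\log(A) = \int_0^1 (A-I)\left[I+s(A-I)\right]^{-1}\, ds,
\]
which holds whenever $A\in\mathbb{C}^{d\times d}$ has no eigenvalues on $\mathbb{R}^-$. The integrand is well-defined on $[0,1]$ because the eigenvalues of $M(s):=I+s(A-I)=sA+(1-s)I$ are $s\lambda+(1-s)$ for $\lambda\in\sigma(A)$: if $\lambda$ has nonzero imaginary part so does $s\lambda+(1-s)$ for $s>0$, and if $\lambda$ is real it is positive by hypothesis, so $s\lambda+(1-s)>0$. The representation itself can be verified first for diagonalizable $A$ by evaluating the integrand in the eigenbasis, where each scalar integrand yields $\int_0^1 (\lambda-1)/(1+s(\lambda-1))\,ds=\log\lambda$ along a straight-line path from $1$ to $\lambda$ that never crosses the branch cut; it then extends by analytic continuation to all admissible $A$, since both sides are holomorphic on the open set of matrices whose spectrum avoids $\mathbb{R}^-$.

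Next I would differentiate under the integral sign, which is justified by joint smoothness of the integrand on the compact interval $[0,1]$. Writing $M=M(s,t):=I+s(A(t)-I)$ and using $\frac{d}{dt}M^{-1}=-M^{-1}\dot{M}M^{-1}$ with $\dot{M}=s\dot{A}$, the product rule gives
\[
\frac{d}{dt}\log(A)=\int_0^1 \dot{A}\, M^{-1}\,ds\;-\;\int_0^1 s(A-I)M^{-1}\, \dot{A}\, M^{-1}\,ds.
\]
The single genuine step is the algebraic identity $s(A-I)M^{-1}=(M-I)M^{-1}=I-M^{-1}$, which follows immediately from the definition of $M$. Substituting into the second integral produces a term $\int_0^1 \dot{A}\,M^{-1}\,ds$ that exactly cancels the first integral, leaving
\[
\frac{d}{dt}\log(A)=\int_0^1 M^{-1}\,\dot{A}\,M^{-1}\,ds = \int_0^1\left[s(A-I)+I\right]^{-1}\frac{dA}{dt}\left[s(A-I)+I\right]^{-1}ds,
\]
as claimed.

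I do not foresee a substantive obstacle; the argument is a careful but routine matrix-calculus manipulation, and once the integral representation is in hand the remainder is bookkeeping. The only point demanding attention is the non-commutativity of $\dot{A}$ with both $A-I$ and $M^{-1}$, which forces the scalar $s$ and the matrix $A-I$ to be kept on the correct sides of $\dot{A}$ throughout; any slip there would destroy the cancellation and the symmetric final form.
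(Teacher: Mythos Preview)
Your proposal is correct and follows essentially the same approach as the paper: start from the integral representation $\log(A)=\int_0^1 (A-I)[s(A-I)+I]^{-1}\,ds$, differentiate under the integral using the product rule and $\frac{d}{dt}M^{-1}=-M^{-1}\dot M M^{-1}$, and collapse the two resulting terms via $s(A-I)M^{-1}=I-M^{-1}$. The paper's version is terser and omits the justifications you supply for well-definedness and exchange of limits, but the argument is identical.
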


\begin{proof}
  For any $A\in\mathbb{C}^{d\times d}$ with no eigenvalues in $\mathbb{R}^{-}$, $\log A$ can be defined as \cite{higham_2008}
  \begin{align}
	\log(A)=\int_{0}^{1} (A-I)\left[s(A-I)+I\right]^{-1} ds.
  \end{align}
  Therefore, let $A$ be a function of $t$, then the derivative of $\log A$ with respect to $t$ is
  \begin{align}
	\frac{d\log A}{dt} &= \int_{0}^{1} \frac{dA}{dt}\left[s(A-I)+I\right]^{-1}  - s(A-I)\left[s(A-I)+I\right]^{-1}\frac{dA}{dt} \left[s(A-I)+I\right]^{-1} ds \nonumber\\
	&= \int_{0}^{1}\left[s(A-I)+I\right]^{-1}\frac{dA}{dt} \left[s(A-I)+I\right]^{-1} ds.	
  \end{align}
\end{proof}

  \begin{proof}[Proof of \cref{lemma:dyn_SO}]
  This lemma is tailored from \cite[Lemma 1]{Baccioti_99}. Notice $\|\log(R(t))\|_{2}$ is
  absolutely continuous in any interval $\mathcal{I}\subset \mathbb{R}$ because it is a composition
  of a locally Lipschitz function and an absolutely continuous function. Therefore $\frac{d\|\log
    R(t)\|_{2}}{dt}$ and $\frac{dR(t)}{dt}$ exists almost everywhere and
  \begin{align}\label{eq:def_dr_dt_S}
	\frac{dR(t)}{dt}\in R(t)\cdot \mathcal{S}(R(t))\quad \text{almost everywhere}.
  \end{align}
	
  To define $\|\log R(t)\|_{2}$, we consider the SVD of $\log R(t)$
  \begin{align}
	\label{eq: decomp LogR}
	\log R(t) = \sum_{j=1}^{\left\lfloor d/2 \right\rfloor} \sigma_{j}(t)U_{j}(t) A_{2} U_{j}(t)^\top
  \end{align}
  with
  \[
  U(t) = \left[\begin{array}{ccccc}
	  U_1(t) & \dots & U_{\left\lfloor d/2 \right\rfloor}(t)
	\end{array}\right]
  \]
  such that $\{U_{j}(t)\}_{j=1}^{\left\lfloor d/2 \right\rfloor}\subset\mathbb{R}^{d\times 2}$,
  $U(t)^\top U(t) = I$, and $\pi\geq\sigma_{1}(t)\geq\dots\geq\sigma_{\left\lfloor d/2
    \right\rfloor}(t)\geq 0$. Then
  \begin{align}
	\|\log R(t)\|_{2} := \max\{\sigma_1(t),\cdots,\sigma_{\left\lfloor d/2 \right\rfloor}(t)\}.
  \end{align}
  
 To study $\frac{d\|\log R(t)\|_{2}}{dt}$, we first consider the dynamics of all the
  singular values of $\log R(t)$. Let $\mathrm{blockdiag}(\{V_i\}_{i=1}^{k})$ be the block diagonal matrix
  with diagonal blocks $V_{i}$.  We denote
  \begin{align}
	\Sigma(t) &:= \mathrm{blockdiag}\left(\begin{array}{ccc}
	  \sigma_{1}(t)I_2 ,& \cdots ,&\sigma_{\left\lfloor d/2 \right\rfloor}(t)I_2
	\end{array}\right)
  \end{align}
  and
  \[
  V(t) := \left[\begin{array}{ccccc}U_1(t) A_{2}^\top &\dots & U_{\left\lfloor d/2
        \right\rfloor}(t)A_{2}^\top
	\end{array}\right],
  \]
  then $\log R(t) = U(t) \Sigma(t) V(t)^\top$. 
  
  Recall that for any $A\in\mathbb{R}^{d\times d}$ with SVD decomposition
  $A=U_{A}\Sigma_{A}V_{A}^\top$,
  \begin{align}
	\frac{d\Sigma_{A}}{dt} = I\odot\left(U_{A}^\top\frac{dA}{dt}V_{A}\right)
  \end{align}
  where $\odot$ denotes the entry-wise product. Then, in particular, for $\log R$
  \begin{align}
	\label{eq: der.sing val 1st part}
	\frac{d\Sigma(t)}{dt}
	&=I\odot \left[U(t)^\top\left( \frac{d\log R}{dt}(t) \right)V(t)\right].
  \end{align}
  Since $I$ is a diagonal matrix, then for any matrix $M$, $ I\odot M=I\odot\mathrm{diag}(M)$. In
  particular,
  \begin{align}
	\label{eq: der.sing val 2nd part}
	\frac{d\Sigma}{dt} &=I\odot \mathrm{blockdiag}\left( \left\{U_{j}^\top \left( \frac{d\log R}{dt}
    \right) U_{j}A_2^\top\right\}_{j=1}^{\left\lfloor d/2 \right\rfloor} \right).
  \end{align}
  
  To compute $ \frac{d\log R(t)}{dt}$, we use the fact that $\frac{dR}{dt}(t) = R(t) S(t)$,
  $S(t)\in\mathcal{S}(t)$, then \cref{lemma:der. trace. log} implies that
  \begin{align}\label{eq:def_dlog}
	\frac{d\log R}{dt}
	&= \int_{0}^{1}\left[s(R-I)+I\right]^{-1}R S \left[s(R-I)+I\right]^{-1} ds.
  \end{align}
  To evaluate \eqref{eq:def_dlog}, we rewrite $R(t)$ using the planar decomposition in \cref{lemma:
    planar decomposition R},
  \begin{align}
	\label{eq:plan. dec. R}
	R(t) - I_{d} = \sum_{i=1}^{\left\lfloor d/2
      \right\rfloor}U_{i}(t)(R_{\sigma_{i}(t)}-I_2)U_{i}(t)^\top,\quad R_{\sigma_i}=\exp(\sigma_i
    A_{2})\in\SO(2).
  \end{align}
  Using decomposition \eqref{eq:plan. dec. R}, for all $i = 1,\dots,\left\lfloor d/2 \right\rfloor$,
  we simplify the product
  \begin{align}
	U_{i}^\top \left( \frac{d\log R}{dt} \right)U_{i} &=
    \int_{0}^{1}\left[s(R_{\sigma_{i}}-I)+I\right]^{-1}R_{\sigma_{i}}U_{i}^\top S\ U_{i}
    \left[s(R_{\sigma_{i}}-I)+I\right]^{-1} ds \label{eq: UiLogUi}
  \end{align}
  Notice that $U_{i}^\top S U_{i}\in\mathcal{S}_{\mathrm{skew}}(2)$ then $U_{i}^\top SU_{i} =
  \frac{1}{2} \left\langle S, U_{i} A_{2} U_{i}^\top\right\rangle A_{2}$. Therefore \eqref{eq:
    UiLogUi} becomes
  \begin{align}
	\label{eq: UiLogUi2}
	U_{i}^\top \left( \frac{d\log R}{dt}\right)U_{i} &= \frac{1}{2} \left\langle S, U_{i} A_{2}
    U_{i}^\top\right\rangle \int_{0}^{1}\left[s(R_{\sigma_{i}}-I)+I\right]^{-1}R_{\sigma_{i}}A_{2}
    \left[s(R_{\sigma_{i}}-I)+I\right]^{-1} ds\nonumber\\ &= \frac{1}{2} \left\langle S, U_{i} A_{2}
    U_{i}^\top\right\rangle A_{2} ,
  \end{align}
  given that $A_{2}$ commutes with any rotation in $\SO(2)$.
  
  Inserting \eqref{eq: UiLogUi2} in RHS of \eqref{eq: der.sing val 2nd part} we get
  \begin{align}
	\label{eq: der.sing val 3rd part}
	\frac{d\Sigma}{dt}
	&= \mathrm{blockdiag}\left(\left\{
	\frac{1}{2} \left\langle S(t), U_{j}(t) A_{2} U_{j}(t)^\top\right\rangle I_{2}\right\}_{j=1}^{\left\lfloor d/2 \right\rfloor}
	\right).
  \end{align}
  Using the definition of $\Sigma(t)$, we get
  \begin{align}
	\frac{d\sigma_{i}(t)}{dt}
	&=
	\frac{1}{2} \left\langle S(t), U_{i}(t) A_{2} U_{i}(t)^\top\right\rangle \quad  i=1,\dots, \left\lfloor d/2 \right\rfloor.
  \end{align}
  Therefore, the generalize gradient of $\|\log R\|_{2}$ with respect to $t$ is given by
  \begin{align}
	\label{eq:grad norm Log}
	\frac{d \|\log R(t)\|_{2}}{dt} &= \mathrm{conv}\left\{\frac{1}{2} \left\langle S(t), U_{i}(t)
    A_{2} U_{i}(t)^\top\right\rangle\mid \text{for all } i\ \text{s.t.}\ \|\log R(t)\|_{2}=\sigma_i(t)\right\}.
  \end{align}
  Using the decomposition of $\log R$ in \eqref{eq: decomp LogR}, we have
  \begin{align}
  \{U_{i}(t)\mid
  \ i\ \text{s.t.}\ \|\log R(t)\|_{2}=\sigma_i(t)\} = \mathcal{U}(R(t))
  \end{align}
   as defined in \eqref{eq:def
    UR}. Then, for all $t$ such that $\frac{d\|\log R(t)\|_{2}}{dt}$ exists, then RHS of
  \eqref{eq:grad norm Log} is a singleton. Therefore
  \begin{align}
	\frac{d\|\log R(t)\|_{2}}{dt} \in \{a\mid \exists S \in\mathcal{S}(t), \langle S,UA_{2}U^\top
    \rangle = 2a\ \forall U\in \mathcal{U}(R(t))\}\
  \end{align}
  almost everywhere.
\end{proof}

\section{Proof of \cref{lemma:cover SO}: Constructing $\varepsilon$-net for $\SO(d)$}\label{sec: app_enet} 
We first introduce the following theorem.
\begin{lemma}[Lipschitz constant of the matrix exponential in $\mathcal{S}_{\mathrm{skew}}(d)$]
  \label{lemma: matrix exp}
  Let $X,Y$ skew-symmetric matrices, then 
  \begin{align}
	\|\exp(X)-\exp(Y)\|_{F}\leq \left\|X-Y\right\|_{F}.
  \end{align}
\end{lemma}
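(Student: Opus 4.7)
The plan is to exploit two facts: (i) the straight-line segment between two skew-symmetric matrices stays in $\mathcal{S}_{\mathrm{skew}}(d)$, and (ii) the exponential of a skew-symmetric matrix is orthogonal, so it acts as a Frobenius-norm isometry by left or right multiplication.

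First I would introduce the linear interpolation $Z(t) := X + t(Y-X)$ for $t\in[0,1]$. Since $\mathcal{S}_{\mathrm{skew}}(d)$ is a linear subspace, $Z(t)\in\mathcal{S}_{\mathrm{skew}}(d)$ and $Z'(t) = Y-X$. Consequently $\exp(s Z(t))\in\SO(d)$ for every $s,t\in[0,1]$, so in particular $\exp(s Z(t))$ is orthogonal.

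Next I would invoke the standard integral formula for the differential of the matrix exponential,
\begin{equation*}
\frac{d}{dt}\exp(Z(t)) \;=\; \int_0^1 \exp(sZ(t))\,Z'(t)\,\exp((1-s)Z(t))\,ds,
\end{equation*}
which follows from expanding $(Z(t)+\varepsilon Z'(t))^n$ to first order in $\varepsilon$ and summing. Applying the fundamental theorem of calculus in $t$ yields
\begin{equation*}
\exp(Y) - \exp(X) \;=\; \int_0^1\!\!\int_0^1 \exp(sZ(t))\,(Y-X)\,\exp((1-s)Z(t))\,ds\,dt.
\end{equation*}

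Finally I would take Frobenius norms. Because $\exp(sZ(t))$ and $\exp((1-s)Z(t))$ are orthogonal, unitary invariance of $\|\cdot\|_F$ gives $\|\exp(sZ(t))(Y-X)\exp((1-s)Z(t))\|_F = \|Y-X\|_F$ pointwise. The triangle inequality for integrals then delivers
\begin{equation*}
\|\exp(Y)-\exp(X)\|_F \;\leq\; \int_0^1\!\!\int_0^1 \|Y-X\|_F\,ds\,dt \;=\; \|Y-X\|_F.
\end{equation*}

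There is no real obstacle here: the only subtlety is being careful with the integral representation of $d\exp$ (some sources state it with the factors in the opposite order, but the form above is standard and easy to verify from the power series). Everything else is forced by the observation that the convex hull of two skew-symmetric matrices is skew-symmetric and that orthogonal conjugation/multiplication preserves the Frobenius norm.
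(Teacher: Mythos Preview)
Your proof is correct and follows essentially the same approach as the paper: both use the integral representation of the Fr\'echet derivative of $\exp$, the fact that the segment between $X$ and $Y$ lies in $\mathcal{S}_{\mathrm{skew}}(d)$, and unitary invariance of $\|\cdot\|_F$. The only cosmetic difference is that the paper invokes a mean-value-type bound (``there exists $Z$ on the segment with $\|\exp(X)-\exp(Y)\|_F\le\|d\exp_Z(X-Y)\|_F$''), whereas you write out the full double integral via the fundamental theorem of calculus; your version is arguably more transparent, but the content is the same.
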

\begin{proof}
  Let the directional derivative of matrix exponential of $Z$ in direction $X-Y$ be \cite{higham_2008}
  \begin{align}
	d\exp_{Z}(X-Y) = \int_{0}^{1}\exp((1-t)Z)(X-Y)\exp(tZ)\ dt.
  \end{align}
  By continuity of the matrix exponential, we have that for $X,Y\in \mathcal{S}_{\mathrm{skew}}(d)$,
  there exist $Z\in\mathcal{S}_{\mathrm{skew}}(d)$ such that
  \begin{align}
	\|\exp(X)-\exp(Y)\|_{F}&\leq \left\|\int_{0}^{1}\exp((1-t)Z)(X-Y)\exp(tZ)\ dt\right\|_{F},
  \end{align}
  given that  $\mathcal{S}_{\mathrm{skew}}(d)$ is a vector space. By triangle inequality,
  \begin{align}
	\|\exp(X)-\exp(Y)\|_{F}&\leq\int_{0}^{1} \left\|\exp((1-t)Z)(X-Y)\exp(tZ)\right\|_{F}\ dt = \|X-Y\|_{F},
  \end{align}
  where the last equality follows from the fact that the exponential of a skew-symmetric matrix is a
  rotation, and the Frobenius norm is invariant under rotations.
\end{proof}

 \begin{proof}[Proof of \cref{lemma:cover SO}]
  Recall $\mathcal{A} := \{S\in \mathbb{R}^{d\times d} \ \mid \ \|\mathrm{skew}(S)\|_2 \leq
  1/\sqrt{2},\ S(i,j)=0\ \text{if}\ i\geq j \}$.  Let $\mathrm{triu}(\cdot):\mathbb{R}^{d\times
    d}\rightarrow\mathbb{R}^{d\times d}$ be defined as
  \begin{align}
	\mathrm{triu}(A)(i,j) := \begin{cases}
	  A(i,j) & \text{if}\quad i\leq j\\
	  0 & \text{otherwise}.
	\end{cases}
  \end{align}
  Notice that for any $S\in \mathcal{B}_{\mathrm{skew}}(d)$, $\mathrm{triu}(2^{-1/2}S)
  \in\mathcal{A}$. Therefore, there exists $\tilde{S}\in\mathcal{N}^{\varepsilon}_{\mathcal{A}}$
  such that $\left\|\text{triu}\left(2^{-1/2} S\right)-\tilde{S}\right\|_{F}\leq \varepsilon$. Since
  $\|\mathrm{skew}(\sqrt{2}\tilde{S})\|_{2}\leq1$ then
  \begin{align}
	\label{eq: enet skew}
	\left\|S - \mathrm{skew}({\sqrt{2}}\ \tilde{S})\right\|_{F}=\left\|\text{triu}\left(2^{-1/2}S\right)-\tilde{S}\right\|_{F}\leq\varepsilon.
  \end{align}
  Similarly, for $R\in \SO(d)$, let $S_{R} = \log(R)/\pi$. Then $R=\exp(\pi S_{R})$ and
  $S_{R}\in\mathcal{B}_{\mathrm{skew}}(d)$. Therefore, there exists $\tilde{S}_{R}\in\mathcal{N}_{\mathcal{A}}^{\varepsilon}$
  such that $\|S_R-\mathrm{skew}(\sqrt{2}\tilde{S}_{R})\|_{F}\leq \varepsilon$. Then, by \cref{lemma: matrix
    exp}, 
  \begin{align}
	\|R-\exp(\pi\mathrm{skew}(\sqrt{2}\ \tilde{S}_{R}))\|_{F} \leq \|\pi S_R -
    \pi\mathrm{skew}(\sqrt{2}\tilde{S}_{R}))\|_{F}\leq \pi\ \varepsilon.
  \end{align}
		
  We can also provide an upper bound to the size of $\mathcal{N}^{\varepsilon}_{\mathcal{B}}$ and
  $\mathcal{N}^{\pi\varepsilon}_{\SO}$ given the size of
  $\mathcal{N}^{\varepsilon}_{\mathcal{A}}$. To estimate the size of
  $\mathcal{N}^{\varepsilon}_{\mathcal{A}}$, we consider
  \begin{align}
    \mathcal{A}_{F} := \{S\in \mathbb{R}^{d\times d} \ \vert\  \|S\|_F \leq \sqrt{d},\ S(i,j)=0\ \text{if}\ i\leq j  \}.
  \end{align}
  Let $\mathcal{N}^{\varepsilon/2}_{\mathcal{A}_F}$ be the smallest $\varepsilon/2$-net of
  $\mathcal{A}_{F}$. Since $\mathcal{A}\subset\mathcal{A}_{F}$, \cref{thm:enet monotone} implies that,
  for $\mathcal{N}^{\varepsilon}_{\mathcal{A}}$ the smallest $\varepsilon$-net of $\mathcal{A}$,
  $|\mathcal{N}^{\varepsilon}_{\mathcal{A}}|\leq|\mathcal{N}^{\varepsilon/2}_{\mathcal{A}_F}|$. Now,
  to know the size of $|\mathcal{N}^{\varepsilon/2}_{\mathcal{A}_F}|$, notice that $\mathcal{A}_{F}$
  is an Euclidean ball of radius $\sqrt{d}$ in a vector space of dimension $d(d-1)/2$. Then,
  \cref{thm:enet} implies that
  \begin{align}
    |\mathcal{N}^{\pi \varepsilon}_{\SO}|\leq |\mathcal{N}^{\varepsilon}_{\mathcal{B}}|\leq
    |\mathcal{N}^{\varepsilon}_{\mathcal{A}}|\leq \vert \mathcal{N}^{\varepsilon/2}_{\mathcal{A}_{F}}
    \vert \leq \left(6 \sqrt{d}\ \varepsilon^{-1}\right)^{\frac{d(d-1)}{2}}.
  \end{align}
\end{proof}

\bibliographystyle{siamplain}
\bibliography{biblio}

\end{document}